\documentclass[11pt,reqno,bold]{paper}
%\documentclass[11pt,reqno,draft]{svjour3}
%\pdfoutput=1
\usepackage{geometry}
\geometry{letterpaper}
\usepackage{amsmath}
\usepackage{amssymb}
\usepackage{amsfonts}
\usepackage{mathrsfs}
\usepackage{asymptote}
\usepackage{caption}
\usepackage{subcaption}

\usepackage{todonotes}

\usepackage[round]{natbib}
\usepackage[algoruled,noline]{algorithm2e}
\usepackage{enumerate}

\usepackage{amsthm}

\newtheorem{theorem}{Theorem}[section]

\newtheorem{lemma}[theorem]{Lemma}
\newtheorem{corollary}[theorem]{Corollary}
\theoremstyle{definition}
\newtheorem{exampl}[theorem]{Example}
\theoremstyle{remark}
\newtheorem{remark}[theorem]{Remark}

\usepackage{color}
\definecolor{darkblue}{rgb}{0.0,0.0,0.4}
\usepackage[pdfauthor={Tsogtgerel Gantumur},pdftitle={Convergence rates of adaptive methods},pdfsubject={adaptive methods, convergence rates, approximation spaces, Besov spaces, multilevel approximation},colorlinks,citecolor=darkblue,linkcolor=darkblue,urlcolor=darkblue]{hyperref}
%\usepackage{hyperref}

%%%%%%%%%%%%%%%%%%%%%%%%%%%%%%%%%%%%%%%%%%%%%%%%
\newcommand{\N}{\mathbb{N}}
\newcommand{\Z}{\mathbb{Z}}
\newcommand{\R}{\mathbb{R}}
\newcommand{\Pol}{\mathbb{P}}
\newcommand{\tstA}{\mathscr{A}}
\newcommand{\tstP}{\mathscr{P}}
\newcommand{\tstD}{\mathscr{D}}

\newcommand{\eps}{\varepsilon}

\newcommand{\diam}{\mathrm{diam}}
\newcommand{\osc}{\mathrm{osc}}

\newcommand{\exd}{\mathrm{d}}
\newcommand{\supp}{\mathrm{supp}}

\newcommand{\refine}{\mathrm{refine}}

\newcommand{\Leb}[2][]{L_{#2}^{#1}}
\newcommand{\leb}[2][]{\ell_{#2}^{#1}}

\title{Convergence rates of adaptive methods, Besov spaces, and multilevel approximation}
\author{Tsogtgerel Gantumur}
\institution{McGill University}
\date{\today}                                           % Activate to display a given date or no date

\begin{document}

%%%%%%%%%%%%%%%%%%%%%%%%%%%%%%%%%%%%%%%%%%%%%%%%%%%%%%%%%%%%%%%%%%%%%%%%%%%%%%
\maketitle

%%%%%%%%%%%%%%%%%%%%%%%%%%%%%%%%%%%%%%%%%%%%%%%%%%%%%%%%%%%%%%%%%%%%%%%%%%%%%%
\begin{abstract}
This paper concerns characterizations of approximation classes associated to adaptive finite element methods with isotropic $h$-refinements.
It is known from the seminal work of Binev, Dahmen, DeVore and Petrushev that such classes are related to Besov spaces.
The range of parameters for which the inverse embedding results hold is rather limited,
and recently, Gaspoz and Morin have shown, among other things, that this limitation disappears if we replace Besov spaces by suitable approximation spaces
associated to finite element approximation from uniformly refined triangulations.
We call the latter spaces {\em multievel approximation spaces}, 
and argue that these spaces are placed naturally halfway between adaptive approximation classes and Besov spaces,
in the sense that it is more natural to relate multilevel approximation spaces with either Besov spaces or adaptive approximation classes,
than to go directly from adaptive approximation classes to Besov spaces.
In particular, we prove embeddings of multilevel approximation spaces into adaptive approximation classes,
complementing the inverse embedding theorems of Gaspoz and Morin.

Furthermore, in the present paper,
we initiate a theoretical study of adaptive approximation classes that are defined using a modified notion of error, the so-called {\em total error}, which is the energy error plus an oscillation term. Such approximation classes have recently been shown to arise naturally in the analysis of adaptive algorithms.
We first develop a sufficiently general approximation theory framework to handle such modifications, and then apply the abstract theory to second order elliptic problems discretized by Lagrange finite elements, resulting in characterizations of modified approximation classes in terms of memberships of the problem solution and data into certain approximation spaces,
which are in turn related to Besov spaces.
Finally, it should be noted that throughout the paper we paid equal attention to both conforming and nonconforming triangulations.
\end{abstract}

\tableofcontents

%%%%%%%%%%%%%%%%%%%%%%%%%%%%%%%%%%%%%%%%%%%%%%%%%%%%%%%%%%%%%%%%%%%%%%%%%%%%%%
\section{Introduction}

Among the most important achievements in theoretical numerical analysis during the last decade was
the development of mathematical techniques for analyzing the performance of adaptive finite element methods.
A crucial notion in this theory is that of {\em approximation classes}, which we discuss here in a simple but very paradigmatic setting.
Given a polygonal domain $\Omega\in\R^2$, a conforming triangulation $P_0$ of $\Omega$, and a number $s>0$, 
we say that a function $u$ on $\Omega$ belongs to the approximation class $\tstA^s$
if for each $N$, there is a conforming triangulation $P$ of $\Omega$ with at most $N$ triangles,
such that $P$ is obtained by a sequence of newest vertex bisections from $P_0$,
and that $u$ can be approximated by a continuous piecewise affine function subordinate to $P$ 
with the error bounded by $cN^{-s}$, where $c=c(u,P_0,s)\geq0$ is a constant independent of $N$.
In a typical setting, the error is measured in the $H^1$-norm, which is the natural energy norm for second order elliptic problems.
To reiterate and to remove any ambiguities, we say that $u\in H^1(\Omega)$ belongs to $\tstA^s$ if 
\begin{equation}\label{e:approx-class-std-intro}
\min_{\{P\in\tstP:\#P\leq N\}} \inf_{v\in S_P} \|u-v\|_{H^1} \leq c N^{-s},
\end{equation}
for all $N\geq\#P_0$ and for some constant $c$,
where $\tstP$ is the set of conforming triangulations of $\Omega$ that are obtained by a sequence of newest vertex bisections from $P_0$,
and $S_P$ is the space of continuous piecewise affine functions subordinate to the triangulation $P$.

Approximation classes can be used to reveal a theoretical barrier on any procedure that is designed to approximate $u$
by means of piecewise polynomials and a fixed refinement rule such as the newest vertex bisection. %the intrinsic complexity of the function 
Suppose that we start with the initial triangulation $P_0$, and generate a sequence of conforming triangulations by using newest vertex bisections.
Suppose also that we are trying to capture the function $u$ by using continuous piecewise linear functions subordinate to the generated triangulations.
Finally, assume that $u\in\tstA^s$ but $u\not\in\tstA^\sigma$ for any $\sigma>s$.
Then as far as the exponent $\sigma$ in $cN^{-\sigma}$ is concerned, 
it is obvious that the best asymptotic bound on the error we can hope for is $cN^{-s}$, where $N$ is the number of triangles.
Now supposing that $u$ is given as the solution of a boundary value problem,
a natural question is if this convergence rate can be achieved by any practical algorithm,
and it was answered in the seminal works of \cite*{BDD04} and \cite*{Stev07}: These papers established that the convergence rates of certain adaptive finite element methods
are optimal, in the sense that if $u\in\tstA^s$ for some $s>0$, then the method converges with the rate not slower than $s$.
One must mention the earlier developments \cite*{Dorf96,MNS00,CDD01,GHS07}, which paved the way for the final achievement.

Having established that the smallest approximation class $\tstA^s$ in which the solution $u$ belongs to essentially determines how fast 
adaptive finite element methods converge, the next issue is to determine how large these classes are 
and if the solution to a typical boundary value problem would belong to an $\tstA^s$ with large $s$.
In particular, one wants to compare the performance of adaptive methods with that of non-adaptive ones.
A first step towards addressing this issue is to characterize the approximation classes in terms of classical smoothness spaces,
%which is a central problem in approximation theory.
and the main work in this direction so far appeared is \cite*{BDDP02}, which, upon tailoring to our situation and a slight simplification,
tells that $B^{\alpha}_{p,p}\subset\tstA^s\subset B^{\sigma}_{p,p}$
for $\frac2p=\sigma<1+\frac1p$ and $\sigma<\alpha<\max\{2,1+\frac1p\}$ with $s=\frac{\alpha-1}2$.
Here $B^\alpha_{p,q}$ are the standard Besov spaces defined on $\Omega$.
This result has recently been generalized to higher order Lagrange finite elements by \cite{GM13}.
In particular, they show that the direct embedding $B^{\alpha}_{p,p}\subset\tstA^s$ holds in the larger range $\sigma<\alpha<m+\max\{1,\frac1p\}$,
where $m$ is the polynomial degree of the finite element space, see Figure \ref{f:direct-intro}(a).
However, the restriction $\sigma<1+\frac1p$ on the inverse embedding $\tstA^s\subset B^{\sigma}_{p,p}$ cannot be removed,
since for instance, any finite element function whose derivative is discontinuous cannot be in $B^{\sigma}_{p,p}$ if $\sigma\geq1+\frac1p$ and $p<\infty$.
To get around this problem, Gaspoz and Morin proposed to replace the Besov space $B^{\sigma}_{p,p}$ by the approximation space $A^\sigma_{p,p}$
associated to uniform refinements\footnote{This space is denoted by $\hat B^\sigma_{p,p}$ in \cite{GM13}. In the present paper we are adopting the notation of \cite{Osw94}.}.
We call the spaces $A^\sigma_{p,p}$ {\em multilevel approximation spaces}, and their definition will be given in Subsection \ref{ss:multilevel}.
For the purposes of this introduction, and roughly speaking, the space $A^\sigma_{p,p}$ is the collection of functions $u\in\Leb{p}$ for which
\begin{equation}
\inf_{v\in S_{P_k}} \|u-v\|_{\Leb{p}} \leq c h_k^{\sigma},
\end{equation}
where $\{P_k\}\subset\tstP$ is a sequence of triangulations such that $P_{k+1}$ is the uniform refinement of $P_k$,
and $h_k$ is the diameter of a typical triangle in $P_k$.
Note for instance that finite element functions are in every $A^\sigma_{p,p}$.
With the multilevel approximation spaces at hand, the inverse embedding $\tstA^s\subset A^{\sigma}_{p,p}$ is recovered for all $\sigma\leq\frac2p$.

In this paper, we prove the direct embedding $A^{\alpha}_{p,p}\subset\tstA^s$, %as well as some generalizations and variations of it,
so that the existing situation $B^{\alpha}_{p,p}\subset\tstA^s\subset A^{\sigma}_{p,p}$ is improved to $A^{\alpha}_{p,p}\subset\tstA^s\subset A^{\sigma}_{p,p}$.
It is a genuine improvement, since $A^{\alpha}_{p,p}(\Omega)\supsetneq B^{\alpha}_{p,p}(\Omega)$ for $\alpha\geq1+\frac1p$.
Moreover, as one stays entirely within an approximation theory framework, one can argue that the link between $\tstA^s$ and $A^{\alpha}_{p,p}$ is more natural
than the link between $\tstA^s$ and $B^{\alpha}_{p,p}$.
Once the link between $\tstA^s$ and $A^{\alpha}_{p,p}$ has been established, one can then invoke the well known relationships between $A^{\alpha}_{p,p}$ and $B^{\alpha}_{p,p}$.
It seems that this two step process offers more insight into the underlying phenomenon.
We also remark that while the existing results are only for the newest vertex bisection procedure and conforming triangulations, 
we deal with possibly nonconforming triangulations, and therefore are able to handle the red refinement procedure, 
as well as newest vertex bisections without the conformity requirement.

\begin{figure}[ht]
\centering
\begin{subfigure}{0.45\textwidth}
\includegraphics[width=0.8\textwidth]{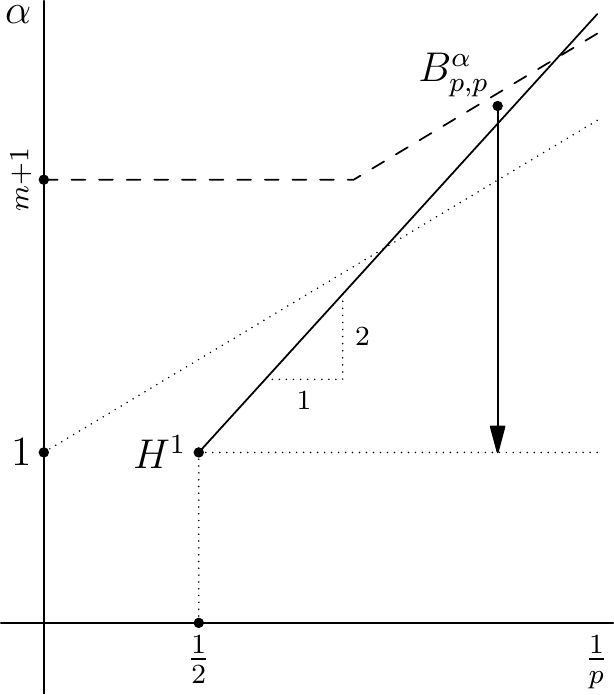}
\subcaption{If the space $B^\alpha_{p,p}$ is located strictly above the solid line and below the dashed line, 
then $B^\alpha_{p,p}\subset\tstA^s$ with $s=\frac{\alpha-1}2$.
The inverse embeddings $\tstA^{s+\eps}\subset B^\alpha_{p,p}$ hold on the solid line and below the (slanted) dotted line.}
\end{subfigure}
\qquad
\begin{subfigure}{0.45\textwidth}
\includegraphics[width=0.8\textwidth]{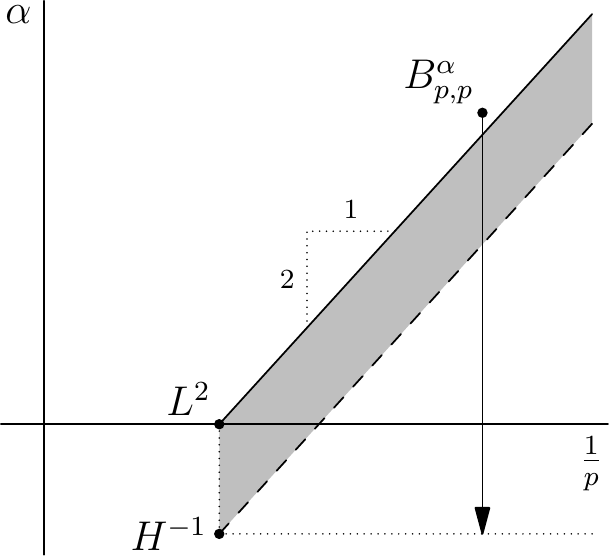}
\subcaption{If the space $B^\alpha_{p,p}$ is located above or on the solid line,
and if $u\in\tstA^s$ and $\Delta u \in B^\alpha_{p,p}$ with $s=\frac{\alpha+1}2$,
then $u\in \tstA^s_*$.
It is as if the approximation of $\Delta u$ is taking place in $H^{-1}$, 
with the proviso that the shaded area is excluded from all considerations.}
\end{subfigure}
\caption{Illustration of various embeddings.
The point $(\frac1p,\alpha)$ represents the space $B^\alpha_{p,p}$.}
\label{f:direct-intro}
\end{figure}

The approximation classes $\tstA^s$ defined by \eqref{e:approx-class-std-intro} are associated to measuring the error of an approximation in the $H^1$-norm.
Of course, this can be generalized to other function space norms, such as $\Leb{p}$ and $B^\alpha_{p,p}$, which we will consider in Section \ref{s:Lagrange}.
However, we will not stop there, and consider more general approximation classes corresponding to ways of measuring the error between a general function $u$ and a discrete function $v\in S_P$
by a quantity $\rho(u,v,P)$ that may depend on the triangulation $P$ and is required to make sense merely for discrete functions $v\in S_P$. %not by a norm as in $\|u-v\|$
An example of such an error measure is %the so-called {\em total error}
\begin{equation}\label{e:total-error-intro}
\rho(u,v,P)
= \left( \|u-v\|_{H^1}^2 
+ \sum_{\tau\in P} (\diam\,\tau)^{2}\|f-\Pi_\tau f\|_{\Leb{2}(\tau)}^2 \right)^{\frac12} ,
\end{equation}
where $f=\Delta u$,
and $\Pi_\tau:\Leb{2}(\tau)\to\Pol_{d}$ is the $\Leb{2}(\tau)$-orthogonal projection onto $\Pol_{d}$,
the space of polynomials of degree not exceeding $d$.
It has been shown in \cite*{CKNS08} that if the solution $u$ of the boundary value problem
\begin{equation}\label{e:bvp-intro}
\Delta u = f
\quad\textrm{in}\,\,\Omega\qquad\textrm{and}\qquad
u|_\Omega=0 ,
\end{equation}
satisfies 
\begin{equation}\label{e:approx-class-gen-intro}
\min_{\{P\in\tstP:\#P\leq N\}} \inf_{v\in S_P} \rho(u,v,P) \leq c N^{-s},
\end{equation}
for all $N\geq\#P_0$ and for some constants $c$ and $s>0$,
then a typical adaptive finite element method for solving \eqref{e:bvp-intro} converges with the rate not slower than $s$.
Moreover, there are good reasons to consider that the approximation classes $\tstA^s_*$ defined by the condition \eqref{e:approx-class-gen-intro}
are more attuned to certain practical adaptive finite element methods than the standard approximation classes $\tstA^s$ defined by \eqref{e:approx-class-std-intro}, see Section \ref{s:2nd-order}.
Obviously, we have $\tstA^s_*\subset\tstA^s$ but we cannot expect the inclusion $\tstA^s\subset\tstA^s_*$ to hold in general.
In \cite{CKNS08}, an effective characterization of $\tstA^s_*$ was announced as an important pending issue.

In the present paper, we establish a characterization of $\tstA^s_*$ in terms of memberships of $u$ and $f=\Delta u$ into 
suitable approximation spaces, which in turn are related to Besov spaces.
For instance, we show that if $u\in\tstA^s$ and $f\in B^\alpha_{p,p}$ with $\frac\alpha2\geq\frac1p-\frac12$ and $s=\frac{\alpha+1}2$, then $u\in\tstA^s_*$, see Figure \ref{f:direct-intro}(b).
Note that the approximation rate $s=\frac{\alpha+1}2$ is as if we were approximating $f$ in the $H^{-1}$-norm, which is illustrated by the arrow downwards.
However, the parameters must satisfy $\frac\alpha2\geq\frac1p-\frac12$ (above or on the solid line), which is more restrictive compared to $\frac{\alpha+1}2>\frac1p-\frac12$ (above the dashed line),
the latter being the condition we would expect if the approximation was indeed taking place in $H^{-1}$.
This situation cannot be improved in the sense that if $\frac\alpha2<\frac1p-\frac12$ then $B^\alpha_{p,p}\not\subset \Leb{2}$, 
hence the quantity \eqref{e:total-error-intro} would be infinite in general for $f\in B^\alpha_{p,p}$.

At this point, the reader might be wondering if we can deduce $f\in B^\alpha_{p,p}$ from $u\in\tstA^s$.
If so, everything would follow from the single assumption $u\in\tstA^s$,
which would then render the theory more in line with the traditional results.
However, $u$ can be in a space slightly smaller than $H^1$, and in this case we cannot guarantee $f\in L^2$, and hence the quantity \eqref{e:total-error-intro} would not be defined.
On the other hand, there are standard examples where $f$ is smooth but $u$ is barely in $H^{1+\eps}$ for a small $\eps$.
Furthermore, since we are solving a PDE, and $f$ is ``given'', there would generally be much more information available about $f$ than about $u$, 
and so it is not an urgent matter to deduce the regularity of $f$ from that of $u$.

The results of Section \ref{s:2nd-order} and some of the results of Section \ref{s:Lagrange} are proved by invoking abstract theorems that are established in Section \ref{s:general}.
These theorems extend some of the standard results from approximation theory to deal with generalized approximation classes such as $\tstA^s_*$.
We decided to consider a fairly general setting in the hope that the theorems will be used for establishing characterizations of other approximation classes.
For example, adaptive boundary element methods and adaptive approximation in finite element exterior calculus seem to be amenable to our abstract framework,
although checking the details poses some technical challenges.

This paper is organized as follows.
In Section \ref{s:general}, we introduce an abstract framework that is more general than usually considered in approximation theory of finite element methods,
and collect some theorems that can be used to prove embedding theorems between adaptive approximation classes and other function spaces.
In Section \ref{s:Lagrange}, we recall some standard results on multilevel approximation spaces and their relationships with Besov spaces,
and then prove direct embedding theorems between multilevel approximation spaces and adaptive approximation classes.
The main results of this section are Theorem \ref{t:direct-Lp}, Theorem \ref{t:direct-App}, and Theorem \ref{t:direct-Lp-disc}.
Finally, in Section \ref{s:2nd-order}, 
we investigate approximation classes associated to certain adaptive finite element methods for variable coefficient second order boundary value problems.
We emphasize that the actual results in Section \ref{s:2nd-order} are in terms of the approximation spaces that are studied in Section \ref{s:Lagrange},
and in order to relate them to Besov spaces, one has to appeal to Section \ref{s:Lagrange}.

%%%%%%%%%%%%%%%%%%%%%%%%%%%%%%%%%%%%%%%%%%%%%%%%%%%%%%%%%%%%%%%%%%%%%%%%%%%%%%
\section{General theorems}
\label{s:general}

%%%%%%%%%%%%%%%%%%%%%%%%%%%%%%%%%%%%%%%%%%%%%%%%%%%%%%%%%%%%%%%%%%%%%%%%%%%%%%
\subsection{The setup}
\label{ss:setup}

Let $M$ be an $n$-dimensional topological manifold, equipped with a compatible measure, in the sense that all Borel sets are measurable.
What we have in mind here is $M=\R^n$ with the Lebesgue measure on it,
or a piecewise smooth surface $M\subset\R^N$ with its canonical Hausdorff measure.
With $\Omega\subset M$ a bounded domain,
we consider a class of partitions (triangulations) of $\Omega$, and finite element type functions defined over those partitions.
Ultimately, we are interested in characterizing those functions on $\Omega$ that can be well approximated by such finite element type functions.
In order to make these concepts precise, we will use in this section a fairly abstract setting, 
which we believe to be a good compromise between generality and readability.

By a {\em partition} of $\Omega$ we understand a collection $P$ of finitely many disjoint open subsets of $\Omega$, 
satisfying $\overline\Omega = \bigcup_{\tau\in P}\overline\tau$.
We assume that a set $\tstP$ of partitions of $\Omega$ is given, which we call the set of {\em admissible partitions}.
For simplicity, we will assume that for any $k\in\N$ the set $\{P\in\tstP:\#P\leq k\}$ is finite.
In practice, $\tstP$ would be, for instance, the set of all {\em conforming} triangulations obtained from a fixed initial triangulation $P_0$ 
by repeated applications of the newest vertex bisection procedure.
Another class of important examples arises when we want to allow partitions with hanging nodes.
In this case, an admissibility criterion on a partition has been discussed in \cite{BN10}.
Here and in the following, we often write triangles and edges et cetera to mean $n$-simplices and $(n-1)$-dimensional faces et cetera,
which seems to improve readability.
Hence note that the use of a two dimensional language does {\em not} mean that the results we discuss are valid only in two dimensions.

We will assume the existence of a {\em refinement procedure} satisfying certain requirements.
Given a partition $P\in\tstP$ and a set $R\subset P$ of its elements,
the refinement procedure produces $P'\in\tstP$,
such that $P\setminus P'\supset R$, i.e., the elements in $R$ are refined at least once.
Let us denote it by $P'=\refine(P,R)$.
In practice, this is implemented by a usual naive refinement possibly producing a non-admissible partition,
followed by a so-called {\em completion} procedure.
We assume the existence of a constant $\lambda>1$ such that $|\tau|\leq\lambda^{-n}|\sigma|$ for 
all $\tau\in P'$ and $\sigma\in R$ with $\tau\cap\sigma\neq\varnothing$.
Note that we have $\lambda=2$ for red refinements, and $\lambda=\sqrt[n]2$ for the newest vertex bisection.
Moreover, we assume the following on the efficiency of the refinement procedure:
If $\{P_k\}\subset\tstP$ and $\{R_k\}$ are sequences such that $P_{k+1}=\refine(P_k,R_k)$
and $R_k\subset P_k$ for $k=0,1,\ldots$,
then
\begin{equation}\label{e:complete}
\#P_k-\#P_0\lesssim \sum_{m=0}^{k-1} \#R_m,
\qquad k=1,2,\ldots.
\end{equation}
Here and in what follows, we shall often dispense with giving explicit names to constants, and use the Vinogradov-style notation $X\lesssim Y$, 
which means that $X\leq C\cdot Y$ with some constant $C$ that is allowed to
depend only on $\tstP$ and (the geometry of) the domain $\Omega$.
%We also use the notation $X\eqsim Y$ to mean both $X\lesssim Y$ and $Y\lesssim X$.
%Moreover, even when we give names to constants, we will not explicitly mention that the constants may depend on $\tstP$ and $\Omega$, and this dependence will always be implicitly assumed.
Assumption \eqref{e:complete} is justified for newest vertex bisection algorithm in
\cite{BDD04,Stev08},
and the red refinement rule is treated in \cite{BN10}.

Next, we shall introduce an abstraction of finite element spaces.
To this end, we assume that there is a quasi-Banach space $X_0$,
and for each $P\in\tstP$, there is a nontrivial, finite dimensional subspace $S_P\subset X_0$.
The space $X_0$ models the function space over $\Omega$ in which the approximation takes place,
such as $X_0=H^t(\Omega)$ and $X_0=\Leb{p}(\Omega)$.
The spaces $S_P$ are, as the reader might have guessed, models of finite element spaces, from which we approximate 
general functions in $X_0$.
Obviously, a natural notion of error between an element $u\in X_0$ and its approximation $v\in S_P$ is the quasi-norm $\|u-v\|_{X_0}$.
However, we need a bit more flexibility in how to measure such errors, and so
we suppose that there is a function $\rho(u,v,P)\in[0,\infty]$ defined for $u\in X_0$, $v\in S_P$, and $P\in\tstP$.
Note that this error measure, which we call a {\em distance function}, can depend on the partition $P$, and it is only required to make sense for functions $v\in S_P$.
We allow the value $\rho=\infty$ to leave open the possibility that for some $u\in X_0$ we have $\rho(u,\cdot,\cdot)=\infty$.
The most important distance function is still $\rho(u,v,P)=\|u-v\|_{X_0}$, but other examples will appear later in the paper,
see e.g., Example \ref{eg:Poisson}, Subsection \ref{ss:disc} and Section \ref{s:2nd-order}.

Given $u\in X_0$ and $P\in\tstP$, we let
\begin{equation}\label{e:E-lin}
E(u,S_P)_\rho = \inf_{v\in S_P} \rho(u,v,P),
\end{equation}
which is the error of a best approximation of $u$ from $S_P$.
%We assume for simplicity that the infimum is achieved at some $v\in S_P$.
%(If $\rho(u,\cdot,P)\equiv\infty$ then simply take an arbitrary element $v\in S_P$).
Furthermore, we introduce
\begin{equation}\label{e:E-nonlin}
E_k(u)_\rho = \inf_{\{P\in\tstP:\#P\leq2^kN\}} E(u,S_P)_\rho,
\end{equation}
for $u\in X_0$ and $k\in\N$, with the constant $N$ chosen sufficiently large in order to ensure that the set $\{P\in\tstP:\#P\leq2N\}$ is nonempty.
In a certain sense, $E_k(u)_\rho$ is the best approximation error when one tries to approximate $u$ within the budget of $2^kN$ triangles.
Finally, we define the main object of our study, the {\em (adaptive) approximation class}
\begin{equation}\label{e:A-rho-def}
\tstA^s_q(\rho) = \tstA^s_q(\rho,\tstP,\{S_P\}) = \{ u\in X_0 : |u|_{\tstA^s_q(\rho)}<\infty \} ,
\end{equation}
where $s>0$ and $0<q\leq\infty$ are parameters, and
\begin{equation}\label{e:apps-inf}
|u|_{\tstA^s_q(\rho)} = \|(2^{ks}E_k(u)_\rho)_{k\in\N}\|_{\leb{q}} ,
\qquad u\in X_0.
\end{equation}
In the following, we will use the abbreviation $\tstA^s(\rho)=\tstA^s_\infty(\rho)$.
Note that $u\in\tstA^s_q(\rho)$ implies $E_k(u)_\rho\leq c2^{-ks}$ for all $k$ and for some constant $c$, and these two conditions are equivalent if $q=\infty$.
We have $\tstA^s_q(\rho)\subset\tstA^s_r(\rho)$ for $q\leq r$,
and $\tstA^s_q(\rho)\subset\tstA^\alpha_r(\rho)$ for $s>\alpha$ and for any $0<q,r\leq\infty$.
The set $\tstA^s_q(\rho)$ is not a linear space without further assumptions on $\rho$ and $\tstP$.
However, in a typical situation, it is indeed a vector space equipped with the quasi-norm $\|\cdot\|_{\tstA^s_q(\rho)} = \|\cdot\|_{X_0} + |\cdot|_{\tstA^s_q(\rho)}$.
%We refer to Remark \ref{r:quasi-norm} below for precise conditions.

\begin{remark}\label{r:quasi-norm}
Suppose that $\rho$ satisfies
\begin{itemize}
\item $\rho(\alpha u, \alpha v,P)=|\alpha|\rho(u, v,P)$ for $\alpha\in\R$, and
\item $\rho(u+u',v+v',P)\lesssim \rho(u,v,P) + \rho(u',v',P)$.
\end{itemize}
Then $|\cdot|_{\tstA^s_q(\rho)}$ is a {\em quasi-seminorm}, in the sense that it is positive homogeneous and satisfies the generalized triangle inequality.
Moreover, $\tstA^s_q(\rho)$ is a quasi-normed vector space.
If only the second condition holds, then $\tstA^s_q(\rho)$ would be a quasi-normed abelian group, in the sense of \cite{BL76}.
Even though we will not use this fact, it is worth noting that $\rho$ has the aforementioned properties for all applications we have in mind.
\end{remark}

We call the approximation classes associated to $\rho(u,v,\cdot)=\|u-v\|_{X_0}$ 
{\em standard approximation classes}, and write $\tstA^s_q(X_0)\equiv\tstA^s_q(\rho)$ and $\tstA^s(X_0)\equiv\tstA^s(\rho)$.
These standard spaces are within the scope of the general theory of approximation spaces, cf., \cite{Piet81,DL93}.
However, to treat the more general spaces $\tstA^s(\rho)$, the standard theory needs to be reworked, which is the aim of this section.

We want to characterize $\tstA^s(\rho)$ in terms of an auxiliary quasi-Banach space $X\hookrightarrow X_0$.
The main examples to keep in mind are $X_0=L^{p}$ and $X=B^{\alpha}_{q,q}$, with $\frac\alpha{n} > \frac1q - \frac1p$.
We assume that the space $X$ has the following local structure:
There exist a constant $0<q<\infty$, and a function $|\cdot|_{X(G)}:X\to\R^+$ associated to each open set $G\subset\Omega$, 
such that
\begin{equation}%\label{e:loc-X-lower}
\sum_{k} |u|_{X(\tau_k)}^q \lesssim \|u\|_{X}^q
\qquad (u\in X),
\end{equation}
for any finite sequence $\{\tau_k\}\subset P$ of non-overlapping elements taken from any $P\in\tstP$.
Finally, for any $\tau\in P$ with $P\in\tstP$, we let $\hat\tau\subset\Omega$ be a domain containing $\tau$,
which will, in a typical situation, be the union of elements of $P$ surrounding $\tau$.
We express the dependence of $\hat\tau$ on $P$ as $\hat\tau=P(\tau)$.
Then as an extension of the above sub-additivity property, we assume that 
\begin{equation}\label{e:loc-X-lower-overlap}
\sum_{k} |u|_{X(P_k(\tau_k))}^q \lesssim \|u\|_{X}^q
\qquad (u\in X),
\end{equation}
for any finite sequences $\{P_k\}\subset\tstP$ and $\{\tau_k\}$, with $\tau_k\in P_k$ and $\{\tau_k\}$ non-overlapping.
A trivial example of such a structure is $X=\Leb{q}(\Omega)$ with $|\cdot|_{X(G)}=\|\cdot\|_{\Leb{q}(G)}$.
Here the sub-additivity \eqref{e:loc-X-lower-overlap} can be guaranteed if the underlying triangulations satisfy a certain local finiteness property.

%%%%%%%%%%%%%%%%%%%%%%%%%%%%%%%%%%%%%%%%%%%%%%%%%%%%%%%%%%%%%%%%%%%%%%%%%%%%%%
\subsection{Direct embeddings for standard approximation classes}
\label{ss:direct-standard}

The following theorem shows that the inclusion $X\subset\tstA^s(\rho)$ can be proved by exhibiting a direct estimate.
A direct application of this criterion is mainly useful for deriving embeddings of the form $X\subset\tstA^s(X_0)$.
In the next subsection, it will be generalized to a criterion that is valid in a more complex situations.

\begin{theorem}\label{t:direct-std}
Let $0< p\leq\infty$ and let $\delta>0$.
Assume \eqref{e:complete} on the complexity of completion, 
and assume \eqref{e:loc-X-lower-overlap} on the local structure of $X$.
Then for any $k\in\N$ sufficiently large there exists a partition $P\in\tstP$ with $\#P\leq k$ satisfying
\begin{equation}
\left( \sum_{\tau\in P} |\tau|^{p\delta} |u|_{X(\hat\tau)}^p \right)^{\frac1p} \lesssim k^{-s} \|u\|_{X} ,
\end{equation}
with $s=\delta + \frac1q - \frac1p>0$, 
where $\hat\tau=P(\tau)$ is as in \eqref{e:loc-X-lower-overlap}, 
and the case $p=\infty$ must be interpreted in the usual way (with a maximum replacing the discrete $p$-norm).
In particular, if $u\in X$ satisfies
\begin{equation}\label{e:rho-bound}
E(u,S_P)_\rho \lesssim \left( \sum_{\tau\in P} |\tau|^{p\delta} |u|_{X(\hat\tau)}^p \right)^{\frac1p} ,
\end{equation}
for all $P\in\tstP$,
then we have $u\in\tstA^s(\rho)$ with $|u|_{\tstA^s(\rho)}\lesssim \|u\|_X$.
\end{theorem}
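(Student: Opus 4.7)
The plan is to construct, for each sufficiently large $k$, a partition $P \in \tstP$ with $\#P \leq k$ realising the claimed estimate, using a threshold algorithm driven by the element indicator $\eta_\tau := |\tau|^{\delta}|u|_{X(\hat\tau)}$. Given $\eps > 0$, I start from $P_0$ and iteratively apply $\refine$ to the collection of all $\tau \in P$ with $\eta_\tau > \eps$; the process terminates because the indicator decays with refinement depth, and I denote the output by $P(\eps)$. By construction every $\tau \in P(\eps)$ obeys $\eta_\tau \leq \eps$, which will drive the error estimate. Writing $\mathcal{M}$ for the union of the marked sets across all iterations, assumption \eqref{e:complete} gives $\#P(\eps) \lesssim \#P_0 + \#\mathcal{M}$, so the task reduces to a sharp bound on $\#\mathcal{M}$ in terms of $\eps$ and $\|u\|_X$.

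I partition $\mathcal{M}$ by refinement depth, writing $\mathcal{M}_j$ for the marked elements at depth $j$ in the universal refinement tree rooted at $P_0$. Two elements at the same depth are either equal or have disjoint interiors, so $\mathcal{M}_j$ is a non-overlapping family contained in a single admissible partition; since every marked $\tau$ satisfies $|u|_{X(\hat\tau)}^q > \eps^q |\tau|^{-\delta q} \gtrsim \eps^q \lambda^{nj\delta q}$, the local summability \eqref{e:loc-X-lower-overlap} yields
\begin{equation*}
\#\mathcal{M}_j \lesssim (\|u\|_X/\eps)^q\,\lambda^{-nj\delta q}.
\end{equation*}
At the same time, the depth-$j$ layer of the refinement tree contains at most $\lambda^{nj}\#P_0$ elements, so trivially $\#\mathcal{M}_j \leq \lambda^{nj}\#P_0$. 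Taking the minimum of these two estimates and summing the two geometric tails on either side of the crossover level $j^\ast$ where they agree, I obtain
\begin{equation*}
\#\mathcal{M} \lesssim (\|u\|_X/\eps)^{q/(1+\delta q)},
\end{equation*}
with the $\#P_0$- and $|\Omega|$-dependence absorbed into the implicit constants.

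Given $k$, I now choose $\eps$ of the order $\|u\|_X k^{-(\delta + 1/q)}$, which forces $\#P(\eps) \leq k$. Since $\eta_\tau \leq \eps$ on $P(\eps)$, a pointwise bound yields $(\sum_\tau \eta_\tau^p)^{1/p} \leq \eps\,\#P(\eps)^{1/p}$ (interpreted as $\max_\tau \eta_\tau \leq \eps$ when $p=\infty$), and the above choice of $\eps$ turns this into $\lesssim \|u\|_X k^{-s}$ with $s = \delta + 1/q - 1/p$, establishing the first part of the theorem. The second part is then immediate: assuming \eqref{e:rho-bound}, applying the first part with $k$ replaced by $2^\ell N$ gives $E_\ell(u)_\rho \lesssim \|u\|_X 2^{-\ell s}$, whence $|u|_{\tstA^s(\rho)} \lesssim \|u\|_X$. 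The principal technical obstacle is the sharp cardinality bound on $\#\mathcal{M}$: the analytic per-level estimate alone only yields $\#\mathcal{M} \lesssim (\|u\|_X/\eps)^q$, which loses a factor of $k^{\delta}$ in the end, and it is the balance with the combinatorial bound $\#\mathcal{M}_j \leq \lambda^{nj}\#P_0$ at the crossover depth that restores the exponent $s = \delta + 1/q - 1/p$.
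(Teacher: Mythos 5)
Your overall strategy --- greedy thresholding on the indicator $\eta_\tau=|\tau|^{\delta}|u|_{X(\hat\tau)}$, followed by a cardinality bound on the union of the marked sets obtained by taking, on each dyadic bin, the minimum of a decaying ``analytic'' bound and a growing ``combinatorial'' bound and then summing the two geometric tails across the crossover --- is exactly the paper's argument (itself an abstraction of Binev--Dahmen--DeVore--Petrushev). The choice $\eps\sim\|u\|_X k^{-(\delta+1/q)}$ and the final exponent bookkeeping are also identical, as is the reduction of the second assertion to the first.

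The one step that does not survive in the abstract setting of the theorem is the combinatorial bound $\#\mathcal{M}_j\leq\lambda^{nj}\#P_0$ for your depth-$j$ layer. The hypotheses of \S\ref{ss:setup} only assert that refinement shrinks measures, $|\tau|\leq\lambda^{-n}|\sigma|$; they bound neither the number of children of an element (a parent may be split into arbitrarily many pieces, one of which barely shrinks) nor do they even provide a well-defined universal refinement tree with a depth function, since $\refine$ is a black box that may refine marked elements several times and refines additional elements during completion. Consequently the elements you place at depth $j$ have measure bounded \emph{above} by a multiple of $\lambda^{-nj}$ but not below, so neither a branching-factor count nor a volume count controls $\#\mathcal{M}_j$, and with a larger effective branching factor the crossover computation returns a strictly worse exponent than $q/(1+\delta q)$. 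The paper avoids this by binning the marked elements by \emph{measure} rather than by depth, $\Lambda_j=\{\tau:\mu^{j+1}\leq|\tau|<\mu^j\}$ with $\mu=\lambda^{-n}$: pairwise disjointness of each $\Lambda_j$ (deduced from the measure-ratio consequence of the refinement axiom) together with the lower bound $|\tau|\geq\mu^{j+1}$ built into the bin gives the volume bound $\#\Lambda_j\leq\mu^{-j-1}|\Omega|$ directly, after which your crossover argument goes through verbatim. So the proof is salvageable by switching the binning variable; as written, the combinatorial half of your two-sided estimate is unjustified under the stated assumptions (though it is of course valid for newest vertex bisection and red refinement, where the branching factor equals $\lambda^n$).
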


\begin{proof}
What follows is a slight abstraction of the proof of Proposition 5.2 in \cite{BDDP02};
we include it here for completeness.
We first deal with the case $0<p<\infty$.
Let
\begin{equation}\label{e:err-ind-0}
e(\tau,P) = |\tau|^{p\delta} |u|_{X(\hat\tau)}^p,
\end{equation}
for $\tau\in P$ and $P\in\tstP$.
Then for any given $\eps>0$, and any $P_0\in\tstP$,
below we will specify a procedure to generate a partition $P\in\tstP$ satisfying 
\begin{equation}\label{e:rho-bnd-pf}
\sum_{\tau\in\tstP} e(\tau,P) \leq c' (\#P) \eps,
\end{equation}
and 
\begin{equation}\label{e:card-P-bnd-pf}
\#P - \#P_0 \leq c \eps^{-1/(1+ps)} \|u\|_{X}^{p/(1+ps)},
\end{equation}
where $c'$ depends only on the implicit constant of \eqref{e:rho-bound},
and $c$ depends only on $|\Omega|$, $\lambda$, and the implicit constants of \eqref{e:complete}, and \eqref{e:loc-X-lower-overlap}.
Then, for any given $k>0$, by choosing 
\begin{equation}
\eps = (c/k)^{1+ps} \|u\|_{X}^p,
\end{equation}
we would be able to guarantee a partition $P\in\tstP$ satisfying
$\#P\leq\#P_0+k$ and 
\begin{equation}
\sum_{\tau\in\tstP} e(\tau,P) \lesssim k^{-sp} \|u\|_{X}^p.
\end{equation}
This would imply the lemma, as $k^{-s}$ can be replaced by $(\#P_0+k)^{-s}$ for, e.g., $k\geq\#P_0$.

Let $\eps>0$ and let $P_0\in\tstP$.
We then recursively define $R_k=\{\tau\in P_k:e(\tau,P_k)>\eps\}$ and $P_{k+1}=\refine(P_k,R_k)$ for $k=0,1,\ldots$.
For all sufficiently large $k$ we will have $R_k=\varnothing$ since $|u|_{X(\hat\tau)} \lesssim \|u\|_{X}$ by \eqref{e:loc-X-lower-overlap},
and $|\tau|$ is reduced by a constant factor $\mu=\lambda^{-n}<1$ at each refinement.
Let $P=P_k$, where $k$ marks the first occurrence of $R_k=\varnothing$.
Then recalling \eqref{e:err-ind-0}, and taking into account that $e(\tau,P_k)\leq\eps$ for $\tau\in P_k$,
we obtain \eqref{e:rho-bnd-pf}.

In order to get a bound on $\#P$, we estimate the cardinality of $R=R_0\cup R_1\cup\ldots\cup R_{k-1}$, and use \eqref{e:complete}.
Let $\Lambda_j=\{\tau\in R: \mu^{j+1}\leq|\tau|<\mu^{j}\}$ for $j\in\Z$, and let $m_j=\#\Lambda_j$.
Note that the elements of $\Lambda_j$ (for any fixed $j$) are disjoint, since if any two elements intersect,
then they must come from different $R_k$'s as each $R_k$ consists of disjoint elements,
and hence by assumption on the refinement procedure, the ratio between the measures of the two elements must lie outside $(\mu,\mu^{-1})$.
This gives the trivial bound
\begin{equation}
m_j \leq \mu^{-j-1} |\Omega|.
\end{equation}
On the other hand, we have $e(\tau,P_k)>\eps$ for $\tau\in \Lambda_j$ with some $k$, which gives
\begin{equation}\label{e:bnd-epd-pf}
\eps < |\tau|^{p\delta} |u|_{X(\hat\tau)}^p
< \mu^{jp\delta} |u|_{X(\hat\tau)}^p,
\end{equation}
where $\hat\tau$ is defined with respect to $P_k$, and $k$ may depend on $\tau$.
Summing over $\tau\in\Lambda_j$, we get
\begin{equation}
m_j\eps^{q/p} 
\leq \mu^{jq\delta} \sum_{\tau\in \Lambda_j} |u|_{X(\hat\tau)}^q
\lesssim \mu^{jq\delta} \|u\|_{X}^q,
\end{equation}
where we have used \eqref{e:loc-X-lower-overlap}.
Finally, summing for $j$, we obtain
\begin{equation}
\#R
\leq \sum_{j=-\infty}^\infty m_j
\lesssim \sum_{j=-\infty}^\infty \min\left\{ \mu^{-j}, \eps^{-q/p} \mu^{jq\delta} \|u\|_{X}^q \right\}
\lesssim \eps^{-q/(p+pq\delta)} \|u\|_{X}^{q/(1+q\delta)},
\end{equation}
which, in view of \eqref{e:complete} and $q/(1+q\delta) = p/(1+ps)$, establishes the bound \eqref{e:card-P-bnd-pf}.

We only sketch the case $p=\infty$ since the proof is essentially the same.
We use
\begin{equation}
e(\tau,P) = |\tau|^{\delta} |u|_{X(\hat\tau)},
\end{equation}
instead of \eqref{e:err-ind-0}, and run the same algorithm.
This guarantees that the resulting partition $P$ satisfies
\begin{equation}
\max_{\tau\in P} e(\tau,P) \leq \eps.
\end{equation}
We bound the cardinality of $P$ in the same way, 
which formally amounts to putting $p=1$ into \eqref{e:bnd-epd-pf} and proceeding.
The final result is
\begin{equation}
\#P - \#P_0 \leq c \eps^{-q/(1+q\delta)} \|u\|_{X}^{q/(1+q\delta)} = c \eps^{-1/s} \|u\|_{X}^{1/s},
\end{equation}
where $s = \delta+\frac1q$.
The proof is complete.
\end{proof}

\begin{exampl}\label{eg:bdd-gm}
The main argument of the preceding proof can be traced back to \cite{BirSol67}.
Recently, in \cite{BDDP02}, this argument was applied to obtain an embedding of a Besov space into 
$\tstA^s(X_0)$, i.e., the case where the distance function $\rho$ is given by $\rho(u,v,\cdot)=\|u-v\|_{X_0}$.
We want to include here one such application. 
Let $\Omega\subset\R^n$ be a bounded polyhedral domain with Lipschitz boundary, 
and take $\tstP$ to be the set of conforming triangulations of $\Omega$ obtained from a fixed conforming triangulation $P_0$ by 
means of newest vertex bisections. 
For $P\in\tstP$, let $S_P$ be the Lagrange finite element space of continuous piecewise polynomials of degree not exceeding $m$.
Thus, for instance, the piecewise linear finite elements would correspond to $m=1$.
Moreover, for $P\in\tstP$ and $\tau\in P$, let $\hat\tau=P(\tau)$ be the interior of $\bigcup \{\overline\sigma:\sigma\in P,\,\overline\sigma\cap\overline\tau\neq\varnothing\}$.
%, cf. \eqref{e:loc-X-lower-overlap}.
Finally, let us put $X_0=\Leb{p}(\Omega)$, $X=B^{\alpha}_{q,q}(\Omega)$, and $\rho(u,v,\cdot)=\|u-v\|_{\Leb{p}(\Omega)}$.
Then in this setting, the estimate \eqref{e:rho-bound} holds with the parameters $p$ and $\delta=\frac\alpha{n}+\frac1p-\frac1q$,
as long as $0<\alpha<m+ \max\{1,\frac1q\}$ and $\delta>0$,
cf. \cite{BDDP02,GM13}.
Hence the preceding lemma immediately implies that $B^{\alpha}_{q,q}(\Omega)\hookrightarrow\tstA^s(\Leb{p}(\Omega))$ with $s=\frac\alpha{n}$.
\end{exampl}

In the rest of this subsection, we want to record some results involving interpolation spaces.
For $u\in X_0$ and $t>0$, the {\em $K$-functional} is
\begin{equation}\label{e:K-functional}
K(u,t;X_0,X) = \inf_{v\in X} \left( \|u-v\|_{X_0} + t \|v\|_X \right),
\end{equation}
and for $0<\theta<1$ and $0<\gamma\leq\infty$, we define the (real) {\em interpolation space}
$(X_0,X)_{\theta,\gamma}$ as the space of functions $u\in X_0$ for which the quantity
\begin{equation}\label{e:interp-norm-disc}
|u|_{(X_0,X)_{\theta,\gamma}} = \left\| [\lambda^{\theta m} K(u,\lambda^{-m};X_0,X)]_{m\geq0} \right\|_{\leb{\gamma}} ,
%\left( \int_0^\infty [t^{-\theta} K(u,t;X_0,X)]^\tau \frac{\exd t}t \right)^{1/\tau},
\end{equation}
is finite.
These are quasi-Banach spaces with the quasi-norms $\|\cdot\|_{X_0} + |\cdot|_{(X_0,X)_{\theta,\gamma}}$.
The parameter $\lambda>1$ can be chosen at one's convenience, because the resulting quasi-norms are all pairwise equivalent.

\begin{corollary}\label{c:direct-std}
Let $0< p\leq\infty$, $\delta>0$ and let $s=\delta + \frac1q - \frac1p>0$.
Assume
\begin{equation}
E(u,S_P)_{X_0} \lesssim \left( \sum_{\tau\in P} |\tau|^{p\delta} |u|_{X(\hat\tau)}^p \right)^{\frac1p} ,
\end{equation}
for all $u\in X$ and $P\in\tstP$.
Then we have $(X_0,X)_{\alpha/s,\gamma}\subset\tstA^\alpha_\gamma(X_0)$ for $0<\alpha<s$ and $0<\gamma\leq\infty$.
\end{corollary}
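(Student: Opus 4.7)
The plan is to combine the Jackson-type direct estimate supplied by Theorem \ref{t:direct-std} with the standard discrete $K$-functional characterization of real interpolation spaces. Specifically, Theorem \ref{t:direct-std} applied with $\rho(u,v,\cdot)=\|u-v\|_{X_0}$ yields the Jackson estimate $E_k(v)_{X_0}\lesssim 2^{-ks}\|v\|_X$ for every $v\in X$ and every $k\in\N$ (since $\#P\leq 2^kN$ gives rate $(2^kN)^{-s}$, absorbing $N^{-s}$ into the implicit constant). The task is then to lift this from $X$ to the interpolation space.

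First I would fix $u\in(X_0,X)_{\alpha/s,\gamma}$ and, for each $k\in\N$, select $v_k\in X$ that nearly realizes the $K$-functional at $t_k=2^{-ks}$, i.e.,
\begin{equation}
\|u-v_k\|_{X_0} + 2^{-ks}\|v_k\|_X \leq 2K(u,2^{-ks};X_0,X).
\end{equation}
Using $v_k$ as an intermediate approximant and the Jackson estimate for $v_k\in X$, I obtain
\begin{equation}
E_k(u)_{X_0} \leq \|u-v_k\|_{X_0} + E_k(v_k)_{X_0} \lesssim \|u-v_k\|_{X_0} + 2^{-ks}\|v_k\|_X \lesssim K(u,2^{-ks};X_0,X),
\end{equation}
where the first inequality uses the triangle inequality (or its quasi-norm analogue) for the $X_0$-norm combined with the fact that any $w\in S_P$ approximating $v_k$ serves as an approximant of $u$ up to the $X_0$-error $\|u-v_k\|_{X_0}$.

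Next I would plug this pointwise bound into the definition \eqref{e:apps-inf} of the $\tstA^\alpha_\gamma(X_0)$ quasi-seminorm, obtaining
\begin{equation}
|u|_{\tstA^\alpha_\gamma(X_0)} = \bigl\|(2^{k\alpha}E_k(u)_{X_0})_{k\in\N}\bigr\|_{\leb{\gamma}} \lesssim \bigl\|(2^{k\alpha}K(u,2^{-ks};X_0,X))_{k\in\N}\bigr\|_{\leb{\gamma}}.
\end{equation}
Choosing the discretization parameter $\lambda=2^s>1$ in the definition \eqref{e:interp-norm-disc}, which is permissible since the resulting quasi-norms are pairwise equivalent, the right-hand side is precisely $|u|_{(X_0,X)_{\alpha/s,\gamma}}$ with $\theta=\alpha/s\in(0,1)$ (the condition $\alpha<s$ is needed exactly to keep $\theta<1$). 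Together with $\|u\|_{X_0}\leq\|u\|_{(X_0,X)_{\alpha/s,\gamma}}$, this yields the claimed continuous embedding.

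There is no real obstacle here beyond bookkeeping: the argument is the standard Jackson-implies-interpolation mechanism, and all its ingredients have been prepared in Theorem \ref{t:direct-std} and in the paragraph defining $K(u,t;X_0,X)$. The one point requiring a moment's care is the passage from $v_k\in X$ to an approximant of $u$ in $S_P$; this step relies on $\rho$ being the plain $X_0$-distance, so that the triangle inequality in $X_0$ lets the error $\|u-v_k\|_{X_0}$ be added without any partition-dependent coupling. This is why the corollary is stated for $\tstA^\alpha_\gamma(X_0)$ rather than for a general $\tstA^\alpha_\gamma(\rho)$.
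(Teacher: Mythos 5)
Your proposal is correct and follows essentially the same route as the paper's proof: apply Theorem \ref{t:direct-std} to get the Jackson estimate $E_k(v)_{X_0}\lesssim 2^{-ks}\|v\|_X$ for $v\in X$, combine it with the (quasi-)triangle inequality $E_k(u)_{X_0}\lesssim\|u-v\|_{X_0}+E_k(v)_{X_0}$, and recognize the infimum over $v$ as the $K$-functional at $t=2^{-ks}$, matched to the discretized interpolation norm \eqref{e:interp-norm-disc} with $\lambda=2^s$. Your use of a near-minimizer $v_k$ rather than minimizing at the end is an immaterial variation.
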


\begin{proof}
For $u\in (X_0,X)_{\alpha/s,\gamma}$ and $v\in X$, we have
\begin{equation}
E_k(u)_{X_0} \lesssim \|u-v\|_{X_0} + E_k(v)_{X_0} ,
\end{equation}
Theorem \ref{t:direct-std} yields $E_k(v)_{X_0}\lesssim 2^{-ks} \|v\|_X$, leading to
\begin{equation}
E_k(u)_{X_0} \lesssim \|u-v\|_{X_0} + 2^{-ks} \|v\|_X.
\end{equation}
After minimizing over $v\in X$,
the right hand side gives the $K$-functional $K(u,2^{-ks};X_0,X)$,
and \eqref{e:interp-norm-disc} implies that $u\in \tstA^\alpha_\gamma({X_0})$.
\end{proof}

%%%%%%%%%%%%%%%%%%%%%%%%%%%%%%%%%%%%%%%%%%%%%%%%%%%%%%%%%%%%%%%%%%%%%%%%%%%%%%
\subsection{Direct embeddings for general approximation classes}
\label{ss:direct}

As mentioned in the introduction, our study is motivated by algorithms for approximating the solution of the operator equation $Tu=f$.
Hence it should not come as a surprise that we assume the existence of a continuous operator $T:X_0\to Y_0$, with $Y_0$ a quasi-Banach space.
An example to keep in mind is the Laplace operator sending $H^1_0$ onto $H^{-1}$.
In this subsection we do not assume linearity, although all examples of $T$ we have in this paper are linear.
We also need an auxiliary quasi-Banach space $Y\hookrightarrow Y_0$,
satisfying the properties analogous to that of $X$, in particular, \eqref{e:loc-X-lower-overlap} with some $0<r<\infty$ replacing $q$ there.
If $Y_0=H^{-1}$, a typical example of $Y$ would be $B^{\sigma-1}_{r,r}$ with $\frac\sigma{n} > \frac1r - \frac12$.

It is obvious that $\tstA^s(\rho)\subset\tstA^s(X_0)$, provided we have $\|u-v\|_{X_0}\lesssim\rho(u,v,\cdot)$.
The latter condition is satisfied for all practical applications we have in mind.
We will shortly present a theorem providing a criterion for affirming embeddings such as $\tstA^s(X_0)\cap T^{-1}(Y)\subset\tstA^s(\rho)$.

Before stating the theorem, we need to introduce a bit more structure on the set $\tstP$.
The structure we need is that of \emph{overlay} of partitions:
We assume that there is an operation
$\oplus:\tstP\times\tstP\to\tstP$ satisfying
\begin{equation}\label{e:overlay}
S_{P} + S_{Q} \subset S_{P\oplus Q},
\qquad
\textrm{and}
\qquad
\#(P\oplus Q)\lesssim \#P+\#Q,
\end{equation}
for $P,Q\in\tstP$.
In addition, we will assume that
\begin{equation}\label{e:overlay-rho}
\rho(u,v,P\oplus Q)\lesssim\rho(u,v,P).
\end{equation}
In the conforming world, $P\oplus Q$ can be taken to be the smallest
and common conforming refinement of $P$ and $Q$, for which
\eqref{e:overlay} is demonstrated in \cite{Stev07}, see also \cite{CKNS08}.
The same argument works for nonconforming partitions satisfying a certain admissibility criterion, cf. \cite{BN10}.

\begin{theorem}\label{t:direct}
Let $0< p\leq\infty$, $\delta>0$, and let $s=\delta + \frac1r - \frac1p>0$.
Assume \eqref{e:complete} on the complexity of completion, 
as well as \eqref{e:loc-X-lower-overlap} on the local structure of $Y$, with $r$ replacing $q$ there.
There are no additional assumptions at this point on $\rho$, except \eqref{e:overlay-rho} and the obvious conditions we have imposed in \S\ref{ss:setup}.
Suppose that $u\in\tstA^s(X_0)\cap T^{-1}(Y)$ satisfies
\begin{equation}\label{e:rho-bound-u-Au}
E(u,S_P)_\rho \lesssim E(u,S_P)_{X_0} 
+ \left( \sum_{\tau\in P} |\tau|^{p\delta} |Tu|_{Y(\hat\tau)}^p \right)^{\frac1p} ,
\end{equation}
for all $P\in\tstP$ (in particular $E(u,\cdot)_\rho$ is always finite).
Suppose also that 
\begin{equation}\label{e:Au-red}
\left( \sum_{\tau\in P\oplus Q} |\tau|^{p\delta} |Tu|_{Y(\hat\tau)}^p \right)^{\frac1p} \lesssim \left( \sum_{\tau\in P} |\tau|^{p\delta} |Tu|_{Y(\hat\tau)}^p \right)^{\frac1p} ,
\end{equation}
for any $P,Q\in\tstP$.
Then we have $u\in\tstA^s(\rho)$ with $|u|_{\tstA^s(\rho)}\lesssim |u|_{\tstA^s(X_0)}+\|Tu\|_Y$.
\end{theorem}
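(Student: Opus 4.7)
The plan is to mimic the strategy of Theorem \ref{t:direct-std} but to produce the approximating partition as the overlay of two separate partitions: one tailored to $u$ itself (to control the $X_0$ part of the right hand side of \eqref{e:rho-bound-u-Au}), and one tailored to $Tu\in Y$ (to control the weighted sum). The point is that the hypothesis \eqref{e:rho-bound-u-Au} splits the $\rho$-error into exactly these two ingredients, so each can be handled by a machinery we already have.

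More concretely, fix $k\in\N$. First, since $u\in\tstA^s(X_0)$, the definition \eqref{e:E-nonlin} yields a partition $P_1\in\tstP$ with $\#P_1\leq 2^{k-c}N$ and
\begin{equation*}
E(u,S_{P_1})_{X_0} \lesssim 2^{-ks}|u|_{\tstA^s(X_0)} ,
\end{equation*}
for an absolute constant $c$ to be chosen at the end. Second, since $Tu\in Y$ and $Y$ has the same local structure as $X$ in Theorem \ref{t:direct-std} (with $r$ in place of $q$), that theorem, applied to $Tu\in Y$, delivers $P_2\in\tstP$ with $\#P_2\leq 2^{k-c}N$ and
\begin{equation*}
\left( \sum_{\tau\in P_2} |\tau|^{p\delta} |Tu|_{Y(\hat\tau)}^p \right)^{1/p} \lesssim 2^{-ks} \|Tu\|_Y ,
\end{equation*}
with $s=\delta+\tfrac1r-\tfrac1p$ matching the exponent in the present theorem. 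Now set $P=P_1\oplus P_2$. By \eqref{e:overlay} we have $\#P\lesssim\#P_1+\#P_2\leq 2^{k-c+1}N$, and choosing $c$ once and for all large enough gives $\#P\leq 2^kN$, so that $P$ is admissible in the infimum defining $E_k(u)_\rho$.

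It remains to estimate $E(u,S_P)_\rho$. The inclusion $S_{P_1}\subset S_P$ (the first half of \eqref{e:overlay}) gives the monotonicity $E(u,S_P)_{X_0}\leq E(u,S_{P_1})_{X_0}$, while the hypothesis \eqref{e:Au-red}, applied with $P=P_2$ and $Q=P_1$, yields
\begin{equation*}
\left( \sum_{\tau\in P} |\tau|^{p\delta} |Tu|_{Y(\hat\tau)}^p \right)^{1/p}
\lesssim
\left( \sum_{\tau\in P_2} |\tau|^{p\delta} |Tu|_{Y(\hat\tau)}^p \right)^{1/p} .
\end{equation*}
Plugging both bounds into the structural estimate \eqref{e:rho-bound-u-Au} for $P$ produces
\begin{equation*}
E(u,S_P)_\rho \lesssim 2^{-ks}\bigl(|u|_{\tstA^s(X_0)}+\|Tu\|_Y\bigr),
\end{equation*}
so that $E_k(u)_\rho\lesssim 2^{-ks}\bigl(|u|_{\tstA^s(X_0)}+\|Tu\|_Y\bigr)$ for every $k$, which is the claimed $\tstA^s(\rho)$-bound.

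The step I expect to require the most care is the one where the two independently-chosen partitions are merged: the $X_0$-side survives the overlay simply because the discrete space grows, but the $Y$-side survives only thanks to the technical hypothesis \eqref{e:Au-red}, which is essentially the right reformulation of the statement that refining $P_2$ by overlaying with $P_1$ cannot increase the weighted data-oscillation by more than a constant factor. This is the condition the user must verify in applications; without it, one could otherwise have feared that introducing $P_1$'s finer elements would create new terms $|\tau|^{p\delta}|Tu|_{Y(\hat\tau)}^p$ that blow up the sum, and the whole argument would collapse. Everything else is just bookkeeping of cardinalities against the budget $2^k N$.
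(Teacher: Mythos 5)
Your proposal is correct and follows essentially the same route as the paper's proof: construct one partition from the definition of $\tstA^s(X_0)$, a second from Theorem \ref{t:direct-std} applied to $Tu\in Y$, overlay them, and control the two terms of \eqref{e:rho-bound-u-Au} via monotonicity of $E(\cdot)_{X_0}$ and the hypothesis \eqref{e:Au-red}. Your extra bookkeeping with the constant $c$ to keep $\#P\leq 2^kN$ exactly is a harmless refinement of the paper's $\#P\lesssim 2^k$.
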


\begin{proof}
Let $k\in\N$ be an arbitrary number.
Then by definition of $\tstA^s(X_0)$, there exists a partition $P'\in\tstP$ such that
\begin{equation}
E(u,S_{P'})_{X_0} \leq 2^{-ks} |u|_{\tstA^s(X_0)},
\qquad\textrm{and}\qquad
\#P'\leq 2^kN .
\end{equation}
Similarly, by applying Theorem \ref{t:direct-std} with $Y$ in place of $X$, and with $Tu$ in place $u$,
we can generate a partition $P''\in\tstP$ such that
\begin{equation}
\left( \sum_{\tau\in P''} |\tau|^{p\delta} |Tu|_{Y(\hat\tau)}^p \right)^{\frac1p} \lesssim 2^{-ks} \|Tu\|_{Y},
\qquad\textrm{and}\qquad
\#P''\leq 2^kN .
\end{equation}
Then for $P=P'\oplus P''$ we have $\#P\lesssim2^k$ by \eqref{e:overlay}.
Moreover, \eqref{e:Au-red} together with the obvious monotonicity
\begin{equation}
E(u,S_P)_{X_0} \leq E(u,S_{P'})_{X_0},
\end{equation}
guarantee that the right hand side of \eqref{e:rho-bound-u-Au} is bounded by a multiple of $2^{-ks}(|u|_{\tstA^s(X_0)} + \|Tu\|_{Y})$,
which completes the proof.
\end{proof}

\begin{remark}
Suppose that we replace the condition \eqref{e:rho-bound-u-Au} in the statement of the preceding theorem by the new condition
\begin{equation}
E(u,S_P)_\rho \lesssim E(u,S_P)_{X_0} + E(u,S_P)_{\rho_1} 
+ \left( \sum_{\tau\in P} |\tau|^{p\delta} |Tu|_{Y(\hat\tau)}^p \right)^{\frac1p} ,
\end{equation}
where $\rho_1$ is some distance function.
Then by the same argument, we would be able to conclude that 
$u\in\tstA^s(\rho)$ with $|u|_{\tstA^s(\rho)}\lesssim |u|_{\tstA^s(X_0)}+|u|_{\tstA^s(\rho_1)}+\|Tu\|_Y$.
We will use similar strtaightforward extensions of the preceding theorem later in the paper, for instance, in the proof of Theorem \ref{t:elliptic-2nd}.
\end{remark}

\begin{exampl}\label{eg:Poisson}
We would like to illustrate the usefulness of Theorem \ref{t:direct} by sketching a simple application.
For full details, we refer to Section \ref{s:2nd-order}, as the current example is a special case of the results derived there.
We take $\Omega$ and $\tstP$ as in Example \ref{eg:bdd-gm},
and for $P\in\tstP$, let $S_P$ be the Lagrange finite element space of continuous piecewise polynomials of degree not exceeding $m$,
with the homogeneous Dirichlet boundary condition.
Moreover, we set $T=\Delta$ the Laplace operator, sending $X_0=H^1_0(\Omega)$ onto $Y_0=H^{-1}(\Omega)$.
Then in this context, it is proved in \cite{CKNS08} that certain adaptive finite element methods converge optimally
with respect to the approximation classes $\tstA^s(\rho)$, with the distance function $\rho$ given by
\begin{equation}\label{e:Poisson-rho-eg}
\rho(u,v,P)
= \left( \|u-v\|_{H^1}^2 
+ \sum_{\tau\in P} |\tau|^{2/n}\|f-\Pi_\tau f\|_{\Leb{2}(\tau)}^2 \right)^{\frac12} ,
\end{equation}
where $f=\Delta u$, and $\Pi_\tau:\Leb{2}(\tau)\to\Pol_{d}$ is the $\Leb{2}(\tau)$-orthogonal projection onto $\Pol_{d}$,
with $d\geq m-2$ fixed.
The sum involving $f-\Pi_\tau f$ is known as the {\em oscillation term}.

Let $0< r,\alpha<\infty$ satisfy $\delta = \frac{\alpha}n - \frac1r + \frac12\geq0$ and $\alpha<d+\max\{1,\frac1r\}$.
Then we claim that for each $u\in H^1_0(\Omega)$ with $\Delta u\in B^{\alpha}_{r,r}(\Omega)$, 
there exists $u_P\in S_P$ such that
\begin{equation}
\rho(u,u_P,P) \lesssim \inf_{v\in S_P} \|u-v\|_{H^1} + \left( \sum_{\tau\in P} |\tau|^{2(\delta+1/n)} |\Delta u|_{B^{\alpha}_{r,r}(\tau)}^2 \right)^{\frac12},
\end{equation}
for all $P\in\tstP$.
In light of the preceding theorem, this would imply that each function $u\in\tstA^s(H^1_0(\Omega))$ with $\Delta u\in B^{\alpha}_{r,r}(\Omega)$,
satisfies $u\in\tstA^s(\rho)$, cf. Figure \ref{f:direct-intro}(b).
Note that since we can choose $d$ at will, the restriction $\alpha<d+\max\{1,\frac1r\}$ is immaterial.

To prove the claim, we take $u_P$ to be the Scott-Zhang interpolator of $u$, preserving the Dirichlet boundary condition.
Then we have
\begin{equation}
\|u-u_P\|_{H^1} \lesssim \inf_{v\in S_P} \|u-v\|_{H^1},
\end{equation}
for all $P\in\tstP$.
The oscillation term in \eqref{e:Poisson-rho-eg} can be estimated as
\begin{equation}
\|f-\Pi_\tau f\|_{\Leb{2}(\tau)} 
\leq \|f - g\|_{\Leb{2}(\tau)} 
\lesssim 
|\tau|^{\delta} \|f-g\|_{\Leb{r}(\tau)} + |\tau|^{\delta} |f|_{B^{\alpha}_{r,r}(\tau)},
\end{equation}
for any $g\in\Pol_{d}$,
where we have used continuity of the embedding $B^{\alpha}_{r,r}(\tau)\subset \Leb{2}(\tau)$
and the fact that $|g|_{B^{\alpha}_{r,r}(\tau)}=0$ when the Besov seminorm is defined using $\omega_{d+1}$.
Furthermore, if $g$ is a best approximation of $f$ in the $\Leb{r}(\tau)$ sense,
the Whitney estimate gives
\begin{equation}
\|f-g\|_{\Leb{r}(\tau)} 
\lesssim \omega_{d+1}(f,\tau)_r
\lesssim |f|_{B^{\alpha}_{r,r}(\tau)},
\end{equation}
which yields the desired result.
In closing the example, we note that for this argument to work, 
the constants in the Whitney estimates and in the embeddings $B^{\alpha}_{r,r}(\tau)\subset \Leb{2}(\tau)$ must be uniformly bounded independently of $\tau$.
While such investigations on Whitney estimates can be found in \cite{DL04a,GM13}, it seems difficult to locate similar studies on Besov space embeddings.
To remove any doubt, the arguments in the following sections are arranged so that we do not use Besov space embeddings.
Instead, we use embeddings between approximation spaces, and give a self contained proof that the embedding constants are suitably controlled.
\end{exampl}

%\begin{remark}\label{r:direct}
%With $\tstA^s(Y_0)$ denoting the approximation class analogous to $\tstA^s(X_0)$, where we replace $X_0$ by $Y_0$ and $S_P$ by $A(S_P)$, it is clear that $\tstA^s(\rho)\subset T^{-1}(\tstA^s(Y_0))$, provided we have $\|Tu-Tv\|_{Y_0}\lesssim\rho(u,v,\cdot)$.
%This implies that if $\|u-v\|_{X_0}+\|Tu-Tv\|_{Y_0}\lesssim\rho(u,v,\cdot)$, which is typically true in practice, then $\tstA^s(\rho)\subset \tstA^s(X_0) \cap T^{-1}(\tstA^s(Y_0))$.
%For the other direction, one can easily prove that if
%\begin{equation}\label{e:rho-rho-rho-bound}
%E(u,P) \lesssim \inf_{v\in S_P} \|u-v\|_{X_0} + \inf_{v\in S_P} \|Tu-Tv\|_{Y_0} ,
%\end{equation}
%for $P\in\tstP$ and for $u\in \tstA^s(X_0) \cap T^{-1}(\tstA^s(Y_0))$, then $\tstA^s(X_0) \cap T^{-1}(\tstA^s(Y_0))\subset\tstA^s(\rho)$.
%Note that if $T:X_0\to Y_0$ is an invertible bounded linear operator, then $u\in\tstA^s(X_0)$ would automatically imply $u\in T^{-1}(\tstA^s(Y_0))$, so that $\tstA^s(X_0)\subset\tstA^s(\rho)$ provided \eqref{e:rho-rho-rho-bound} holds for all $u\in\tstA^s(X_0)$.
%However, \eqref{e:rho-rho-rho-bound} is not true in general, since, for example thinking of \eqref{e:Poisson-rho-eg}, $\rho(u,\cdot,P)$ can be infinite even for some $u\in S_{P'}$ with a sufficiently fine mesh $P'$.
%\end{remark}

%%%%%%%%%%%%%%%%%%%%%%%%%%%%%%%%%%%%%%%%%%%%%%%%%%%%%%%%%%%%%%%%%%%%%%%%%%%%%%
\section{Lagrange finite elements}
\label{s:Lagrange}

%%%%%%%%%%%%%%%%%%%%%%%%%%%%%%%%%%%%%%%%%%%%%%%%%%%%%%%%%%%%%%%%%%%%%%%%%%%%%%
\subsection{Preliminaries}

Let $\Omega\subset\R^n$ be a bounded domain.
Then for $0<p\leq\infty$, we define the $r$-th order {\em $\Leb{p}$-modulus of smoothness}
\begin{equation}
\omega_r(u,t,\Omega)_p=\sup_{|h|\leq t}\|\Delta_h^ru\|_{\Leb{p}(\Omega_{rh})}
\end{equation}
where $\Omega_{rh}=\{x\in\Omega:[x,x+rh]\subset\Omega\}$,
and $\Delta_h^r$ is the $r$-th order forward difference operator defined recursively by $[\Delta_h^1u](x)=u(x+h)-u(x)$ and $\Delta_h^ku=\Delta_h^1(\Delta_h^{k-1})u$,
i.e.,
\begin{equation}
\Delta_h^ru (x) = \sum_{k=0}^r (-1)^{r+k} \binom{r}{k} u(x+kh).
\end{equation}
Furthermore, for $0<p,q\leq\infty$, $\alpha\geq0$, and $r\in\N$,
the {\em Besov space} $B^\alpha_{p,q;r}(\Omega)$ consists of those $u\in \Leb{p}(\Omega)$ for which
\begin{equation}
|u|_{B^\alpha_{p,q;r}(\Omega)} = \| t\mapsto t^{-\alpha-1/q}\omega_r(u,t,\Omega)_p \|_{\Leb{q}((0,\infty))},
\end{equation}
is finite.
Since $\Omega$ is bounded, being in a Besov space is a statement about the size of $\omega_r(u,t,\Omega)_p$ only for small $t$.
From this it is easy to derive the useful equivalence
\begin{equation}\label{e:Besov-norm-disc}
|u|_{B^\alpha_{p,q;r}(\Omega)} \eqsim \left\| (\lambda^{j\alpha}\omega_r(u,\lambda^{-j},\Omega)_p)_{j\geq0} \right\|_{\leb{q}},
\end{equation}
for any constant $\lambda>1$.
The mapping $\|\cdot\|_{B^\alpha_{p,q;r}(\Omega)}=\|\cdot\|_{\Leb{p}(\Omega)}+|\cdot|_{B^\alpha_{p,q;r}(\Omega)}$ defines a norm when $p,q\geq1$ and a quasi-norm in general.
If $\alpha>r+\max\{0,\frac1p-1\}$ then the space $B^\alpha_{p,q;r}$ is trivial in the sense that $B^\alpha_{p,q;r}=\Pol_{r-1}$.
On the other hand, so long as $r>\alpha-\max\{0,\frac1p-1\}$, different choices of $r$ will result in quasi-norms that are equivalent to each other, and in this case we have the classical Besov spaces $B^\alpha_{p,q}(\Omega)=B^\alpha_{p,q;r}(\Omega)$.
In the borderline case, the situation depends on the index $q$.
If $0<q<\infty$ and $\alpha=r+\max\{0,\frac1p-1\}$, then $B^\alpha_{p,q;r}=\Pol_{r-1}$.
The case $q=\infty$ gives nontrivial spaces: For instance, we have $B^r_{p,\infty;r}(\Omega)=W^{r,p}(\Omega)$ for $p>1$.
A proof can be found in \cite[page 53]{DL93} for the one dimensional case, and the same proof works in multi-dimensions.

The following result, often called the {\em discrete Hardy inequality}, will be used many times in the subsequent sections.
We include the statement here for convenience. A proof can be found in \cite[page 27]{DL93}.

\begin{lemma}\label{e:Hardy-ineq-disc}
Let $(a_j)_{j\in\Z}$ and $(b_k)_{k\in\Z}$ be two sequences satisfying either
\begin{equation}\label{e:Hardy-ineq-disc-1}
|a_j| \leq C \left( \sum_{k=j}^\infty |b_k|^\mu \right)^{1/\mu}, \qquad j\in\Z,
\end{equation}
for some $\mu>0$ and $C>0$, or
\begin{equation}\label{e:Hardy-ineq-disc-2}
|a_j| \leq C 2^{-\theta j} \left( \sum_{k=-\infty}^j |2^{\theta k} b_k|^\mu \right)^{1/\mu}, \qquad j\in\Z.
\end{equation}
for some positive $\theta$, $\mu$, and $C$.
Then we have
\begin{equation}\label{e:Hardy-ineq-disc-3}
\|(2^{\alpha j}a_j)_j\|_{\leb{q}} \lesssim C\|(2^{\alpha k}b_k)_k\|_{\leb{q}},
\end{equation}
for all $0<q\leq\infty$ and $0<\alpha<\theta$, with the convention that $\theta=\infty$ if \eqref{e:Hardy-ineq-disc-1} holds,
and with the implicit constant depending only on $q$ and $\alpha$.
\end{lemma}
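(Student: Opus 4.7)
The plan is to rescale by $2^{\alpha j}$ and reduce both hypotheses to the same convolution-type bound. Setting $A_j = 2^{\alpha j}|a_j|$ and $B_k = 2^{\alpha k}|b_k|$, the hypothesis \eqref{e:Hardy-ineq-disc-1} becomes
\begin{equation*}
A_j \leq C \left( \sum_{k\geq j} 2^{-\alpha(k-j)\mu} B_k^\mu \right)^{1/\mu},
\end{equation*}
after pulling $2^{\alpha j\mu}$ inside and splitting it as $2^{\alpha k\mu} 2^{-\alpha(k-j)\mu}$. Analogously, \eqref{e:Hardy-ineq-disc-2} reshapes into
\begin{equation*}
A_j \leq C \left( \sum_{k\leq j} 2^{-(\theta-\alpha)(j-k)\mu} B_k^\mu \right)^{1/\mu}.
\end{equation*}
In both cases the kernel has the form $c_m = 2^{-\beta m\mu}$ for some $\beta>0$, namely $\beta=\alpha$ in the first case and $\beta=\theta-\alpha$ in the second. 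The hypothesis $0<\alpha<\theta$ is exactly what ensures $\beta>0$, so that $(c_m)_{m\geq0}$ is geometrically decaying.

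Once both cases are reduced to the common form $A_j^\mu \leq C^\mu \sum_m c_m B_{j\pm m}^\mu$, I would prove the Young-type bound $\|A\|_{\leb{q}} \lesssim C\|B\|_{\leb{q}}$ by splitting on the two regimes $q\geq\mu$ and $q<\mu$. When $q\geq\mu$, applying Minkowski's inequality in $\leb{q/\mu}$ to the sum over the shift $m$ passes the $\leb{q/\mu}$-norm inside, and the problem reduces to the summability of $\sum_m c_m$. When $q<\mu$, the exponent $q/\mu$ is less than one, so subadditivity $(x+y)^{q/\mu}\leq x^{q/\mu}+y^{q/\mu}$ on the positive reals lets one distribute the outer power $q/\mu$ directly into the sum; this time one needs $\sum_m c_m^{q/\mu}<\infty$, which again holds since $\beta q>0$. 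The endpoint $q=\infty$ is immediate from the pointwise estimate $A_j \leq C\bigl(\sum_m c_m\bigr)^{1/\mu}\|B\|_{\leb{\infty}}$.

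The argument is almost entirely mechanical once the rescaling is performed, so no substantive obstacle is anticipated; the main care is only in the bookkeeping needed to treat the two ranges $q\geq\mu$ and $q<\mu$ uniformly. The crucial quantitative fact, in either range, is the geometric decay of the kernel, which is in turn equivalent to the strict inequality $0<\alpha<\theta$ (with the convention $\theta=\infty$ in case \eqref{e:Hardy-ineq-disc-1}). Tracking the constants through the Minkowski step and the subadditivity step shows that the resulting implicit constant depends only on $q$ and $\alpha$ (through $\beta$), as asserted.
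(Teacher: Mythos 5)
Your proof is correct, and it is essentially the standard argument: the paper does not prove this lemma itself but defers to DeVore--Lorentz, where the proof proceeds by the same rescaling to a convolution bound with a geometrically decaying kernel and a case split on $q\geq\mu$ (Minkowski in $\leb{q/\mu}$) versus $q<\mu$ (subadditivity of $t\mapsto t^{q/\mu}$). The only cosmetic caveat is that the constant you obtain also depends on $\mu$ and $\theta$ (through $\beta$ and the geometric sums), not literally on $q$ and $\alpha$ alone, but this imprecision is already present in the statement itself.
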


%%%%%%%%%%%%%%%%%%%%%%%%%%%%%%%%%%%%%%%%%%%%%%%%%%%%%%%%%%%%%%%%%%%%%%%%%%%%%%
\subsection{Quasi-interpolation operators}

Let $\Omega\subset\R^n$ be a bounded polyhedral domain with Lipschitz boundary, 
and fix a conforming partition $P_0$ of $\Omega$.
We also fix a refinement rule, which is either the newest vertex bisection or the red refinement.
This ensures that all partitions are shape regular.
The set $\tstP$ can be chosen to be, in case of the newest vertex bisection, the set of all conforming triangulations arising from $P_0$.
More generally, we want to deal with possibly nonconforming partitions,
and we require that $\tstP$ satisfy the {\em finite support condition} \eqref{e:finite-support} stated below.
We remark that the results in this paper are insensitive to the exact definition of $\tstP$, so long as the family $\tstP$ satisfies \eqref{e:finite-support}.
We note that for nonconforming partitions, the degrees of freedom will be so arranged that they give rise to $H^1$-conforming finite element spaces.
In this regard, the terminology ``nonconforming partition'' may be a bit confusing.

We define the Lagrange finite element spaces
\begin{equation}\label{e:Lagrange-fem-space}
{S}_{P}={S}^m_{P}=\left\{u\in C(\Omega) : u|_{\tau}\in \mathbb{P}_{m}\,\forall\tau\in P\right\} ,
\qquad P\in\tstP ,
\end{equation}
where $\Pol_m$ is the space of polynomials of degree not exceeding $m$.
Thus, for instance, the piecewise linear finite elements would correspond to $m=1$.

Following \cite{GM13}, we will now construct a quasi-interpolation operator $\tilde Q_{P}:\Leb{p_0}(\Omega)\to S_P$ for $p_0>0$ small.
Their construction works almost verbatim here but we need to be a bit careful since we want to include partitions with hanging nodes into the analysis.
Let $\tau_0=\{x\in\R^n:x_1+\ldots+x_n<m\}\cap(0,m)^n$ be the standard simplex.
Then an $n$-simplex $\tau\subset\R^n$ is the image of $\tau_0$ under an invertible affine mapping.
To each  $n$-simplex $\tau$, we associate its nodal set $N_\tau=F(\bar\tau_0\cap\Z^n)$, where $F:\tau_0\to\tau$ is any invertible affine mapping.
%For each $\tau\in P$, we define the set $\{\xi_{\tau,z}:z\in N_\tau\}\subset\Pol_m$ of shape functions by $\xi_{\tau,z}(z')=\delta_{z,z'}$ for $z,z'\in N_\tau$, where $\delta_{z,z'}$ is the Kronecker delta.
The {\em nodal set} $N_P$ of a possibly nonconforming partition $P\in\tstP$ is defined by the requirement that 
$z\in\bigcup_{\tau\in P} N_\tau$ is in  $N_P$ if and only if $z\in N_\tau$ for all $\tau$ satisfying $\bar\tau\ni z$, see Figure \ref{f:Lagrange-nodes}.
Furthermore, we define the {\em nodal basis} $\{\phi_z:z\in N_P\}\subset S_P$ of $S_P$ by $\phi_z(z')=\delta_{z,z'}$ for $z,z'\in N_P$.

The aforementioned {\em finite support condition} is as follows.
We require that there is a constant $C>0$ independent of $P\in\tstP$ and $z\in N_P$, such that 
\begin{equation}\label{e:finite-support}
\#\{\tau\in P:\tau\subset\supp\,\phi_z\} \leq C,
\end{equation}
for all $P\in\tstP$ and $z\in N_P$.
%The constant $C$ will be called the {\em local finiteness constant} of $\tstP$.
It is obvious that conforming triangulations satisfy this requirement.
For partitions with hanging nodes, we refer to \cite{BN10}.

Given $\tau\in P$ and $\sigma\in P$, let us write $\tau\sim\sigma$ if (and only if) there is $z\in N_P$ such that $\tau\cup\sigma\subset\supp\,\psi_z$.
Then, taking into account the refinement rule and the definition of the nodal basis, 
one can show that the finite support condition is equivalent to the {\em strong local finiteness condition}:
\begin{equation}\label{e:local-finite}
\sup_{P\in\tstP}\sup_{\tau\in P}\#\{\sigma\in P:\sigma\sim\tau\} <\infty ,
\end{equation}
which in turn is equivalent to the {\em strong gradedness condition}:
\begin{equation}\label{e:graded}
\sup_{P\in\tstP}\big\{\frac{\diam\,\sigma}{\diam\,\tau}:\tau,\sigma\in P,\,\sigma\sim\tau\big\} <\infty .
\end{equation}
By defining the {\em support extension} $\hat\tau=P(\tau)$ of $\tau\in P$ as the interior of 
\begin{equation}\label{e:supp-ext}
\bigcup \{\sigma\in P:\sigma\sim\tau\} ,
\end{equation}
we can also write the strong gradedness condition as
\begin{equation}
\sup_{P\in\tstP}\sup_{\tau\in P} \frac{\diam\,\hat\tau}{\diam\,\tau} < \infty .
\end{equation}
In what follows, the implicit constant in any of the aforementioned conditions will be referred to as an {\em admissibility constant}.

\begin{figure}[ht]
\centering
\begin{subfigure}{0.35\textwidth}
\includegraphics{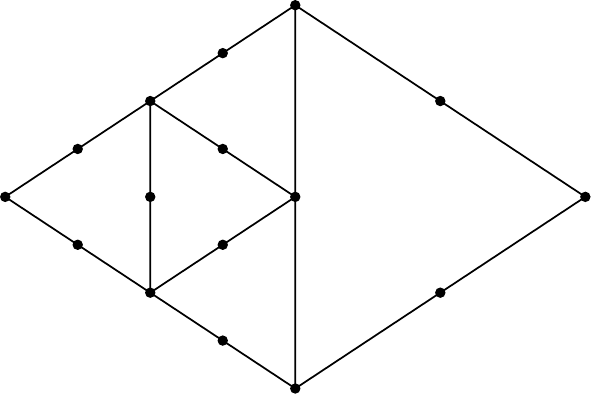}
\subcaption{Quadratic elements ($m=2$).}
\end{subfigure}
\qquad\qquad\qquad
\begin{subfigure}{0.35\textwidth}
\includegraphics{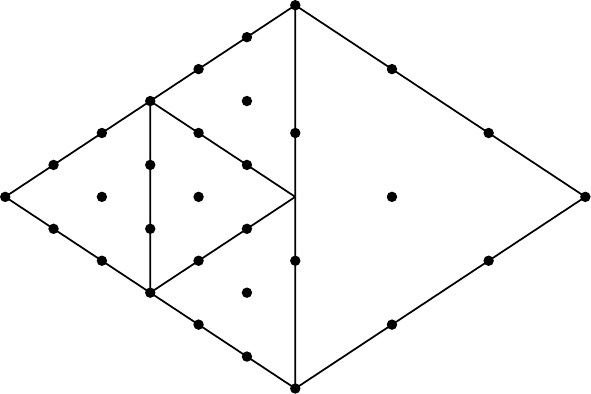}
\subcaption{Cubic elements ($m=3$).}
\end{subfigure}
\caption{Examples of nodal sets.}
\label{f:Lagrange-nodes}
\end{figure}

Next, we introduce a basis dual to $\{\phi_z\}$.
For each $\tau\in P$, we let
\begin{equation}
N_{P,\tau}=\{z\in N_P:\tau\subset\supp\,\phi_z\} ,
\end{equation}
and define $\eta_{\tau,z}\in\Pol_m$, $z\in N_{P,\tau}$,  by the condition 
\begin{equation}
\int_\tau \eta_{\tau,z} \xi_{\tau,z'} = \delta_{z,z'},
\qquad  z,z'\in N_{P,\tau} .
\end{equation}
Note that $\#N_{P,\tau}=\#N_\tau=\dim\Pol_m$, so that the set $\{\eta_{\tau,z}:z\in N_{P,\tau}\}$ is uniquely determined.
Then for $z\in N_P$, we let
\begin{equation}
\tilde\phi_z = \frac1{n_z} \sum_{\{\tau\in P:\,\tau\subset\supp\,\phi_z\}} \chi_\tau\eta_{\tau,z} ,
\end{equation}
where $n_z=\#\{\tau\in P:\tau\subset\supp\,\phi_z\}$, and $\chi_\tau$ is the characteristic function of $\tau$.
By construction, $\supp\,\tilde\phi_z=\supp\,\phi_z$ for $z\in N_P$, and we have the biorthogonality
\begin{equation}
\langle\tilde\phi_z,\phi_{z'}\rangle = \int_\Omega \tilde\phi_z \phi_{z'} = \delta_{z,z'},
\qquad  z,z'\in N_P .
\end{equation}
Now we define the quasi-interpolation operator $Q_P:\Leb{1}(\Omega)\to S_P$ by
\begin{equation}\label{e:quasi-interpolator-std}
Q_Pu = Q_{P}^{(\Omega)}u = \sum_{z\in N_P} \langle u,\tilde\phi_z\rangle\phi_z .
\end{equation}
It is clear that $Q_P$ is linear and that $Q_Pv=v$ for $v\in S_P$.

\begin{lemma}\label{l:quasi-interp-std}
For $1\leq p\leq\infty$, we have
\begin{equation}\label{e:quasi-interp-global-best-std}
\|u - Q_{P} u\|_{\Leb{p}(\Omega)} \lesssim \inf_{v\in S_P} \|u-v\|_{\Leb{p}(\Omega)} ,
\qquad u\in \Leb{p}(\Omega) ,
\end{equation}
with the implicit constant depending only on the shape regularity and admissibility constants of $\tstP$.
Furthermore, for $0<p\leq\infty$ and $\tau\in P$, we have
\begin{equation}\label{e:quasi-interp-bdd-disc-std}
\|Q_{P} v\|_{\Leb{p}(\tau)} \lesssim \|v\|_{\Leb{p}(\hat\tau)} ,
\qquad v\in \bar S^m_P ,
\end{equation}
where 
\begin{equation}\label{e:disc-pol-1}
\bar S^m_P = \{w\in \Leb{\infty}(\Omega):w|_\tau\in\Pol_m\,\forall\tau\in P\} ,
\end{equation}
and $\hat\tau=P(\tau)$ is the support extension of $\tau$ as defined in \eqref{e:supp-ext}.
\end{lemma}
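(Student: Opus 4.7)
The plan is to derive both estimates from a unified local stability bound $\|Q_P w\|_{\Leb{p}(\tau)} \lesssim \|w\|_{\Leb{p}(\hat\tau)}$. For \eqref{e:quasi-interp-global-best-std}, this local bound applied to $w = u - v$ (for $1 \leq p \leq \infty$), combined with the reproduction property $Q_P v = v$ for $v \in S_P$ and summation using the finite overlap of the extensions $\{\hat\tau\}_{\tau \in P}$, immediately yields the near-best approximation bound. For \eqref{e:quasi-interp-bdd-disc-std}, the same local bound is needed for $0 < p \leq \infty$, but only for piecewise polynomial input $w \in \bar S^m_P$, which admits a different route when $p < 1$.

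The local bound rests on a scaling argument. By the strong gradedness condition \eqref{e:graded} and shape regularity, each patch $\hat\tau$ together with its triangulation consists of $O(1)$ simplices of pairwise comparable diameter, so modulo rigid motion and dilation the pairs $(\tau, \hat\tau)$ range over a compact family of reference configurations. On such a configuration, all norms on the finite dimensional polynomial spaces involved are equivalent with uniform constants. A pullback therefore yields $\|\phi_z\|_{\Leb{p}(\tau)} \eqsim |\tau|^{1/p}$, $\|\eta_{\tau,z}\|_{\Leb{p'}(\tau)} \lesssim |\tau|^{1/p' - 1}$, and consequently $\|\tilde\phi_z\|_{\Leb{p'}(\hat\tau)} \lesssim |\hat\tau|^{-1/p}$, where the finite support condition \eqref{e:finite-support} caps the number of summed pieces $\chi_\sigma \eta_{\sigma,z}$ defining $\tilde\phi_z$.

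For part (a), let $1 \leq p \leq \infty$ and $w \in \Leb{p}(\Omega)$. H\"older's inequality and the scaling estimates above give
\begin{equation*}
\bigl|\langle w, \tilde\phi_z \rangle\bigr| \leq \|w\|_{\Leb{p}(\supp\phi_z)} \|\tilde\phi_z\|_{\Leb{p'}(\supp\phi_z)} \lesssim |\hat\tau|^{-1/p} \|w\|_{\Leb{p}(\hat\tau)}
\end{equation*}
for any $z \in N_{P,\tau}$; multiplying by $\|\phi_z\|_{\Leb{p}(\tau)} \lesssim |\tau|^{1/p}$ and summing over the $O(1)$ nodes in $N_{P,\tau}$ produces the local stability. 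Summation over $\tau \in P$, using that each simplex lies in only $O(1)$ patches $\hat\sigma$, yields global $\Leb{p}$-stability of $Q_P$. Invoking $Q_P v = v$ for $v \in S_P$ then gives $\|u - Q_P u\|_{\Leb{p}} = \|(I - Q_P)(u - v)\|_{\Leb{p}} \lesssim \|u - v\|_{\Leb{p}}$ for every $v \in S_P$, and taking the infimum finishes part (a).

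For part (b), H\"older is unavailable when $p < 1$, so we exploit that $v$ is piecewise polynomial. Setting $v_\sigma := v|_\sigma \in \Pol_m$ and using the biorthogonality together with the expansion of $v_\sigma$ in the nodal basis on $\sigma$, a direct computation yields $\langle v, \tilde\phi_z\rangle = \frac{1}{n_z}\sum_{\sigma \subset \supp\phi_z} v_\sigma(z)$, i.e., the average of the nodal values of $v$ at $z$ (distinct across $\sigma$ since $v$ may be discontinuous). Norm equivalence on $\Pol_m$ on the reference element then gives the scale-invariant pointwise bound $|v_\sigma(z)| \lesssim |\sigma|^{-1/p}\|v\|_{\Leb{p}(\sigma)}$ valid for every $0 < p \leq \infty$; combined with $\|\phi_z\|_{\Leb{p}(\tau)} \lesssim |\tau|^{1/p}$ and $|\sigma| \eqsim |\tau|$ for $\sigma \sim \tau$, this delivers the local estimate. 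The principal obstacle throughout is maintaining uniformity of the implicit constants across $P \in \tstP$, which is handled by the compactness of the reference configuration family described above, itself enabled by \eqref{e:graded} and \eqref{e:finite-support}.
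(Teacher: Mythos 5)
Your proposal is correct and follows essentially the same route as the paper: local stability of $Q_P$ via scaling bounds on $\phi_z$ and $\tilde\phi_z$ plus H\"older for $1\leq p\leq\infty$, summation using strong local finiteness, and the reproduction property $Q_Pv=v$ to pass to the near-best estimate. The only difference is that for $0<p<1$ you spell out the argument (evaluating $\langle v,\tilde\phi_z\rangle$ on piecewise polynomials and using the inverse estimate $|v_\sigma(z)|\lesssim|\sigma|^{-1/p}\|v\|_{\Leb{p}(\sigma)}$), whereas the paper simply refers to Lemma 3.2 of \cite{GM13} for this case; your argument is a valid and self-contained version of that step.
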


\begin{proof}
For $1\leq p\leq\infty$ and $u\in \Leb{p}(\Omega)$, we have
\begin{equation}\label{e:quasi-interp-bdd-std-pf}
\|Q_Pu\|_{\Leb{p}(\tau)} 
\leq \sum_{z\in N_{P,\tau}} |\langle u,\tilde\phi_z\rangle| \, \|\phi_z\|_{\Leb{p}} 
\leq \|u\|_{\Leb{p}(\hat\tau)} \sum_{z\in N_{P,\tau}} \|\tilde\phi_z\|_{\Leb{q}} \|\phi_z\|_{\Leb{p}} ,
\end{equation}
where $\frac1p+\frac1q=1$.
It is clear that $\|\phi_z\|_{\Leb{p}}\leq|\supp\,\phi_z|^{1/p}$ and by a scaling argument one can deduce that
$\|\tilde\phi_z\|_{\Leb{q}}\lesssim|\supp\,\tilde\phi_z|^{1/q-1}$, with the implicit constant depending only on the shape regularity and admissibility constants of $\tstP$.
Consequently, for $1\leq p<\infty$, we infer
\begin{equation}
\|Q_Pu\|_{\Leb{p}(\Omega)}^p 
= \sum_{\tau\in P} \|Q_Pu\|_{\Leb{p}(\tau)}^p
\lesssim \sum_{\tau\in P} \|u\|_{\Leb{p}(\hat\tau)}^p
\lesssim \|u\|_{\Leb{p}(\Omega)}^p ,
\end{equation}
by the strong local finiteness of the mesh.
The case $p=\infty$ can be handled similarly, and we have $\|Q_Pu\|_{\Leb{p}(\Omega)}\lesssim\|u\|_{\Leb{p}(\Omega)}$ for $1\leq p\leq\infty$.
Then a standard argument yields \eqref{e:quasi-interp-global-best-std}.

For $1\leq p\leq\infty$, \eqref{e:quasi-interp-bdd-std-pf} implies \eqref{e:quasi-interp-bdd-disc-std}.
The proof for $0<p<1$ follows exactly the same lines as those in the proof of \cite[Lemma 3.2]{GM13}.
\end{proof}

In the following, we fix $p_0>0$, and for $\tau\subset\R^n$ a domain, 
let $\Pi_{p_0,\tau}:\Leb{p_0}(\tau)\to\Pol_m$ be the local polynomial approximation operator given in Definition 3.7 of \cite{GM13}.
We recall the following important properties of this operator, cf. \cite[Theorem 3.8]{GM13}.
\begin{enumerate}[(i)]
\item
There is a constant $C_{m,p_0}$ depending only on $m$ and $p_0$, such that
\begin{equation}\label{e:Pi-near-best}
\|u-\Pi_{p_0,\tau} u\|_{\Leb{p_0}(\tau)} \leq C_{m,p_0} \inf_{v\in\Pol_m} \|u-v\|_{\Leb{p_0}(\tau)} ,
\qquad u\in \Leb{p_0}(\tau) .
\end{equation}
In other words, $\Pi_{p_0,\tau} u$ is a near-best approximation of $u$ from $\Pol_m$ in $\Leb{p_0}(\tau)$.
\item
We have
\begin{equation}\label{e:Pi-bdd}
\|\Pi_{p_0,\tau} u\|_{\Leb{p_0}(\tau)} \lesssim \|u\|_{\Leb{p_0}(\tau)} ,
\qquad u\in \Leb{p_0}(\tau) ,
\end{equation}
i.e., the operator $\Pi_{p_0,\tau}:\Leb{p_0}(\tau)\to \Leb{p_0}(\tau)$ is bounded.
\item
For any $u\in \Leb{p_0}(\tau)$ and $v\in \Pol_m$, we have
\begin{equation}\label{e:Pi-linear}
\Pi_{p_0,\tau} (u+v) = \Pi_{p_0,\tau} u+v .
\end{equation}
In particular, $\Pi_{p_0,\tau}v=v$ for $v\in \Pol_m$.
\end{enumerate}

Finally, we let
\begin{equation}\label{e:quasi-interpolator-disc}
\Pi_Pu = \sum_{\tau\in P} \chi_\tau \Pi_{p_0,\tau} u ,
\end{equation}
and define the operator $\tilde Q_{P}:\Leb{p_0}(\Omega)\to S_P$ by
\begin{equation}\label{e:quasi-interpolator-Lagrange}
\tilde Q_Pu = Q_P\Pi_Pu = \sum_{z\in N_P} \langle \Pi_Pu,\tilde\phi_z\rangle\phi_z .
\end{equation}
It is easy to see that $\tilde Q_Pv=v$ for $v\in S_P$, and that $(\tilde Q_Pu)|_\tau$ depends only on $u|_{\hat\tau}$,
where $\hat\tau=P(\tau)$ is the support extension of $\tau$, as defined in \eqref{e:supp-ext}.
Furthermore, as a consequence of the linearity property \eqref{e:Pi-linear}, we have
\begin{equation}\label{e:quasi-interpolator-linearity}
( \tilde Q_P(u+v) ) |_{\tau} = ( \tilde Q_Pu ) |_{\tau} + v|_\tau ,
\qquad u,v\in \Leb{p_0}(\tau) , \quad v|_{\hat\tau} \in S_P ,
\end{equation}
for $\tau\in P$.

\begin{lemma}\label{l:quasi-interp-best-Lagrange}
Let $p_0\leq p\leq\infty$ and $P\in\tstP$.
Then for $\tau\in P$ we have
\begin{equation}\label{e:quasi-interp-stable}
\|\tilde Q_{P} u\|_{\Leb{p}(\tau)} \lesssim \|u\|_{\Leb{p}(\hat\tau)} ,
\qquad u\in \Leb{p}(\Omega) .
\end{equation}
As a consequence, we have
\begin{equation}\label{e:quasi-interp-local-best-Lagrange}
\|u - \tilde Q_{P} u\|_{\Leb{p}(\tau)} \lesssim \inf_{v\in S_P} \|u-v\|_{\Leb{p}(\hat\tau)} ,
\qquad u\in \Leb{p}(\Omega) ,
\end{equation}
and
\begin{equation}\label{e:quasi-interp-global-best-Lagrange}
\|u - \tilde Q_{P} u\|_{\Leb{p}(\Omega)} \lesssim \inf_{v\in S_P} \|u-v\|_{\Leb{p}(\Omega)} ,
\qquad u\in \Leb{p}(\Omega) .
\end{equation}
\end{lemma}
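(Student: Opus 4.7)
The plan is to establish the three assertions in sequence, with the local stability bound \eqref{e:quasi-interp-stable} doing most of the work and the other two statements following by standard arguments.

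For \eqref{e:quasi-interp-stable}, I would write $\tilde Q_P u = Q_P \Pi_P u$ and apply \eqref{e:quasi-interp-bdd-disc-std} from Lemma \ref{l:quasi-interp-std}, noting that $\Pi_P u \in \bar S_P^m$ by construction. This reduces matters to showing $\|\Pi_P u\|_{L^p(\hat\tau)} \lesssim \|u\|_{L^p(\hat\tau)}$. Since $\Pi_P u = \sum_{\sigma\in P} \chi_\sigma \Pi_{p_0,\sigma} u$ and the elements $\sigma \subset \hat\tau$ are finitely many (by the strong local finiteness \eqref{e:local-finite}), it suffices to bound $\|\Pi_{p_0,\sigma} u\|_{L^p(\sigma)}$ by $\|u\|_{L^p(\sigma)}$ for each such $\sigma$. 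Here the key observation is that $\Pi_{p_0,\sigma} u$ is a polynomial, so by equivalence of norms on the finite-dimensional space $\Pol_m$ (pulled back to the reference simplex via an affine map), we obtain the inverse-type estimate $\|\Pi_{p_0,\sigma} u\|_{L^p(\sigma)} \lesssim |\sigma|^{1/p - 1/p_0} \|\Pi_{p_0,\sigma} u\|_{L^{p_0}(\sigma)}$, with the implicit constant depending only on the shape regularity. Combining this with the $L^{p_0}$-boundedness \eqref{e:Pi-bdd} and H\"older's inequality $\|u\|_{L^{p_0}(\sigma)} \leq |\sigma|^{1/p_0 - 1/p}\|u\|_{L^p(\sigma)}$ (valid since $p \geq p_0$) collapses the scale factors and yields the required bound.

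For the local near-best estimate \eqref{e:quasi-interp-local-best-Lagrange}, I would exploit the ``local projection'' property \eqref{e:quasi-interpolator-linearity}: for any $v \in S_P$, we have $(\tilde Q_P u)|_\tau = (\tilde Q_P(u-v))|_\tau + v|_\tau$, hence
\begin{equation}
\|u - \tilde Q_P u\|_{L^p(\tau)} \leq \|u - v\|_{L^p(\tau)} + \|\tilde Q_P(u-v)\|_{L^p(\tau)} \lesssim \|u - v\|_{L^p(\hat\tau)},
\end{equation}
using \eqref{e:quasi-interp-stable}. Taking the infimum over $v \in S_P$ finishes this step.

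Finally, for the global estimate \eqref{e:quasi-interp-global-best-Lagrange}, I would first pick (or take a minimizing sequence for) the global best approximation $v^* \in S_P$ and use it as a feasible candidate in the local infima. For $p < \infty$,
\begin{equation}
\|u - \tilde Q_P u\|_{L^p(\Omega)}^p = \sum_{\tau \in P} \|u - \tilde Q_P u\|_{L^p(\tau)}^p \lesssim \sum_{\tau \in P} \|u - v^*\|_{L^p(\hat\tau)}^p \lesssim \|u - v^*\|_{L^p(\Omega)}^p,
\end{equation}
where the last inequality uses the strong local finiteness \eqref{e:local-finite} to bound the overlap of the extensions $\hat\tau$. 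The case $p = \infty$ is even simpler, replacing sums by maxima. The only non-routine step in the whole argument is the inverse estimate ``$L^p$ controls by $|\sigma|^{1/p - 1/p_0} L^{p_0}$'' on polynomials, which I expect to be the main conceptual point, though it is standard once the right affine scaling is set up. Uniformity of constants in $\tau$ is ensured by shape regularity and admissibility.
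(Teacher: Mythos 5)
Your proposal is correct and follows essentially the same route as the paper: local stability via \eqref{e:quasi-interp-bdd-disc-std} combined with the polynomial inverse estimate, \eqref{e:Pi-bdd}, and H\"older (using $p_0\leq p$); the local near-best bound via the linearity property \eqref{e:quasi-interpolator-linearity}; and the global bound by summing over $\tau$ using the finite overlap of the $\hat\tau$. The only cosmetic difference is that the paper derives global stability first and then invokes linearity, whereas you sum the local near-best estimates with a fixed global candidate --- the same computation in a different order.
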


\begin{proof}
An application of \eqref{e:quasi-interp-bdd-disc-std} gives
\begin{equation}
\|\tilde Q_{P} u\|_{\Leb{p}(\tau)} = \|Q_{P} \Pi_P u\|_{\Leb{p}(\tau)} \lesssim \|\Pi_P u\|_{\Leb{p}(\hat\tau)} .
\end{equation}
On the other hand, for $\sigma\in P$, we have
\begin{equation}
\|\Pi_{p_0,\sigma} u\|_{\Leb{p}(\sigma)} 
\lesssim |\sigma|^{\frac1p-\frac1{p_0}} \|\Pi_{p_0,\sigma} u\|_{\Leb{p_0}(\sigma)}
\lesssim |\sigma|^{\frac1p-\frac1{p_0}} \|u\|_{\Leb{p_0}(\sigma)}
\leq \|u\|_{\Leb{p}(\sigma)} ,
\end{equation}
where we have used scaling properties of polynomials in the first step,
the boundedness \eqref{e:Pi-bdd} of $\Pi_{p_0,\sigma}$ in the second step,
and the H\"older inequality in the final step.
Using this, with the usual modifications for $p=\infty$, we infer
\begin{equation}
\|\Pi_P u\|_{\Leb{p}(\hat\tau)}
= \left( \sum_{\{\sigma\in P:\sigma\subset\hat\tau\}}\|\Pi_{p_0,\sigma} u\|_{\Leb{p}(\sigma)}^p \right)^{\frac1p}
\lesssim \left( \sum_{\{\sigma\in P:\sigma\subset\hat\tau\}}\|u\|_{\Leb{p}(\sigma)}^p \right)^{\frac1p}
= \|u\|_{\Leb{p}(\hat\tau)} ,
\end{equation}
establishing \eqref{e:quasi-interp-stable}.
Then \eqref{e:quasi-interp-local-best-Lagrange} follows from the linearity property \eqref{e:quasi-interpolator-linearity}.

The estimate \eqref{e:quasi-interp-global-best-Lagrange} is proved by first deriving the stability 
\begin{equation}
\|\tilde Q_{P} u\|_{\Leb{p}(\Omega)} 
= \left( \sum_{\tau\in P} \|\tilde Q_{P} u\|_{\Leb{p}(\tau)}^p \right)^{\frac1p} 
\lesssim \left( \sum_{\tau\in P} \|u\|_{\Leb{p}(\hat\tau)}^p \right)^{\frac1p} 
\lesssim \|u\|_{\Leb{p}(\Omega)} ,
\end{equation}
and then invoking the linearity property \eqref{e:quasi-interpolator-linearity}.
\end{proof}

An important tool in approximation theory is the {\em Whitney estimate}
\begin{equation}\label{e:Whitney-est}
\inf_{v\in\Pol_m} \|u-v\|_{\Leb{p}(G)} \lesssim \omega_{m+1}(u,\diam\,G,G)_p,
\qquad u\in \Leb{p}(G),
\end{equation}
that holds for any convex domain $G\subset\R^n$, 
with the implicit constant depending only on $n$, $m$, and $0<p\leq\infty$, see \cite{DL04a}.
The same estimate is also true when $G$ is the star around $\tau\in P$ for some partition $P\in\tstP$,
with the implicit constant additionally depending on the shape regularity constant of $\tstP$, see \cite{GM13}.

%Such estimates lead to relations between approximation and smoothness.

\begin{lemma}\label{l:quasi-interp-Lagrange}
Let $p_0\leq p\leq\infty$ and let $P\in\tstP$ be {\em conforming}.
Then we have
\begin{equation}\label{e:Jackson-Qj}
\|u - \tilde Q_{P} u\|_{\Leb{p}(\Omega)} 
\lesssim \left( \frac{\max_{\tau\in P}\diam\,\tau}{\min_{\tau\in P}\diam\,\tau} \right)^{\frac{n}p} \omega_{m+1}(u,\max_{\tau\in P}\diam\,\tau,\Omega)_p ,
\qquad u\in \Leb{p}(\Omega) .
\end{equation}
\end{lemma}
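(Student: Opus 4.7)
The plan is to combine the local near-best approximation property from Lemma~\ref{l:quasi-interp-best-Lagrange} with an averaged Whitney-type polynomial approximation estimate on each star, then sum $p$-th powers using shape regularity and bounded overlap. Since $P$ is conforming we have $\Pol_m\subset S_P$, so \eqref{e:quasi-interp-local-best-Lagrange} gives
\[
\|u-\tilde Q_P u\|_{\Leb{p}(\tau)}^p\lesssim\inf_{v\in\Pol_m}\|u-v\|_{\Leb{p}(\hat\tau)}^p\qquad(\tau\in P).
\]
Abbreviating $h_{\min}=\min_{\tau\in P}\diam\,\tau$ and $h_{\max}=\max_{\tau\in P}\diam\,\tau$, my key input for the polynomial approximation is the averaged Whitney inequality
\[
\inf_{v\in\Pol_m}\|u-v\|_{\Leb{p}(\hat\tau)}^p\lesssim |\hat\tau|^{-1}\int_{|h|\leq\diam\,\hat\tau}\|\Delta_h^{m+1}u\|_{\Leb{p}(\hat\tau_{(m+1)h})}^p\,dh,
\]
which can be derived from averaged-Taylor/divided-difference representations of the best $\Leb{p}$-polynomial approximant and is uniform in the shape of $\hat\tau$.

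Using the shape-regularity bound $|\hat\tau|\gtrsim h_{\min}^n$ and enlarging the integration range to $|h|\leq h_{\max}$, summing over $\tau$ and exchanging the sum with the integral by Fubini yields
\[
\|u-\tilde Q_P u\|_{\Leb{p}(\Omega)}^p\lesssim h_{\min}^{-n}\int_{|h|\leq h_{\max}}\sum_{\tau\in P}\|\Delta_h^{m+1}u\|_{\Leb{p}(\hat\tau_{(m+1)h})}^p\,dh.
\]
The bounded overlap of $\{\hat\tau\}$ from the strong local finiteness \eqref{e:local-finite} gives, for each fixed $h$,
\[
\sum_{\tau\in P}\|\Delta_h^{m+1}u\|_{\Leb{p}(\hat\tau_{(m+1)h})}^p\lesssim\|\Delta_h^{m+1}u\|_{\Leb{p}(\Omega_{(m+1)h})}^p\leq\omega_{m+1}(u,h_{\max},\Omega)_p^p.
\]
Since $\{|h|\leq h_{\max}\}$ has volume $\lesssim h_{\max}^n$, the right-hand side collapses to $h_{\min}^{-n}\cdot h_{\max}^n\cdot\omega_{m+1}(u,h_{\max},\Omega)_p^p$, and taking $p$-th roots produces the claimed bound.

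The main obstacle is the averaged Whitney estimate: the supremum-based version \eqref{e:Whitney-est} recorded in the paper cannot be fed directly into the sum, because a supremum over shifts does not commute with summation over $\tau$, which would cost a spurious factor of $\#P$. Replacing the supremum by an integral average (at the expense of the extra $|\hat\tau|$ prefactor) is precisely what makes Fubini and the bounded overlap step work. Pinning down this averaged inequality uniformly in the shape of $\hat\tau$, and throughout the quasi-Banach range $p_0\leq p<\infty$ (with divided-difference techniques for $p<1$), is the delicate technical point; the case $p=\infty$ is much easier and follows directly from \eqref{e:Whitney-est}, since then the $\Leb{\infty}$ norm is a maximum rather than a sum of $p$-th powers, and the ratio factor $(h_{\max}/h_{\min})^{n/p}$ degenerates to $1$.
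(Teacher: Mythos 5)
Your proposal is correct and follows essentially the same route as the paper's proof: local near-best approximation on stars, an averaged (integral) form of the Whitney estimate to make the shift variable summable, Fubini plus bounded overlap of the stars, and the $(h_{\max}/h_{\min})^{n/p}$ factor arising from the mismatch of normalizations. The ``averaged Whitney inequality'' you flag as the delicate point is exactly the composition of \eqref{e:Whitney-est} with the equivalence \eqref{e:avg-modul-equiv} between the averaged and ordinary moduli on stars, which the paper imports from Corollary 4.3 of \cite{GM13} rather than reproving.
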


\begin{proof}
We start with the special case $p=\infty$.
Note that since $P$ is conforming, the support extension $\hat\tau$ of $\tau$ coincides with the star around $\tau$.
It is immediate from \eqref{e:quasi-interp-local-best-Lagrange} and the Whitney estimate \eqref{e:Whitney-est} that
\begin{equation}
\begin{split}
\|u - \tilde Q_{P} u\|_{\Leb{\infty}(\Omega)}
&\lesssim \max_{\tau\in P} \|u - \tilde Q_{P} u\|_{\Leb{\infty}(\tau)} 
\lesssim \max_{\tau\in P} \inf_{v\in S_P} \|u-v\|_{\Leb{\infty}(\hat\tau)} \\
&\lesssim \max_{\tau\in P} \omega_{m+1}(u,\diam\,\hat\tau,\hat\tau)_\infty
\lesssim \max_{\tau\in P} \omega_{m+1}(u,\mu^{-1}\diam\,\hat\tau,\hat\tau)_\infty ,
\end{split}
\end{equation}
with $\mu>0$ sufficiently large,
where in the last line we have used the property
\begin{equation}
\omega_r(u,\mu t,G)_p \leq (\mu+1)^r \omega_r(u,t,G)_p ,
\end{equation}
cf. \cite[\S2.7]{DL93}.
With $t=\displaystyle\mu^{-1}\max_{\tau\in P}\diam\,\hat\tau$, we proceed as
\begin{equation}
\begin{split}
\|u - \tilde Q_{P} u\|_{\Leb{\infty}(\Omega)}
&\lesssim \max_{\tau\in P} \omega_{m+1}(u,t,\hat\tau)_\infty
= \max_{\tau\in P} \sup_{|h|\leq t} \|\Delta^{m+1}_hu\|_{\Leb{\infty}(\hat\tau_{rh})} \\
&= \sup_{|h|\leq t} \max_{\tau\in P} \|\Delta^{m+1}_hu\|_{\Leb{\infty}(\hat\tau_{rh})} 
\leq \sup_{|h|\leq t} \|\Delta^{m+1}_hu\|_{\Leb{\infty}(\Omega_{rh})} ,
\end{split}
\end{equation}
which establishes \eqref{e:Jackson-Qj} for $p=\infty$.

To handle the case $0<p<\infty$ we introduce the averaged $\Leb{p}$-modulus of smoothness
\begin{equation}
w_r(u,t,G)_p = \left( \frac1{t^n} \int_{[0,t]^n} \|\Delta_h^ru\|_{\Leb{p}(G_{rh})}^p \exd h \right)^{1/p} ,
\end{equation}
for any domain $G\subset\R^n$.
When $G$ is Lipschitz, the averaged modulus is equivalent to the original one:
\begin{equation}\label{e:avg-modul-equiv}
w_r(u,t,G)_p \sim \omega_r(u,t,G)_p,
\qquad \textrm{for}\quad t\lesssim1 .
\end{equation}
This equivalence is also true when $G=\tau$ or $G=\hat\tau$ for $\tau\in P$ with $P\in\tstP$,
in the range $t\lesssim\diam\,G$, cf. Corollary 4.3 of \cite{GM13}.
In the latter case, 
the implicit constants depend only on $p$, $r$, the shape regularity constant of $\tstP$, and the geometry of the underlying domain $\Omega$.

Let us get back to the proof of \eqref{e:Jackson-Qj} for $0<p<\infty$.
As in the case $p=\infty$, we have
\begin{equation}
\begin{split}
\|u - \tilde Q_{P} u\|_{\Leb{p}(\Omega)}^p 
&\lesssim \sum_{\tau\in P} \|u - \tilde Q_{P} u\|_{\Leb{p}(\tau)}^p 
\lesssim \sum_{\tau\in P} \inf_{v\in S_P} \|u-v\|_{\Leb{p}(\hat\tau)}^p \\
&\lesssim \sum_{\tau\in P} \omega_{m+1}(u,\diam\,\hat\tau,\hat\tau)_p^p
\lesssim \sum_{\tau\in P} \omega_{m+1}(u,\mu^{-1}\diam\,\hat\tau,\hat\tau)_p^p ,
\end{split}
\end{equation}
with $\mu>0$ sufficiently large.
Now we employ \eqref{e:avg-modul-equiv}, to get
\begin{equation}
\begin{split}
\|u - \tilde Q_{P} u\|_{\Leb{p}(\Omega)}^p 
&\lesssim \sum_{\tau\in P} w_{m+1}(u,\mu^{-1}\diam\,\hat\tau,\hat\tau)_p^p \\
&= \sum_{\tau\in P} \frac1{t(\tau)^n} \int_{[0,t(\tau)]^n} \int_{\hat\tau_{rh}} |\Delta_h^ru(x)|^p \exd x \, \exd h ,
\end{split}
\end{equation}
where $t(\tau)=\mu^{-1}\diam\,\hat\tau$ and $r=m+1$.
With $t_0=\mu^{-1}\displaystyle\min_{\tau\in P}\diam\,\hat\tau$ and $t_1=\mu^{-1}\displaystyle\max_{\tau\in P}\diam\,\hat\tau$,
we can switch the sum with the outer integration as follows.
\begin{equation}
\begin{split}
\|u - \tilde Q_{P} u\|_{\Leb{p}(\Omega)}^p 
&\lesssim \frac1{t_0^n} \int_{[0,t_1]^n} \sum_{\tau\in P} \int_{\hat\tau_{rh}} |\Delta_h^ru(x)|^p \exd x \, \exd h \\
&\lesssim \frac1{t_0^n} \int_{[0,t_1]^n} \int_{\Omega_{rh}} |\Delta_h^ru(x)|^p \exd x \, \exd h \\
&= \frac{t_1^n}{t_0^n} w_r(u,t_1,\Omega)_p^p .
\end{split}
\end{equation}
The proof is completed upon using the equivalence \eqref{e:avg-modul-equiv} for $G=\Omega$.
\end{proof}

%%%%%%%%%%%%%%%%%%%%%%%%%%%%%%%%%%%%%%%%%%%%%%%%%%%%%%%%%%%%%%%%%%%%%%%%%%%%%%
\subsection{Multilevel approximation spaces}
\label{ss:multilevel}

In this subsection, we study approximation from uniformly refined Lagrange finite element spaces.
We keep the setting of the preceding subsection intact,
and define the partitions $P_j$ for $j=1,2,\ldots$ recursively as $P_{j+1}$ is the uniform refinement of $P_j$.
Let $G\subset\Omega$ be a domain consisting of elements from some $P_j$.
More precisely, let $G$ be the interior of $\bigcup_{\tau\in Q}\bar\tau$ for some $Q\subset P_j$ and $j$.
Then with $S_j=S_{P_j}$, and $0<p\leq\infty$, we let
\begin{equation}\label{e:Lp-best}
E(u,S_j)_{\Leb{p}(G)} = \inf_{v\in S_j} \|u-v\|_{\Leb{p}(G)} ,
\qquad u\in \Leb{p}(G) .
\end{equation}
Note that the infimum is achieved since $S_j$ is a finite dimensional space.
We define the {\em multilevel approximation spaces} 
\begin{equation}\label{e:multilevel-approx}
A^\alpha_{p,q}(\{S_j\},G) = \left\{u\in \Leb{p}(G):|u|_{A^\alpha_{p,q}(G)}:=\left\| \left( \lambda^{j\alpha} E(u,S_j)_{\Leb{p}(G)} \right)_{j\geq0} \right\|_{\leb{q}}<\infty \right\} , 
\end{equation}
for $0<p,q\leq\infty$, and $\alpha>0$, 
where $\lambda=2$ for red refinements and $\lambda=\sqrt[n]2$ for newest vertex bisections.
We will also use the shorthand notations 
\begin{equation}
A^\alpha_{p,q}(G) = A^\alpha_{p,q;m}(G) = A^\alpha_{p,q}(\{S_j\},G) .
\end{equation}
These spaces are quasi-Banach spaces with the quasi-norms $\|\cdot\|_{\Leb{p}(G)}+|\cdot|_{A^\alpha_{p,q}(G)}$.
Since $\Omega$ is bounded, it is clear that $A^\alpha_{p,q}(\Omega)\hookrightarrow A^{\alpha}_{p',q}(\Omega)$
for any $\alpha\geq0$, $0<q\leq\infty$ and $\infty\geq p>p'>0$.
We also have the lexicographical ordering:
$A^\alpha_{p,q}(\Omega)\hookrightarrow A^{\alpha'}_{p,q'}(\Omega)$ for $\alpha>\alpha'$ with any $0<q,q'\leq\infty$,
and
$A^\alpha_{p,q}(\Omega)\hookrightarrow A^\alpha_{p,q'}(\Omega)$ for $0<q<q'\leq\infty$.

It is no coincidence that the aforementioned embedding relations are identical to those among Besov spaces.
When reading the following theorem, keep in mind that $B^\alpha_{p,q;m+1}(\Omega)$ is the classical Besov space $B^\alpha_{p,q}(\Omega)$ for $\alpha<m+\max\{1,\frac1p\}$.
%Multilevel approximation spaces are closely related to Besov spaces.

\begin{theorem}\label{t:multilevel-Besov}
We have ${B^\alpha_{p,q;m+1}(\Omega)} \hookrightarrow {A^\alpha_{p,q;m}(\Omega)}$ 
for $0<p,q\leq\infty$, and $\alpha>0$.
In the other direction, we have ${A^\alpha_{p,q;m}(\Omega)} \hookrightarrow {B^\alpha_{p,q;m+1}(\Omega)}$
for $0<p,q\leq\infty$, and $0<\alpha<1+\frac1p$.
\end{theorem}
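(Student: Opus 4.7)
The plan is to establish the two inclusions separately by Jackson- and Bernstein-type arguments. The direct embedding is an immediate Jackson estimate from Lemma~\ref{l:quasi-interp-Lagrange}: since $P_j$ is a uniform refinement of $P_0$, we have $\max_{\tau\in P_j}\diam\,\tau\sim\min_{\tau\in P_j}\diam\,\tau\sim\lambda^{-j}$, so the mesh-ratio factor in that lemma stays bounded in $j$ and
\begin{equation*}
E(u, S_j)_{\Leb{p}(\Omega)} \leq \|u - \tilde Q_{P_j}u\|_{\Leb{p}(\Omega)} \lesssim \omega_{m+1}(u, \lambda^{-j}, \Omega)_p .
\end{equation*}
Multiplying by $\lambda^{j\alpha}$, taking the $\ell^q$-norm in $j$, and invoking the equivalence~\eqref{e:Besov-norm-disc} then yields $|u|_{A^\alpha_{p,q;m}(\Omega)}\lesssim|u|_{B^\alpha_{p,q;m+1}(\Omega)}$.

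For the inverse embedding I would follow the standard Bernstein-Jackson telescoping scheme. The central ingredient is a Bernstein inequality
\begin{equation*}
\omega_{m+1}(v, t, \Omega)_p \lesssim (\lambda^j t)^\beta \|v\|_{\Leb{p}(\Omega)}, \qquad v\in S_j,\;\; 0<t\leq\lambda^{-j},
\end{equation*}
valid for every $0<\beta<1+\tfrac{1}{p}$. Granting it, I would pick near-best approximants $v_j\in S_j$ with $\|u-v_j\|_{\Leb{p}}\leq 2 E(u,S_j)_{\Leb{p}}$, set $w_j := v_{j+1}-v_j\in S_{j+1}$, and write $u = v_0 + \sum_{j\geq 0}w_j$. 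Fix $\beta$ with $\alpha<\beta<1+\tfrac{1}{p}$. For each $k\in\N$, the quasi-triangle inequality in $\Leb{p}$ combined with the Bernstein bound applied to $w_j$ for $j\leq k$ and the trivial estimate $\omega_{m+1}(w_j,\lambda^{-k})_p\leq 2^{m+1}\|w_j\|_{\Leb{p}}$ for $j>k$ gives a bound of $\omega_{m+1}(u,\lambda^{-k},\Omega)_p$ by two weighted sums of $\|w_j\|_{\Leb{p}}$. Two applications of the discrete Hardy inequality (Lemma~\ref{e:Hardy-ineq-disc}), one with $\theta=\beta>\alpha$ and one with $\theta=\infty$, then deliver $\|(\lambda^{k\alpha}\omega_{m+1}(u,\lambda^{-k})_p)_k\|_{\leb{q}}\lesssim\|(\lambda^{j\alpha}\|w_j\|_{\Leb{p}})_j\|_{\leb{q}}$, and since $\|w_j\|_{\Leb{p}}\lesssim E(u,S_j)_{\Leb{p}}$, the equivalence~\eqref{e:Besov-norm-disc} concludes $|u|_{B^\alpha_{p,q;m+1}(\Omega)}\lesssim |u|_{A^\alpha_{p,q;m}(\Omega)}$.

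The hard part is the Bernstein inequality itself, and this is where the restriction $\alpha<1+\tfrac{1}{p}$ originates. The key observation is that if $v\in S_j$ then $\Delta_h^{m+1}v(x)\equiv 0$ whenever the entire segment $[x, x+(m+1)h]$ stays inside a single simplex of $P_j$, because there $v$ is a polynomial of degree at most $m$. Only a tubular neighborhood of thickness $\sim |h|$ around the codimension-one skeleton of $P_j$ therefore contributes to the modulus, and this tube has $n$-dimensional volume $\sim t\cdot \lambda^j$ since the skeleton's total $(n-1)$-content on $\Omega$ is $\sim \lambda^j$. On the tube, continuity of $v$ gives the face-crossing estimate $|\Delta_h^{m+1}v(x)|\lesssim |h|\,\|\nabla v\|_{\Leb{\infty}(\hat\tau)}$ locally, and the elementwise inverse estimates $\|\nabla v\|_{\Leb{\infty}(\tau)}\lesssim\lambda^j\|v\|_{\Leb{\infty}(\tau)}\lesssim\lambda^{j(1+n/p)}\|v\|_{\Leb{p}(\tau)}$ combine, after integration over the tube and summing over faces, to give $\omega_{m+1}(v,t,\Omega)_p\lesssim(\lambda^j t)^{1+1/p}\|v\|_{\Leb{p}(\Omega)}$ for $0<t\leq\lambda^{-j}$, from which the Bernstein bound follows for any $\beta\leq 1+\tfrac{1}{p}$. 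The exponent is sharp: a continuous piecewise polynomial whose gradient genuinely jumps across a face satisfies $\omega_{m+1}(v,t,\Omega)_p\sim t^{1+1/p}$ as $t\to 0^+$, so $v\notin B^{1+1/p}_{p,q;m+1}(\Omega)$ when $p<\infty$, which is precisely why the inverse embedding breaks down at $\alpha=1+\tfrac{1}{p}$.
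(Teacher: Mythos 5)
Your proposal is correct and follows essentially the same route as the paper: the direct embedding is exactly the Jackson estimate of Lemma~\ref{l:quasi-interp-Lagrange} combined with \eqref{e:Besov-norm-disc}, and the inverse embedding is the same Bernstein--telescoping--discrete-Hardy scheme, with the restriction $\alpha<1+\frac1p$ entering through the Bernstein exponent $1+\frac1p$ in both cases. The only substantive difference is how the Bernstein inequality for $v\in S_j$ is obtained: the paper reduces it to the single-basis-function estimate $\omega_{m+1}(\phi_z,t)_p\lesssim\lambda^{-jn/p}\min\{1,(\lambda^jt)^{1+1/p}\}$ (quoted from \cite{GM13}) together with the finite overlap and $\Leb{p}$-stability of the nodal basis, whereas you give a self-contained geometric argument integrating over a tube around the mesh skeleton — which is sound, provided you note that a segment of length $(m+1)|h|$ may cross a bounded number of faces (controlled by shape regularity and the local finiteness condition), so the one-dimensional kink analysis applies on each segment.
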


\begin{proof}
We follow the standard approach.
The inclusion $B^\alpha_{p,q;m+1}(\Omega)\hookrightarrow A^\alpha_{p,q;m}(\Omega)$ is a direct consequence of \eqref{e:Jackson-Qj} with $p_0\leq p$
and the norm equivalence \eqref{e:Besov-norm-disc}.

For the second part, we start with the estimate
\begin{equation}\label{e:phi-z-est}
\omega_{m+1}(\phi_z,t)_p \lesssim \lambda^{-jn/p} \min\{1,(\lambda^{j}t)^{1+1/p}\},
\end{equation}
which holds for all nodal basis functions $\phi_z$ of $S_{j}$ and for all $j\geq0$.
This is Proposition 4.7 in \cite{GM13}, which also holds for $p=\infty$.
Hence for $0<p<\infty$ and for all $u_j=\sum_zb_z\phi_z\in S_{P_j}$, we infer
\begin{equation}
\begin{split}
\omega_{m+1}(u_j,t)_p^p 
&\lesssim \sum_{z} |b_z|^p \omega_{m+1}(\phi_z,t)_p^p 
\lesssim \sum_{z} |b_z|^p \, \lambda^{-jn} \min\{1,(\lambda^{j}t)^{p+1}\} \\
&\lesssim \min\{1,(\lambda^{j}t)^{p+1}\} \|u_j\|_{\Leb{p}(\Omega)}^p,
\end{split}
\end{equation}
where we have used the finite overlap property of the nodal basis functions, 
the $\Leb{p}$ stability of finite elements and the estimate $\|\phi_z\|_{\Leb{p}}\eqsim\lambda^{-jn/p}$.
The same ingredients are used to perform the corresponding computation for $p=\infty$, as
\begin{equation}
\begin{split}
\omega_{m+1}(u_j,t)_\infty
&\lesssim \max_{z} |b_z| \, \omega_{m+1}(\phi_z,t)_\infty
\lesssim \max_{z} |b_z| \min\{1,\lambda^{j}t\} \\
&\lesssim \min\{1,\lambda^{j}t\} \|u_j\|_{\Leb{\infty}(\Omega)} .
\end{split}
\end{equation}

Now we write $u=\sum_{j\geq0}(u_j-u_{j-1})$ with $u_j\in S_j$ a best approximation to $u$ from $S_j$ for $j\geq0$ and $u_{-1}=0$.
Note that the series converges in $\Leb{p}$ by \eqref{e:Jackson-Qj}.
With $p^*=\min\{1,p\}$, we have
\begin{equation}
\begin{split}
\omega_{m+1}(u,\lambda^{-k})_p^{p^*}
&\lesssim \sum_{j\geq0} \omega_{m+1}(u_j-u_{j-1},\lambda^{-k})_p^{p^*} \\
&\lesssim \sum_{j=0}^k \lambda^{(j-k)(1+1/p)p^*} \|u_j-u_{j-1}\|_{\Leb{p}(\Omega)}^{p^*} + \sum_{j=k+1}^\infty \|u_j-u_{j-1}\|_{\Leb{p}(\Omega)}^{p^*},
\end{split}
\end{equation}
and an application of the discrete Hardy inequality (Lemma \ref{e:Hardy-ineq-disc}) gives
\begin{equation}
|u|_{B^\alpha_{p,q;m+1}} \lesssim \left\| \left(\lambda^{j\alpha}\|u_j-u_{j-1}\|_{\Leb{p}(\Omega)} \right)_{j} \right\|_{\leb{q}},
\end{equation}
for $0<p,q\leq\infty$, and $0<\alpha<1+\frac1p$.
Finally, to go from $u_j-u_{j-1}$ to $u-u_j$ in the right hand side, we can apply the triangle inequality to $u_j-u_{j-1} = (u-u_{j-1}) - (u-u_j)$.
\end{proof}

\begin{figure}[ht]
\centering
\includegraphics{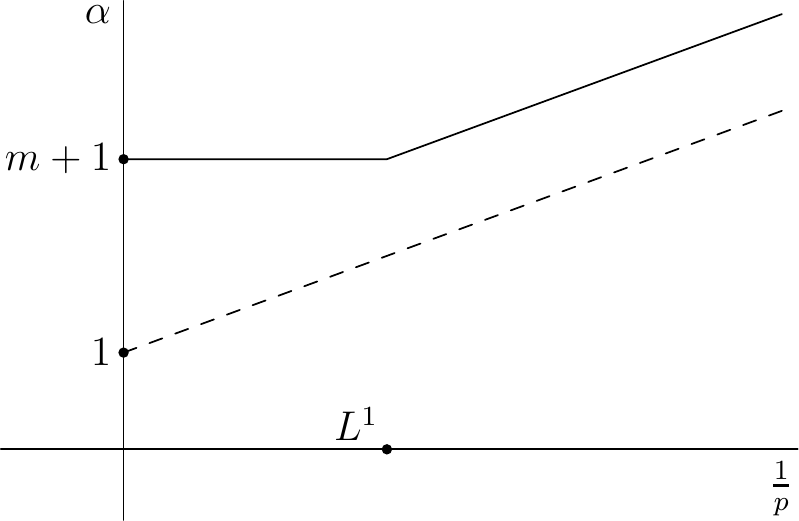}
\caption{%On this diagram, the point $(\frac1p,\alpha)$ represents the spaces $B^\alpha_{p,q;m+1}$, $A^\alpha_{p,q;m}$, etc.
The inverse embedding $A^\alpha_{p,q;m}\hookrightarrow B^\alpha_{p,q;m+1}$ holds below the dashed line.
The direct embedding $B^\alpha_{p,q;m+1}\hookrightarrow A^\alpha_{p,q;m}$ holds without restriction, 
but the spaces $B^\alpha_{p,q;m+1}$ are nontrivial (and coincide with the classical Besov spaces $B^\alpha_{p,q}$) only below the solid line.}
%\label{f:}
\end{figure}

Notice the gap between the two inclusions: While $B^\alpha_{p,q;m+1}(\Omega)\hookrightarrow A^\alpha_{p,q;m}(\Omega)$ holds for all 
$\alpha>0$, the reverse inclusion is proved only for $0<\alpha<1+\frac1p$.
In fact, if $\alpha\geq1+\frac1p$ and $p<\infty$, the forward inclusion is strict:
Any function from $S_{j}$ would be an element of all $A^\alpha_{p,q;m}(\Omega)$,
but there are functions in $S_{j}$ that are not in $B^\alpha_{p,q;m+1}(\Omega)$,
because the estimate \eqref{e:phi-z-est} is saturated for small $t$.
This leads to the expectation that for large $\alpha$, the difference $A^\alpha_{p,q;m}(\Omega)\setminus B^\alpha_{p,q;m+1}(\Omega)$
should be ``skewed'' considerably depending on the initial mesh $P_0$.
We will not pursue this issue here,
but we conjecture that the Besov space $B^\alpha_{p,q;m+1}(\Omega)$ coincides with the intersection of all $A^\alpha_{p,q;m}(\Omega)$ 
as one considers all possible initial triangulations $P_0$.

We quote the following standard result, in order to assure the reader of the fact that the multilevel approximation spaces $A^\alpha_{p,q}(\Omega)$
coincide with the spaces $\hat B^\alpha_{p,q}(\Omega)$ considered in \cite{GM13}, cf. Definition 7.1 and Corollary 4.14 therein.

\begin{theorem}\label{t:multiscale-equiv}
Let $p_0\leq p\leq\infty$, $0<q\leq\infty$ and $\alpha>0$.
Then we have
\begin{equation}
\begin{split}
|u|_{A^\alpha_{p,q}(\Omega)} 
&\sim
\left\| \left( \lambda^{j\alpha} \|u-\tilde Q_ju\|_{\Leb{p}(\Omega)} \right)_{j\geq0} \right\|_{\leb{q}} \\
&\sim
\left\| \left( \lambda^{j\alpha} \|\tilde Q_{j+1}u-\tilde Q_ju\|_{\Leb{p}(\Omega)} \right)_{j\geq0} \right\|_{\leb{q}} ,
\end{split}
\end{equation}
for $u\in\Leb{p}(\Omega)$, where we have used the abbreviation $\tilde Q_j = \tilde Q_{P_j}$ for all $j$.
\end{theorem}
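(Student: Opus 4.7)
The plan is a three-step argument that passes from $E(u,S_j)_{\Leb{p}(\Omega)}$ to $\|u-\tilde Q_ju\|_{\Leb{p}(\Omega)}$ using the near-best-approximation property of $\tilde Q_j$, and from there to the consecutive differences $\|\tilde Q_{j+1}u-\tilde Q_ju\|_{\Leb{p}(\Omega)}$ via the quasi-triangle inequality, a telescoping identity, and the discrete Hardy inequality. For the first $\sim$, since $\tilde Q_ju\in S_j$ we immediately get $E(u,S_j)_{\Leb{p}(\Omega)}\le\|u-\tilde Q_ju\|_{\Leb{p}(\Omega)}$, while the reverse inequality (up to a constant) is precisely \eqref{e:quasi-interp-global-best-Lagrange} in Lemma \ref{l:quasi-interp-best-Lagrange}. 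Taking the weighted $\leb{q}$-quasi-norm in $j$ yields the first equivalence.

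For the upper bound in the second equivalence, I would write $\tilde Q_{j+1}u-\tilde Q_ju=(u-\tilde Q_ju)-(u-\tilde Q_{j+1}u)$ and apply the quasi-triangle inequality of $\Leb{p}$:
\[
\|\tilde Q_{j+1}u-\tilde Q_ju\|_{\Leb{p}(\Omega)}\lesssim\|u-\tilde Q_ju\|_{\Leb{p}(\Omega)}+\|u-\tilde Q_{j+1}u\|_{\Leb{p}(\Omega)}.
\]
Taking weighted $\leb{q}$-quasi-norms and reindexing the $j+1$ term to absorb the one-step shift by a factor $\lambda^{-\alpha}$ shows that the quasi-norm of the differences is controlled by that of the errors, hence by $|u|_{A^\alpha_{p,q}(\Omega)}$ via the already-established first equivalence.

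For the reverse direction I would telescope. Assuming, as may be done for $p_0\le p<\infty$ by $\Leb{p}$-stability of $\tilde Q_j$ and density of $\bigcup_jS_j$ in $\Leb{p}(\Omega)$, that $\tilde Q_ju\to u$ in $\Leb{p}(\Omega)$, one obtains the telescoping identity $u-\tilde Q_ju=\sum_{k\ge j}(\tilde Q_{k+1}u-\tilde Q_ku)$. Combining with the $p^*$-quasi-triangle inequality for $p^*=\min\{1,p\}$ yields
\[
\|u-\tilde Q_ju\|_{\Leb{p}(\Omega)}^{p^*}\le\sum_{k=j}^\infty\|\tilde Q_{k+1}u-\tilde Q_ku\|_{\Leb{p}(\Omega)}^{p^*},
\]
which is precisely hypothesis \eqref{e:Hardy-ineq-disc-1} of Lemma \ref{e:Hardy-ineq-disc} with $\mu=p^*$. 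The discrete Hardy inequality applied with exponent $\alpha>0$ then transfers the $\leb{q}$-bound from the differences to the errors, completing the proof.

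The main obstacle is justifying the convergence $\tilde Q_ju\to u$ in $\Leb{p}(\Omega)$ that underlies the telescoping step, particularly in the quasi-Banach range $p<1$. This is standard for $p_0\le p<\infty$ by the uniform $\Leb{p}$-stability from Lemma \ref{l:quasi-interp-best-Lagrange} combined with density of piecewise polynomials, and the endpoint $p=\infty$ poses no new difficulty once one restricts attention to the subspace of $\Leb{\infty}(\Omega)$ on which the assertion is meaningful. The rest is routine quasi-norm bookkeeping.
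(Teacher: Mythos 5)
Your proposal is correct and follows essentially the same route as the paper: the first equivalence from the near-best-approximation property \eqref{e:quasi-interp-global-best-Lagrange}, one direction of the second from the quasi-triangle inequality applied to $\tilde Q_{j+1}u-\tilde Q_ju=(u-\tilde Q_ju)-(u-\tilde Q_{j+1}u)$, and the other from the telescoping $p^*$-sum combined with the discrete Hardy inequality (Lemma \ref{e:Hardy-ineq-disc}). Your explicit attention to justifying $\tilde Q_ju\to u$ in $\Leb{p}(\Omega)$ before telescoping is a point the paper passes over silently, and your stability-plus-density argument for it is the standard and correct one.
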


\begin{proof}
The first equivalence is immediate from \eqref{e:quasi-interp-global-best-Lagrange}.
The generalized triangle inequality
\begin{equation}
\|\tilde Q_{j+1}u-\tilde Q_ju\|_{\Leb{p}(\Omega)} \lesssim \|u-\tilde Q_ju\|_{\Leb{p}(\Omega)} + \|u-\tilde Q_{j+1}u\|_{\Leb{p}(\Omega)} ,
\end{equation}
implies one of the directions of the second equivalence,
while the other direction follows from applying the discrete Hardy inequality (Lemma \ref{e:Hardy-ineq-disc}) to
\begin{equation}
\|u-\tilde Q_ju\|_{\Leb{p}(\Omega)} \leq \left( \sum_{k=j}^\infty \|\tilde Q_{k+1}u-\tilde Q_ku\|_{\Leb{p}(\Omega)}^{p^*} \right)^{\frac1{p^*}} ,
\end{equation}
where $p^*=\min\{1,p\}$.
\end{proof}

The following technical result will be used later.

\begin{theorem}\label{t:interp-approx}
Let $0<\alpha_1<\alpha_2<\infty$ and $0<p,q,q_1,q_2\leq\infty$.
Then we have 
\begin{equation}\label{e:interp-approx}
[A^{\alpha_1}_{p,q_1}(G),A^{\alpha_2}_{p,q_2}(G)]_{\theta,q}=A^\alpha_{p,q}(G),
\end{equation}
for $\alpha=(1-\theta)\alpha_1+\theta\alpha_2$ and  $0<\theta<1$,
with the equivalence constants of quasi-norms depending only on the parameters $\alpha$, $\alpha_1$, $\alpha_2$, $p$, $q$, $q_1$ and $q_2$.
\end{theorem}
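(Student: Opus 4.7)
The plan is to characterize the $K$-functional $K(u,t) := K(u,t; A^{\alpha_1}_{p,q_1}(G), A^{\alpha_2}_{p,q_2}(G))$ in terms of the approximation errors $E_j(u) := E(u, S_j)_{\Leb{p}(G)}$, and then derive \eqref{e:interp-approx} using the discrete Hardy inequality (Lemma \ref{e:Hardy-ineq-disc}) together with standard convolution estimates. Throughout, set $\eta = \alpha_2 - \alpha_1 > 0$, so $\alpha = \alpha_1 + \theta\eta$, and employ the discrete interpolation quasi-norm \eqref{e:interp-norm-disc} with base $\lambda^\eta$ in place of $\lambda$.

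The easier inclusion $(A^{\alpha_1}_{p,q_1}, A^{\alpha_2}_{p,q_2})_{\theta,q} \hookrightarrow A^\alpha_{p,q}$ is purely formal: for any splitting $u = u_1 + u_2$, the quasi-triangle inequality together with the trivial embeddings $A^{\alpha_i}_{p,q_i} \hookrightarrow A^{\alpha_i}_{p,\infty}$ gives $E_k(u) \lesssim \lambda^{-k\alpha_1}\|u_1\|_{A^{\alpha_1}_{p,q_1}} + \lambda^{-k\alpha_2}\|u_2\|_{A^{\alpha_2}_{p,q_2}}$; taking the infimum yields $\lambda^{k\alpha}E_k(u) \lesssim \lambda^{k\theta\eta}K(u,\lambda^{-k\eta})$, and the $\leb{q}$-norm in $k$ completes this direction.

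For the harder inclusion $A^\alpha_{p,q} \hookrightarrow (A^{\alpha_1}_{p,q_1}, A^{\alpha_2}_{p,q_2})_{\theta,q}$, I would fix $k \geq 0$, let $v_j \in S_j$ be a near-best $\Leb{p}$-approximation to $u$ for each $j$, and decompose $u = v_k + (u - v_k)$. Since $v_k \in S_l$ for all $l \geq k$, $E_l(v_k) = 0$ in that range; while for $l < k$, the quasi-triangle inequality and the monotonicity of $E_j$ yield $E_l(v_k) \leq \|v_k - v_l\|_{\Leb{p}} \lesssim E_l(u)$. Hence $|v_k|_{A^{\alpha_2}_{p,q_2}}^{q_2} \lesssim \sum_{l<k}\lambda^{l\alpha_2 q_2}E_l(u)^{q_2}$. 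Symmetrically, $E_l(u-v_k) = E_l(u)$ for $l \geq k$ (because $v_k \in S_l$) and $E_l(u-v_k) \lesssim E_l(u)$ for $l < k$, giving $|u-v_k|_{A^{\alpha_1}_{p,q_1}}^{q_1} \lesssim \sum_{l \geq k}\lambda^{l\alpha_1 q_1}E_l(u)^{q_1}$, after absorbing the $\Leb{p}$-contribution via $\|u-v_k\|_{\Leb{p}} \lesssim E_k(u)$. Combining, $K(u,\lambda^{-k\eta})$ is bounded above by the sum of the $q_1$-tail of $(\lambda^{l\alpha_1}E_l(u))_l$ and $\lambda^{-k\eta}$ times the $q_2$-head of $(\lambda^{l\alpha_2}E_l(u))_l$.

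Multiplying through by $\lambda^{k\theta\eta}$ and taking the $\leb{q}$-norm in $k$, I would apply the forward Hardy inequality \eqref{e:Hardy-ineq-disc-1} (with $\mu = q_1$ and weight exponent $\beta := \alpha - \alpha_1 > 0$) to control the tail term, and a direct discrete Minkowski/convolution argument — exploiting the strict positivity of $\beta' := \alpha_2 - \alpha$ — to control the head term, splitting by the two cases $q \geq q_2$ and $q < q_2$. Both reductions collapse to $\|(\lambda^{k\alpha}E_k(u))_{k\geq 0}\|_{\leb{q}} \sim |u|_{A^\alpha_{p,q}}$. The main technical obstacle is the parameter bookkeeping, and in particular the boundary-weight issue for the head term — the backward form \eqref{e:Hardy-ineq-disc-2} does not apply at the boundary weight, which is precisely why the strict inequality $\alpha < \alpha_2$ is essential; a secondary nuisance is handling the $\Leb{p}$-pieces of the full quasi-norms $\|\cdot\|_{A^{\alpha_i}_{p,q_i}}$, done by comparison with the leading term in the relevant approximation-error sum.
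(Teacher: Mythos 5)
Your proof is correct and its overall architecture coincides with the paper's: both directions hinge on bounding the $K$-functional at scale $\lambda^{-(\alpha_2-\alpha_1)k}$ via the splitting $u=v_k+(u-v_k)$ with $v_k$ a (near-)best approximation from $S_k$, reducing $K(u,\lambda^{-(\alpha_2-\alpha_1)k})$ to a $q_1$-tail of $(\lambda^{l\alpha_1}E_l(u))_l$ plus $\lambda^{-(\alpha_2-\alpha_1)k}$ times a head of $(\lambda^{l\alpha_2}E_l(u))_l$, and then invoking the discrete Hardy inequalities; the easy direction is identical. The one genuine divergence is how the head term $|v_k|_{A^{\alpha_2}_{p,q_2}}$ is controlled: the paper telescopes $v_k=\sum_{j\le k}(v_j-v_{j-1})$, invokes the Aoki--Rolewicz $\mu$-triangle inequality, and applies the inverse estimate \eqref{e:inverse-est-approx} to each block, producing a head sum with the Aoki--Rolewicz exponent $\mu$; you instead bound $E_l(v_k)$ termwise ($=0$ for $l\ge k$, and $\lesssim E_l(u)$ for $l<k$ via $E_l(v_k)\le\|v_k-v_l\|_{\Leb{p}}$ and monotonicity of $E_l$), which yields the head sum with exponent $q_2$ directly and dispenses with Aoki--Rolewicz altogether. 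Your route is slightly more elementary here and equally valid. One small correction: your claim that the backward Hardy form \eqref{e:Hardy-ineq-disc-2} ``does not apply at the boundary weight'' is not accurate --- taking $\theta=\alpha_2$ and $\mu=q_2$ in \eqref{e:Hardy-ineq-disc-2}, the conclusion \eqref{e:Hardy-ineq-disc-3} holds for all $0<\alpha<\alpha_2$, which is exactly the range you need since $\alpha<\alpha_2$ is strict; this is how the paper handles the head term, so your separate Minkowski/convolution argument, while workable, is unnecessary.
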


\begin{proof}
The equivalence \eqref{e:interp-approx} is standard, 
but we want to keep track of the equivalence constants.
So we sketch a proof here.
First, for $v\in S_m$, we observe the inverse inequality
\begin{equation}\label{e:inverse-est-approx}
|v|_{A^{\alpha_2}_{p,q_2}(G)}^{q_2} 
= \sum_{j=0}^{m-1} \lambda^{\alpha_2q_2j} E(v,S_j,G)_p^{q_2} %\|v-v_{j}\|_{\Leb{p}(G)}^{q_2} 
\leq \|v\|_{\Leb{p}(G)}^{q_2}  \sum_{j=0}^{m-1} \lambda^{\alpha_2q_2j}
\leq \frac{\lambda^{\alpha_2q_2m}}{\lambda^{\alpha_2q_2}-1} \|v\|_{\Leb{p}(G)}^{q_2} .
\end{equation}
It is also true for $q_2=\infty$:
\begin{equation}
|v|_{A^{\alpha_2}_{p,\infty}(G)}
= \max_{0\leq j<m} \lambda^{\alpha_2j} E(v,S_j,G)_p
\leq \lambda^{\alpha_2m} \|v\|_{\Leb{p}(G)} .
\end{equation}
Another fact we will need is the following.
We have the generalized triangle inequality
\begin{equation}
|u+v|_{A^{\alpha_2}_{p,q_2}(G)} \leq c |u|_{A^{\alpha_2}_{p,q_2}(G)} + c|v|_{A^{\alpha_2}_{p,q_2}(G)} ,
\end{equation}
with $c\geq1$ depening only on $p$ and $q_2$.
Then the Aoki-Rolewicz theorem \cite[page 59]{BL76} implies that
\begin{equation}\label{e:-mu-triange-Apq2}
|v_1+\ldots+v_k|_{A^{\alpha_2}_{p,q_2}(G)}^\mu \leq 2 |v_1|_{A^{\alpha_2}_{p,q_2}(G)}^\mu +\ldots+ 2 |v_k|_{A^{\alpha_2}_{p,q_2}(G)}^\mu ,
\end{equation}
for any $v_1,\ldots,v_k\in A^{\alpha_2}_{p,q_2}(G)$, with $\mu$ given by $(2c)^\mu=2$.

With the abbreviation $K(u,t)=K(u,t;A^{\alpha_1}_{p,q_1}(G),A^{\alpha_2}_{p,q_2}(G))$, for $u\in A^{\alpha_1}_{p,q_1}(G)$, we have
\begin{equation}
K(u,\lambda^{-(\alpha_2-\alpha_1)m}) \leq |u-u_m|_{A^{\alpha_1}_{p,q_1}(G)} + \lambda^{-(\alpha_2-\alpha_1)m}|u_m|_{A^{\alpha_2}_{p,q_2}(G)} ,
\end{equation}
where $u_m\in S_m$ is an approximation satisfying $\|u-u_m\|_{\Leb{p}(G)}=E(u,S_m,G)_p$.
We estimate the first term in the right hand side as
\begin{equation}
\begin{split}
|u-u_m|_{A^{\alpha_1}_{p,q_1}(G)}^{q_1} 
&= \sum_{j=0}^m \lambda^{\alpha_1q_1j}\|u-u_m\|_{\Leb{p}(G)}^{q_1} + \sum_{j=m+1}^\infty \lambda^{\alpha_1q_1j}\|u-u_j\|_{\Leb{p}(G)}^{q_1} \\
&\lesssim \sum_{j=m}^\infty \lambda^{\alpha_1q_1j}\|u-u_j\|_{\Leb{p}(G)}^{q_1} ,
\end{split}
\end{equation}
with the implicit constant depending only on $\lambda^{\alpha_1q_1}$, and the second term as
\begin{equation}
\begin{split}
|u_m|_{A^{\alpha_2}_{p,q_2}(G)}^\mu 
&\leq 2\sum_{j=1}^m|u_j-u_{j-1}|_{A^{\alpha_2}_{p,q_2}(G)}^\mu
\lesssim \sum_{j=1}^m \lambda^{\alpha_2\mu j} \|u_j-u_{j-1}\|_{\Leb{p}(G)}^\mu \\
&\lesssim \sum_{j=0}^m \lambda^{\alpha_2\mu j} \|u-u_{j}\|_{\Leb{p}(G)}^\mu ,
\end{split}
\end{equation}
where we have used the $\mu$-triangle inequality \eqref{e:-mu-triange-Apq2} in the first step,
the inverse estimate \eqref{e:inverse-est-approx} in the second step, 
and the (generalized) triangle inequality for the $\Leb{p}$-quasi-norm in the third step.
Note that the implicit constants depend only on $\lambda^{\alpha_2q_2}$, $\lambda^{\alpha_2\mu}$, and $p$.
Putting everything together, we have
\begin{equation}
\begin{split}
K(u,\lambda^{-(\alpha_2-\alpha_1)m}) 
&\lesssim 
\left( \sum_{j=m}^\infty \lambda^{\alpha_1q_1j}\|u-u_j\|_{\Leb{p}(G)}^{q_1} \right)^{\frac1{q_1}} \\
&\quad + \lambda^{-(\alpha_2-\alpha_1)m}
\left( \sum_{j=0}^m \lambda^{\alpha_2\mu j} \|u-u_{j}\|_{\Leb{p}(G)}^\mu \right)^{\frac1\mu} ,
\end{split}
\end{equation}
and then the discrete Hardy inequalities (Lemma \ref{e:Hardy-ineq-disc}) give
\begin{equation}
\left\| [\lambda^{\gamma m} K(u,\lambda^{-(\alpha_2-\alpha_1)m})]_{m\geq0} \right\|_{\leb{q}} 
\lesssim 
\left\| [\lambda^{(\alpha_1+\gamma) m} \|u-u_{j}\|_{\Leb{p}(G)}]_{m\geq0} \right\|_{\leb{q}} ,
\end{equation}
for $0<\gamma<\alpha_2-\alpha_1$.
The left hand side of this inequality is the (quasi) norm for $[A^{\alpha_1}_{p,q_1}(G),A^{\alpha_2}_{p,q_2}(G)]_{\gamma/(\alpha_2-\alpha_1),q}$,
while the right hand side is the (quasi) norm for $A^{\alpha_1+\gamma}_{p,q}(G)$.

For the other direction, we start with 
\begin{equation}
\|u-u_j\|_{\Leb{p}(G)} \leq \|u-w_j-v_j\|_{\Leb{p}(G)} \lesssim \|u-v-w_j\|_{\Leb{p}(G)} + \|v-v_j\|_{\Leb{p}(G)} ,
\end{equation}
where $u\in A^{\alpha_1}_{p,q_1}(G)$ and $u_j\in S_j$ are as before,
and $v\in A^{\alpha_2}_{p,q_2}(G)$, $v_j,w_j\in S_j$ are arbitrary.
Note that the implicit constant depends only on $p$.
Optimizing over $v_j$ and $w_j$ gives
\begin{equation}
\min_{w_j\in S_j} \|u-v-w_j\|_{\Leb{p}(G)} \leq \lambda^{-\alpha_1j}|u-v|_{A^{\alpha_1}_{p,q_1}(G)} ,
\end{equation}
and
\begin{equation}
\min_{v_j\in S_j} \|v-v_j\|_{\Leb{p}(G)} \leq \lambda^{-\alpha_2j}|v|_{A^{\alpha_2}_{p,q_2}(G)} ,
\end{equation}
and substituting these back, we get
\begin{equation}
\begin{split}
\|u-u_j\|_{\Leb{p}(G)} 
&\lesssim \inf_{v\in A^{\alpha_2}_{p,q_2}(G)} \left( \lambda^{-\alpha_1j}|u-v|_{A^{\alpha_1}_{p,q_1}(G)} + \lambda^{-\alpha_2j}|v|_{A^{\alpha_2}_{p,q_2}(G)} \right) \\
&= \lambda^{-\alpha_1j} K(u,\lambda^{-(\alpha_2-\alpha_1)j}) .
\end{split}
\end{equation}
The proof is completed upon recalling the definition of $|\cdot|_{A^{\alpha}_{p,q}(G)}$.
\end{proof}

%%%%%%%%%%%%%%%%%%%%%%%%%%%%%%%%%%%%%%%%%%%%%%%%%%%%%%%%%%%%%%%%%%%%%%%%%%%%%%
\subsection{Adaptive approximation}

In this subsection, we consider the approximation problem from adaptively generated Lagrange finite element spaces.
We study various approximation classes associated to the finite element spaces $S_P$, cf. \eqref{e:Lagrange-fem-space}.
In \cite{BDDP02,GM13}, among other things, it is proved that $B^{\alpha}_{q,q}(\Omega)\hookrightarrow\tstA^s_\infty(\Leb{p}(\Omega))$ with $s=\frac\alpha{n}$,
as long as $\frac\alpha{n}+\frac1p-\frac1q>0$ and $0<\alpha<m+ \max\{1,\frac1q\}$.
In the other direction, 
the same references give $\tstA^s_q(\Leb{p}(\Omega))\hookrightarrow A^{\alpha}_{q,q}(\Omega)$ for $s=\frac\alpha{n}=\frac1q-\frac1p>0$ and $0<p,q<\infty$.

Below we complement these results by establishing direct embeddings of the form $A^{\alpha}_{q,q}(\Omega)\hookrightarrow\tstA^s_\infty(\Leb{p}(\Omega))$.
This is a genuine improvement, since $A^{\alpha}_{q,q}(\Omega)\supsetneq B^{\alpha}_{q,q}(\Omega)$ for $\alpha\geq1+\frac1q$.
Moreover, it seems natural to relate adaptive approximation to multilevel approximation first, 
and then bring in the relationships between multilevel approximation and Besov spaces.
We also remark that while the existing results are only for the newest vertex bisection procedure and conforming triangulations, 
we deal with possibly nonconforming triangulations, and therefore are able to handle the red refinement procedure, 
as well as newest vertex bisections without the conformity requirement.

\begin{theorem}\label{t:direct-Lp}
Let $0<q\leq p\leq\infty$ and $\alpha>0$ satisfy $\frac\alpha{n}+\frac1p-\frac1q>0$ and $q<\infty$.
Then for any $0<p_0<q$ (Recall that $\tilde Q_P$ depends on $p_0$), we have
\begin{equation}\label{e:direct-Lagrange}
\|u-\tilde Q_Pu\|_{\Leb{p}(\Omega)} \lesssim \left( \sum_{\tau\in P} |\tau|^{p\delta} |u|_{A^\alpha_{q,q}(\hat\tau)}^p \right)^{\frac1p} ,
\qquad u\in A^\alpha_{q,q}(\Omega) , \quad P\in\tstP ,
\end{equation}
where $\delta=\frac\alpha{n}+\frac1p-\frac1q$.
In particular, we have $A^{\alpha}_{q,q}(\Omega)\hookrightarrow\tstA^s(\Leb{p}(\Omega))$ with $s=\frac\alpha{n}$.
\end{theorem}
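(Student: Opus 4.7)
The plan is to establish the local estimate $\|u - \tilde Q_P u\|_{\Leb{p}(\tau)} \lesssim |\tau|^\delta |u|_{A^\alpha_{q,q}(\hat\tau)}$ for each $\tau\in P$, sum it in $\ell^p$ over $\tau$ to obtain \eqref{e:direct-Lagrange}, and then invoke Theorem \ref{t:direct-std} to deduce the embedding $A^\alpha_{q,q}(\Omega)\hookrightarrow\tstA^s(\Leb{p}(\Omega))$. For the embedding step, the axiom \eqref{e:loc-X-lower-overlap} with exponent $q$ holds for $|\cdot|_{A^\alpha_{q,q}}$ since $E(u,S_j,G)_q^q \leq \int_G |u - v_j|^q$ for the global best approximant $v_j\in S_j$, and the support extensions arising at the relevant (same) scale have bounded overlap thanks to \eqref{e:graded}; weighting by $\lambda^{jq\alpha}$ and summing in $j$ yields \eqref{e:loc-X-lower-overlap}. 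Theorem \ref{t:direct-std} then delivers $s = \delta + 1/q - 1/p = \alpha/n$ as required.

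For the local estimate I would use the multilevel decomposition
\[
u = \tilde Q_0 u + \sum_{j\geq 0} w_j, \qquad w_j = \tilde Q_{j+1}u - \tilde Q_j u,
\]
which converges in $\Leb{q}(\hat\tau)$ by the local analogue of Theorem \ref{t:multiscale-equiv}. Since $w_j$ is a continuous piecewise polynomial on $P_{j+1}$, the standard inverse estimate (using $q\leq p$) gives $\|w_j\|_{\Leb{p}(\hat\tau)} \lesssim \lambda^{jn(1/q-1/p)}\|w_j\|_{\Leb{q}(\hat\tau)}$, which combined with $\|w_j\|_{\Leb{q}(\hat\tau)}\lesssim E(u,S_j,\hat\tau)_q \lesssim \lambda^{-j\alpha}|u|_{A^\alpha_{q,q}(\hat\tau)}$ yields $\|w_j\|_{\Leb{p}(\hat\tau)} \lesssim \lambda^{-jn\delta}|u|_{A^\alpha_{q,q}(\hat\tau)}$. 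This is geometrically decaying since $\delta>0$, so the series in fact converges in $\Leb{p}(\hat\tau)$ as well.

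Now fix the uniform level $J$ with $\lambda^{-J}\sim h_\tau=\diam\,\tau$, calibrated (via the refinement-tree structure and \eqref{e:graded}) so that each $\sigma\in P$ with $\sigma\subset\hat\tau$ lies inside a single element of $P_J$, and hence of every $P_{j+1}$ with $j<J$. Applying $\tilde Q_P$ termwise and subtracting from $u$,
\[
(u-\tilde Q_P u)|_\tau = (\tilde Q_0 u - \tilde Q_P\tilde Q_0 u)|_\tau + \sum_{j\geq 0}(w_j - \tilde Q_P w_j)|_\tau.
\]
The first term vanishes because $\tilde Q_0 u\in S_{P_0}\subset S_P$ and $\tilde Q_P$ reproduces $S_P$. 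The key observation is that each $j<J$ summand also vanishes: for such $j$, every $\sigma\in P|_{\hat\tau}$ sits inside a single $\tau'\in P_{j+1}$, so $w_j|_\sigma\in\Pol_m$, and combined with the global continuity of $w_j$ this puts $w_j|_{\hat\tau}$ into the local restriction of $S_P$, whereupon \eqref{e:quasi-interpolator-linearity} forces $\tilde Q_P w_j|_\tau = w_j|_\tau$. For $j\geq J$ I use the stability \eqref{e:quasi-interp-stable} to get $\|(w_j - \tilde Q_P w_j)|_\tau\|_{\Leb{p}(\tau)}\lesssim\|w_j\|_{\Leb{p}(\hat\tau)}\lesssim\lambda^{-jn\delta}|u|_{A^\alpha_{q,q}(\hat\tau)}$, and the geometric tail sums to $\lesssim\lambda^{-Jn\delta}|u|_{A^\alpha_{q,q}(\hat\tau)} \sim |\tau|^\delta|u|_{A^\alpha_{q,q}(\hat\tau)}$. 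Raising to the $p$-th power and summing over $\tau\in P$ then yields \eqref{e:direct-Lagrange}; the case $p=\infty$ is handled analogously with max in place of sum.

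The main obstacle I anticipate is the containment argument underlying the vanishing of the $j<J$ summands: I need the newest vertex bisection (or red refinement) tree together with strong gradedness to certify that each $\sigma\in P|_{\hat\tau}$ really sits inside a single $P_{j+1}$-element once $\lambda^{-(j+1)}\gtrsim h_\tau$, and to choose the implicit constants so that the $j<J$ summands genuinely drop out. A secondary technical point is verifying the local analogue of Theorem \ref{t:multiscale-equiv} on $\hat\tau$ --- in particular the bound $\|w_j\|_{\Leb{q}(\hat\tau)}\lesssim\lambda^{-j\alpha}|u|_{A^\alpha_{q,q}(\hat\tau)}$ --- with constants uniform in $\tau$, which should reduce to shape regularity and \eqref{e:graded}.
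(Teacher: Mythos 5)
Your argument is essentially sound and reaches the same reduction to Theorem \ref{t:direct-std}, but the core local estimate is obtained by a genuinely different mechanism than in the paper. The paper bounds $\|u-\tilde Q_Pu\|_{\Leb{p}(\tau)}$ by $\|u-u_{j(\tau)}\|_{\Leb{p}(\hat\tau)}$ with $u_j$ a near-best $\Leb{q}(\hat\tau)$-approximation, telescopes, and is left with the quantity $|u|_{A^\alpha_{q,p^*}(\hat\tau)}$, $p^*=\min\{1,p\}$; since $A^\alpha_{q,q}\not\hookrightarrow A^\alpha_{q,p^*}$ when $q>1$, it must then replace the $u_j$ by the \emph{linear} local quasi-interpolators $Q_{P_j}^{(\hat\tau)}$ and invoke real interpolation via Theorem \ref{t:interp-approx} to convert the third index back to $q$. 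You instead bound each level crudely, $E(u,S_j)_{\Leb{q}(\hat\tau)}\leq\lambda^{-j\alpha}|u|_{A^\alpha_{q,q}(\hat\tau)}$, and let the leftover factor $\lambda^{-jn\delta}$ with $\delta>0$ strictly positive do the summation. This bypasses the interpolation machinery entirely and works uniformly in $q$; the price is a constant that degenerates as $\delta\to0$, which is harmless here. Your verification of \eqref{e:loc-X-lower-overlap} for $|\cdot|_{A^\alpha_{q,q}}$ and the identification of $J$ with $j(\tau)=\min\{[\sigma]:\sigma\in P,\ \sigma\subset\hat\tau\}$ (so that $S_J|_{\hat\tau}\subset S_P|_{\hat\tau}$, with $\lambda^{-J}\sim h_\tau$ by strong gradedness) coincide with what the paper uses.

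Two points need tightening. First, $\tilde Q_P=Q_P\Pi_P$ is \emph{not} linear when $p_0<1$, so you may not ``apply $\tilde Q_P$ termwise'' to the series $\tilde Q_0u+\sum_jw_j$. The correct route, which realizes exactly your intention, is to apply \eqref{e:quasi-interpolator-linearity} a single time to the split $u=\tilde Q_Ju+(u-\tilde Q_Ju)$: since $\tilde Q_Ju|_{\hat\tau}\in S_P|_{\hat\tau}$, one gets $(u-\tilde Q_Pu)|_\tau=(r_J-\tilde Q_Pr_J)|_\tau$ with $r_J=u-\tilde Q_Ju=\sum_{j\geq J}w_j$, and then stability \eqref{e:quasi-interp-stable} and your level-wise bounds finish the tail. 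Second, the local analogue of Theorem \ref{t:multiscale-equiv} with the \emph{global} operators $\tilde Q_j$ gives $\|w_j\|_{\Leb{q}(\hat\tau)}\lesssim E(u,S_j)_{\Leb{q}(\hat\tau^+)}$ only on a slightly enlarged patch $\hat\tau^+$ (the union of $P_j$-support extensions of elements meeting $\hat\tau$); for $j\geq J$ these enlargements are contained in a fixed dilate of $\hat\tau$ of diameter $\sim h_\tau$, so the finite-overlap property needed for Theorem \ref{t:direct-std} survives, but the right-hand side of \eqref{e:direct-Lagrange} should then carry $\hat\tau^+$ in place of $\hat\tau$ (or you should work with the local operators $Q_{P_j}^{(\hat\tau)}$ as the paper does). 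Neither issue is fatal, but both must be addressed for the proof to stand.
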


\begin{proof}
We have the sub-additivity property
\begin{equation}
\sum_{k} \|u\|_{A^\alpha_{q,q}(P_k(\tau_k))}^q \lesssim \|u\|_{A^\alpha_{q,q}(\Omega)}^q,
\end{equation}
for $0<q<\infty$ and for any finite sequences $\{P_k\}\subset\tstP$ and $\{\tau_k\}$, with $\tau_k\in P_k$ and $\{\tau_k\}$ non-overlapping.
Recall that  $\hat\tau=P(\tau)$ is the support extension of $\tau$, as defined in \eqref{e:supp-ext}.
Therefore the estimate \eqref{e:direct-Lagrange} would imply the second statement by Theorem \ref{t:direct-std}.

We shall prove \eqref{e:direct-Lagrange}. 
Every element $\tau\in P$ of any partition $P\in\tstP$ is an element of a unique $P_j$,
with the number $j$ counting how many refinements one needs in order to arrive at $\tau$.
We call $j$ the {\em generation} or the {\em level} of $\tau$, and write $j=[\tau]$.
We will also need $j(\tau)=\min\{[\sigma]:\sigma\in P,\,\sigma\subset\hat\tau\}$.
Note that $|\tau|\sim\lambda^{-n[\tau]}\sim\lambda^{-nj(\tau)}$
and $S_{j(\tau)}|_{\hat\tau}\subset S_P|_{\hat\tau}$.
By invoking \eqref{e:quasi-interp-local-best-Lagrange}, we infer
\begin{equation}
\begin{split}
\|u - \tilde Q_{P} u\|_{\Leb{p}(\Omega)}^p 
&= \sum_{\tau\in P} \|u - \tilde Q_{P} u\|_{\Leb{p}(\tau)}^p 
\lesssim \sum_{\tau\in P} \inf_{v\in S_P} \|u-v\|_{\Leb{p}(\hat\tau)}^p \\
&\leq \sum_{\tau\in P} \|u-u_{j(\tau)}\|_{\Leb{p}(\hat\tau)}^p ,
\end{split}
\end{equation}
where $u_j\in S_j$ ($j\geq0$) is an approximation (that may depend on $\tau$) satisfying 
\begin{equation}\label{e:def-uj-pf}
\|u-u_j\|_{\Leb{q}(\hat\tau)}\leq c E(u,S_j)_{\Leb{q}(\hat\tau)},
\end{equation}
with some constant $c\geq1$.
The same is true for $p=\infty$ with obvious modifications.
For an individual term in the right hand side, with $p^*=\min\{1,p\}$, we have
\begin{equation}
\begin{split}
\|u-u_{j(\tau)}\|_{\Leb{p}(\hat\tau)}^{p^*} 
&\leq
\sum_{j=j(\tau)}^\infty \|u_{j+1}-u_j\|_{\Leb{p}(\hat\tau)}^{p^*} 
\lesssim
\sum_{j=j(\tau)}^\infty \lambda^{(\frac1q-\frac1p)jnp^*}\|u_{j+1}-u_j\|_{\Leb{q}(\hat\tau)}^{p^*} \\
&\lesssim
\sum_{j=j(\tau)}^\infty \lambda^{(\frac1q-\frac1p)jnp^*}\|u-u_j\|_{\Leb{q}(\hat\tau)}^{p^*} ,
\end{split}
\end{equation}
where we have estimated $u-u_{j(\tau)}$ as a telescoping sum in the first step,
and used the estimate $\lambda^{jn/p}\|v\|_{\Leb{p}(\hat\tau)}\sim\lambda^{jn/q}\|v\|_{\Leb{q}(\hat\tau)}$ for $v\in S_{j+1}$ in the second step.
We continue by noting the relation $\frac1q-\frac1p=\frac\alpha{n}-\delta$, which yields
\begin{equation}\label{e:u-uj-pf}
\begin{split}
\|u-u_{j(\tau)}\|_{\Leb{p}(\hat\tau)}^{p^*} 
&\lesssim
\sum_{j=j(\tau)}^\infty \lambda^{-j\delta np^*} \lambda^{j\alpha p^*} \|u-u_j\|_{\Leb{q}(\hat\tau)}^{p^*} \\
&\leq
\lambda^{-j(\tau)\delta np^*} \sum_{j=j(\tau)}^\infty \lambda^{j\alpha p^*} \|u-u_j\|_{\Leb{q}(\hat\tau)}^{p^*} \\
&\lesssim
|\tau|^{\delta p^*} |u|_{A^\alpha_{q,p^*}(\hat\tau)}^{p^*} ,
\end{split}
\end{equation}
by \eqref{e:def-uj-pf}.
This establishes the theorem for $q\leq1$, in which case we have $A^\alpha_{q,q}(\hat\tau)\hookrightarrow A^\alpha_{q,p^*}(\hat\tau)$.

If $q>1$, choose $0<\alpha_1<\alpha<\alpha_2$ satisfying $\alpha=\frac{\alpha_1+\alpha_2}{2}$ and $\delta_i=\frac{\alpha_i}{n}+\frac1p-\frac1q>0$ for $i=1,2$.
Moreover, we put $u_j=Q_{P_j}^{(\hat\tau)}u$, where $Q_{P_j}^{(\hat\tau)}:\Leb{1}(\hat\tau)\to S_j|_{\hat\tau}$ is the quasi-interpolation operator defined in \eqref{e:quasi-interpolator-std},
with $\hat\tau$ playing the role of $\Omega$.
Then Lemma \ref{l:quasi-interp-std} guarantees the property \eqref{e:def-uj-pf} with $c$ depending only on global geometric properties of $\tstP$.
In particular, $c$ is bounded independently of $\tau$.
Thus \eqref{e:u-uj-pf} gives
\begin{equation}
\|u-u_{j(\tau)}\|_{L^{p}(\hat\tau)} \lesssim |\tau|^{\delta_i} |u|_{A^{\alpha_i}_{q,1}(\hat\tau)} ,
\end{equation}
for $i=1,2$.
Since the operators $Q_{P_j}^{(\hat\tau)}$ are linear, so is the map $u\mapsto u-u_{j(\tau)}$,
and hence interpolation and Theorem \ref{t:interp-approx} yield
\begin{equation}
\|u-u_{j(\tau)}\|_{L^{p}(\hat\tau)} 
\lesssim |\tau|^{(\delta_1+\delta_2)/2} |u|_{[A^{\alpha_1}_{q,1}(\hat\tau),A^{\alpha_2}_{q,1}(\hat\tau)]_{1/2,p}}
\lesssim |\tau|^{\delta} |u|_{A^{\alpha}_{q,p}(\hat\tau)}
\lesssim |\tau|^{\delta} |u|_{A^{\alpha}_{q,q}(\hat\tau)} ,
\end{equation}
with the implicit constants depending only on global geometric properties of $\tstP$ and on the indices of the spaces involved.
This completes the proof.
\end{proof}

\begin{figure}[ht]
\centering
\begin{subfigure}{0.45\textwidth}
\includegraphics[width=0.85\textwidth]{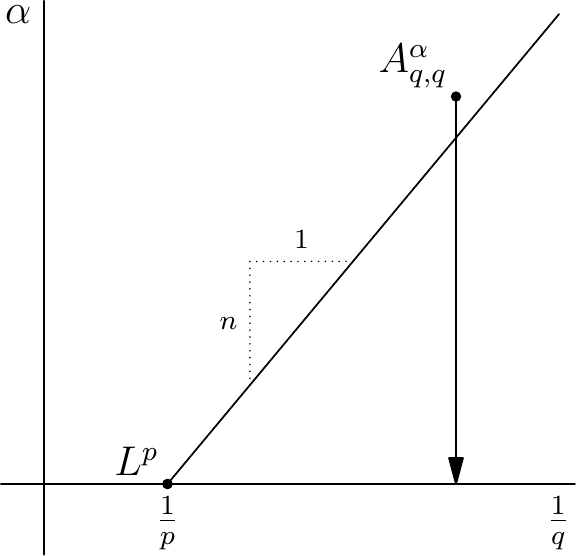}
\subcaption{If the space $A^\alpha_{q,q}$ is located above the solid line, 
we have $A^\alpha_{q,q}\subset\tstA^s(\Leb{p})$ with $s=\frac\alpha{n}$.}
\end{subfigure}
\qquad
\begin{subfigure}{0.45\textwidth}
\includegraphics[width=0.9\textwidth]{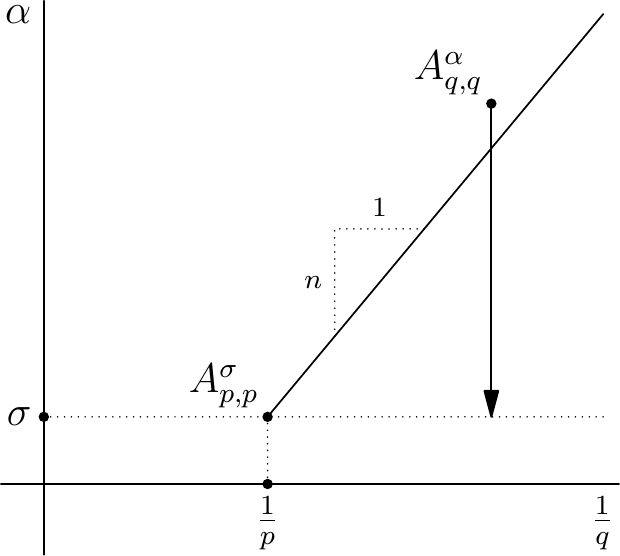}
\subcaption{If the space $A^\alpha_{q,q}$ is located above the solid line, 
we have $A^\alpha_{q,q}\subset\tstA^s(A^\sigma_{p,p})$ with $s=\frac{\alpha-\sigma}{n}$.}
\end{subfigure}
\caption{Illustration of Theorem \ref{t:direct-Lp} and Theorem \ref{t:direct-App}.}
\label{f:direct-embeddings-std}
\end{figure}

Now we look at adaptive approximation in the space $A^\sigma_{p,p}(\Omega)$.
Recall from \cite{GM13} that $\tstA^s_q(A^\sigma_{p,p}(\Omega))\hookrightarrow A^{\alpha}_{q,q}(\Omega)$ for $s=\frac{\alpha-\sigma}{n}=\frac1q-\frac1p>0$ and $0<p,q<\infty$.

\begin{theorem}\label{t:direct-App}
Let $0<q\leq p\leq\infty$, and $\alpha,\sigma>0$ satisfy $\frac{\alpha-\sigma}{n}+\frac1p-\frac1q>0$ and $q<\infty$.
Then for any $0<p_0<q$, we have
\begin{equation}
\|u-\tilde Q_Pu\|_{A^\sigma_{p,p}(\Omega)} \lesssim \left( \sum_{\tau\in P} |\tau|^{p\delta} |u|_{A^\alpha_{q,q}(\hat\tau)}^p \right)^{\frac1p} ,
\qquad u\in A^\alpha_{q,q}(\Omega) , \quad P\in\tstP ,
\end{equation}
with $\delta=\frac{\alpha-\sigma}{n}+\frac1p-\frac1q$.
In particular, we have $A^{\alpha}_{q,q}(\Omega)\hookrightarrow\tstA^s(A^\sigma_{p,p}(\Omega))$ with $s=\frac{\alpha-\sigma}{n}$.
\end{theorem}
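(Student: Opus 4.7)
The strategy is to mirror the proof of Theorem \ref{t:direct-Lp}, with the $\Leb{p}$-norm replaced by the stronger $A^\sigma_{p,p}$-norm on the approximation error. The embedding $A^\alpha_{q,q}(\Omega)\hookrightarrow\tstA^s(A^\sigma_{p,p}(\Omega))$ will then follow from Theorem \ref{t:direct-std} applied with $X_0=A^\sigma_{p,p}(\Omega)$, $X=A^\alpha_{q,q}(\Omega)$ (equipped with its local seminorms $|\cdot|_{A^\alpha_{q,q}(G)}$), and $\rho(u,v,\cdot)=\|u-v\|_{A^\sigma_{p,p}(\Omega)}$, once the displayed local estimate is established. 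The required sub-additivity of $|\cdot|_{A^\alpha_{q,q}(\cdot)}^q$ over non-overlapping sequences of support extensions is inherited from the corresponding property of the $\Leb{q}$-norm appearing in the definition of $A^\alpha_{q,q}$, together with the strong local finiteness of $\tstP$.

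For the main estimate, first I would invoke Theorem \ref{t:multiscale-equiv} to write
\begin{equation*}
\|u-\tilde Q_Pu\|_{A^\sigma_{p,p}(\Omega)}^p \sim \|u-\tilde Q_Pu\|_{\Leb{p}(\Omega)}^p + \sum_{j\geq 0}\lambda^{j\sigma p}\,\|(\tilde Q_{j+1}-\tilde Q_j)(u-\tilde Q_Pu)\|_{\Leb{p}(\Omega)}^p,
\end{equation*}
with the abbreviation $\tilde Q_j=\tilde Q_{P_j}$. The $\Leb{p}$-piece is controlled directly by Theorem \ref{t:direct-Lp} applied with the strictly larger exponent $\delta_0=\tfrac{\alpha}{n}+\tfrac{1}{p}-\tfrac{1}{q}=\delta+\tfrac{\sigma}{n}$; since $|\tau|\leq|\Omega|$, one has $|\tau|^{p\delta_0}\lesssim|\tau|^{p\delta}$ and that piece fits into the target right-hand side.

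For the multilevel sum, I would decompose $\Omega=\bigsqcup_{\tau\in P}\tau$ and, for each $\tau\in P$, split the range of $j$ at the threshold $j\approx[\tau]$. When $j$ exceeds $[\tau]$ by a fixed number of levels and the support extension in $P_{j+1}$ of an element $\tau'\in P_{j+1}$ with $\tau'\subset\tau$ lies inside $\tau$, the restriction of $\tilde Q_Pu$ to that support extension is a single polynomial of degree at most $m$. The local linearity \eqref{e:quasi-interpolator-linearity} then yields $(\tilde Q_{j+1}-\tilde Q_j)(\tilde Q_Pu)|_{\tau'}=0$, so $(\tilde Q_{j+1}-\tilde Q_j)(u-\tilde Q_Pu)|_{\tau'}=(\tilde Q_{j+1}-\tilde Q_j)u|_{\tau'}$. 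On these ``interior'' pieces I would mimic the telescoping of Theorem \ref{t:direct-Lp}: the inverse inequality $\|v\|_{\Leb{p}(\tau')}\lesssim\lambda^{jn(\frac{1}{q}-\frac{1}{p})}\|v\|_{\Leb{q}(\tau')}$ for $v\in S_{P_{j+1}}$ and $\tau'\in P_{j+1}$, together with the identity $\sigma+n(\tfrac{1}{q}-\tfrac{1}{p})=\alpha-n\delta$, the embedding $\ell^q\hookrightarrow\ell^p$ valid for $q\leq p$, the extraction of $\lambda^{-[\tau]n\delta p}\sim|\tau|^{p\delta}$, and the multilevel characterization of $|u|_{A^\alpha_{q,q}(\hat\tau)}$ from Theorem \ref{t:multiscale-equiv}, combine to give
\begin{equation*}
\sum_{j\geq[\tau]}\lambda^{j\sigma p}\,\|(\tilde Q_{j+1}-\tilde Q_j)u\|_{\Leb{p}(\tau)}^p \lesssim |\tau|^{p\delta}\,|u|_{A^\alpha_{q,q}(\hat\tau)}^p.
\end{equation*}
The case $q>1$ is handled by the same interpolation trick (via Theorem \ref{t:interp-approx}) used at the end of the proof of Theorem \ref{t:direct-Lp}.

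The remaining ``transition'' and ``low-frequency'' contributions, namely the boundary-layer $\tau'\in P_j$ whose support extensions cross element faces of $P$, together with the range $j<[\tau]$, constitute the main technical obstacle. I would bound them uniformly by combining the stability estimate $\|(\tilde Q_{j+1}-\tilde Q_j)w\|_{\Leb{p}(\tau')}\lesssim\|w\|_{\Leb{p}(\hat\tau'_{j+1}\cup\hat\tau'_j)}$ applied to $w=u-\tilde Q_Pu$ with the local $\Leb{p}$-bound $\|u-\tilde Q_Pu\|_{\Leb{p}(\hat\tau)}\lesssim|\tau|^{\delta+\sigma/n}|u|_{A^\alpha_{q,q}(\hat\tau)}$ inherited from Theorem \ref{t:direct-Lp}. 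The resulting factor $\lambda^{j\sigma p}|\tau|^{p(\delta+\sigma/n)}=\lambda^{(j-[\tau])\sigma p}|\tau|^{p\delta}$ sums geometrically over $j\leq[\tau]$ to $|\tau|^{p\delta}$, and the $O(1)$-many transition elements per level, controlled by the strong gradedness \eqref{e:graded}, are absorbed likewise. Keeping all implicit constants independent of $\tau$ and $P$ throughout this bookkeeping is the delicate part.
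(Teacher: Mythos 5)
Your overall skeleton --- expand $\|u-\tilde Q_Pu\|_{A^\sigma_{p,p}(\Omega)}$ via the multilevel characterization, split each element's contribution at $j\approx[\tau]$, use cancellation of $\tilde Q_Pu$ at fine levels, and use stability together with the geometric sum $\sum_{j\lesssim[\tau]}\lambda^{j\sigma p}\sim|\tau|^{-\sigma p/n}$ at coarse levels --- is the same as the paper's, and your handling of the range $j<[\tau]$ and of the $\Leb{p}$-piece is sound. The gap is in the fine-level regime $j>[\tau]$. You assert the cancellation $(\tilde Q_{j+1}-\tilde Q_j)(\tilde Q_Pu)|_{\tau'}=0$ only when the support extension of $\tau'$ lies inside $\tau$, and you send the remaining ``boundary-layer'' elements near $\partial\tau$ to the stability argument. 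But such elements exist at \emph{every} level $j>[\tau]$, there are $\sim\lambda^{(j-[\tau])(n-1)}$ of them per level (not $O(1)$ when $n\geq2$), and the stability bound $\|(\tilde Q_{j+1}-\tilde Q_j)w\|_{\Leb{p}(\tau')}\lesssim\|w\|_{\Leb{p}(\hat\tau'_{j+1}\cup\hat\tau'_j)}$ does not decay as $j$ grows, while the weight $\lambda^{j\sigma p}$ grows geometrically. Hence the sum over $j>[\tau]$ of these contributions diverges; ``absorbed likewise'' cannot work, since your geometric summation only converges for $j\leq[\tau]$.

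The fix --- and what the paper actually does --- is to note that the boundary layer is empty. The reproduction property coming from \eqref{e:quasi-interpolator-linearity} only requires that $\tilde Q_Pu$ restricted to the support extension $\hat\tau'_j$ belong to $S_{P_j}$, not that it be a single polynomial; and by the strong gradedness \eqref{e:graded}, every element of $P$ met by $\hat\tau'_j$ (namely $\tau$ and its $P$-neighbours) has generation at most $[\tau]+O(1)$. Therefore $S_P|_{\hat\tau'_j}\subset S_{P_j}|_{\hat\tau'_j}$ for all $j\geq[\tau]+O(1)$, even when $\hat\tau'_j$ crosses $\partial\tau$, and the identity $\tilde Q_j(u-\tilde Q_Pu)=\tilde Q_ju-\tilde Q_Pu$ holds on \emph{all} of $\tau$. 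With that, your telescoping/inverse-estimate computation applies on the whole of $\tau$ for $j\gtrsim[\tau]$ and the argument closes. Two smaller points: applying the discrete Hardy inequality directly to the weighted sum $\sum_{j}\lambda^{j\sigma q}\|u-\tilde Q_ju\|_{\Leb{p}(\tau)}^{q}$ renders the interpolation detour for $q>1$ unnecessary here (the paper remarks on exactly this); and the local bound inherited from Theorem \ref{t:direct-Lp} reads $\|u-\tilde Q_Pu\|_{\Leb{p}(\tau)}\lesssim|\tau|^{\delta+\sigma/n}|u|_{A^\alpha_{q,q}(\hat\tau)}$, with $\tau$ rather than $\hat\tau$ on the left-hand side.
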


\begin{proof}
With $v=u-\tilde Q_Pu$ and $\tilde Q_j=\tilde Q_{P_j}$, we have
\begin{equation}\label{e:original-sum-pf}
\|v\|_{A^\sigma_{p,p}(\Omega)}
\leq \left( \sum_{j\geq0} \lambda^{j\sigma p} \|v-\tilde Q_jv\|_{\Leb{p}(\Omega)}^p \right)^{\frac1p}
= \left( \sum_{\tau\in P} \sum_{j\geq0} \lambda^{j\sigma p} \|v-\tilde Q_jv\|_{\Leb{p}(\tau)}^p \right)^{\frac1p} ,
\end{equation}
with the usual modification for $p=\infty$.
Let $j(\tau)=\max\{[\sigma]:\sigma\in P,\,\sigma\subset\tau\}$ for $\tau\in P$,
as in the preceding proof,
with $[\sigma]$ denoting the generation number (or the level) of $\sigma$. %i.e., how many refinements one needs in order to arrive at $\sigma$.
Then for $\tau\in P$ and $j\geq j(\tau)$ we have $S_P|_{\hat\tau}\subset S_j|_{\hat\tau}$, and hence
\begin{equation}
\tilde Q_j(u-\tilde Q_Pu) = \tilde Q_ju-\tilde Q_Pu
\quad \textrm{on } \tau ,
\end{equation}
by the linearity property \eqref{e:quasi-interpolator-linearity}.
This implies that $v-\tilde Q_jv=u-\tilde Q_ju$ on $\tau$, for all $j\geq j(\tau)$.
Now, %keeping in mind that we need to estimate the sum in the right hand side of \eqref{e:original-sum-pf}, and
proceeding exactly as in the preceding proof, with $p^*=\min\{1,p\}$, we infer
\begin{equation}
\begin{split}
\|u-\tilde Q_ju\|_{\Leb{p}(\tau)}^{p^*} 
&\leq
\sum_{k=j}^\infty \|\tilde Q_{k+1}u-\tilde Q_ku\|_{\Leb{p}(\tau)}^{p^*} 
\lesssim
\sum_{k=j}^\infty \lambda^{(\frac1q-\frac1p)knp^*} \|\tilde Q_{k+1}u-\tilde Q_ku\|_{\Leb{q}(\tau)}^{p^*}  \\
&\lesssim
\sum_{k=j}^\infty \lambda^{(\frac1q-\frac1p)knp^*} \|u-\tilde Q_ku\|_{\Leb{q}(\tau)}^{p^*} .
\end{split}
\end{equation}
Then the discrete Hardy inequality yields
\begin{equation}
\begin{split}
\sum_{j\geq j(\tau)} \lambda^{j\sigma q}\|u-\tilde Q_ju\|_{\Leb{p}(\tau)}^{q} 
&\lesssim
\sum_{k\geq j(\tau)} \lambda^{k\sigma q} \lambda^{(\frac1q-\frac1p)knq} \|u-\tilde Q_ku\|_{\Leb{q}(\tau)}^{q} \\
&\leq
\lambda^{-\delta n q j(\tau)} \sum_{k=j(\tau)}^\infty \lambda^{k\alpha q} \|u-\tilde Q_ku\|_{\Leb{q}(\tau)}^{q} \\
&\lesssim 
|\tau|^{\delta q} |u|_{A^\alpha_{q,q}(\tau)}^q ,
\end{split}
\end{equation}
where we have taken into account the relation $\frac{\sigma}{n}+\frac1q-\frac1p=\frac{\alpha}{n}-\delta$.
Notice that the discrete Hardy inequality made the use of interpolation unnecessary, to compare the present arguments with the proof of the preceding theorem.
This takes care of one of the sums (or maximums) when we split the sum in the right hand side of \eqref{e:original-sum-pf} into two sums according to $j<j(\tau)$ or $j\geq j(\tau)$.
We rewrite the other sum (or maximum) as
\begin{equation}\label{e:arg-pf}
\begin{split}
\left( \sum_{\tau\in P} \sum_{\{j<j(\tau)\}} \lambda^{j\sigma p}\|v-\tilde Q_jv\|_{\Leb{p}(\tau)}^{p} \right)^{\frac1p}
&=
\left( \sum_{j\geq0} \sum_{\{\tau\in P:j(\tau)>j\}} \lambda^{j\sigma p}\|v-\tilde Q_jv\|_{\Leb{p}(\tau)}^{p} \right)^{\frac1p} \\
&=
\left( \sum_{j\geq0} \lambda^{j\sigma p} \|v-\tilde Q_jv\|_{\Leb{p}(\Omega_j)}^{p} \right)^{\frac1p} ,
\end{split}
\end{equation}
where $\Omega_j=\bigcup\{\tau\in P:j(\tau)>j\}$.
Note that $\Omega_j\supset\Omega_j^0$ with $\Omega_j^0=\bigcup\{\tau\in P:[\tau]>j\}$,
and that $\Omega_j^0$ consists of triangles from $P_j$, in the sense that there is $R_j^0\subset P_j$ such that $\Omega_j^0=\bigcup\{\tau\in R_j^0\}$ up to a zero measure set.
The triangles $\tau\in P$ with $\tau\not\subset\Omega_j^0$ are at the level $j$ or less, and 
hence there is $R_j\subset P_j$ such that $\Omega_j=\bigcup\{\tau\in R_j\}$ up to a zero measure set.
Now, by the stability property \eqref{e:quasi-interp-stable}, we get
\begin{equation}
\|v-\tilde Q_jv\|_{\Leb{p}(\Omega_j)}
\lesssim
\|v\|_{\Leb{p}(\Omega_j)} + \|\tilde Q_jv\|_{\Leb{p}(\Omega_j)}
\lesssim
\|v\|_{\Leb{p}(\hat\Omega_j)} ,
\end{equation}
where $\hat\Omega_j = \bigcup\{\tau\in P_j:\bar\tau\cap\Omega_j\neq\varnothing\}$.
Obviously, $\hat\Omega_j$ is a subset of $\hat\Omega_j'=\bigcup\{\tau\in P:\bar\tau\cap\Omega_j\neq\varnothing\}$,
that can also be described as $\hat\Omega_j'=\bigcup\{\tau\in P:j^2(\tau)>j\}$, with
$j^2(\tau)=\max\{j(\sigma):\sigma\in P,\,\bar\sigma\cap\bar\tau\neq\varnothing\}$ for $\tau\in P$.
All this yields
\begin{equation}
\begin{split}
\left( \sum_{\tau\in P} \sum_{\{j<j(\tau)\}} \lambda^{j\sigma p}\|v-\tilde Q_jv\|_{\Leb{p}(\tau)}^{p} \right)^{\frac1p}
&\lesssim
\left( \sum_{\tau\in P} \sum_{\{j<j^2(\tau)\}} \lambda^{j\sigma p}\|u-\tilde Q_Pu\|_{\Leb{p}(\tau)}^{p} \right)^{\frac1p} \\
&\lesssim
\left( \sum_{\tau\in P} |\tau|^{\sigma p/n} \|u-\tilde Q_Pu\|_{\Leb{p}(\tau)}^{p} \right)^{\frac1p} ,
\end{split}
\end{equation}
where we have taken into account the geometric growth of $\lambda^{j\sigma p}$ in $j$,
and the fact that $\lambda^{j^2(\tau)}\sim|\tau|^{1/n}$.
Then once we recall from the proof of the preceding theorem that
\begin{equation}
\|u-\tilde Q_Pu\|_{\Leb{p}(\tau)}
\lesssim
|\tau|^{\delta'} |u|_{A^\alpha_{q,q}(\hat\tau)}
\end{equation}
with $\delta'=\frac\alpha{n}+\frac1p-\frac1q=\delta-\frac\sigma{n}$,
the proof is complete.
\end{proof}

%%%%%%%%%%%%%%%%%%%%%%%%%%%%%%%%%%%%%%%%%%%%%%%%%%%%%%%%%%%%%%%%%%%%%%%%%%%%%%
\subsection{Discontinuous piecewise polynomials}
\label{ss:disc}

All that has been said on multilevel and adaptive approximation for continuous Lagrange finite elements have analogues in 
the world of discontinuous polynomials subordinate to triangulations.
The theory is in fact much simpler due to the absence of the continuity requirement across elements.
Thus we will state here the relevant results and only sketch or omit the proofs.

The notations $\tstP$, $\{P_j\}$, etc., will mean the same things as before.
For $P\in\tstP$, let
\begin{equation}
\bar S_P = \bar S^d_P = \{v\in \Leb{\infty}(\Omega):v|_\tau\in\Pol_d\,\forall\tau\in P\} ,
\end{equation}
where $d$ is a nonnegative integer, and let $\bar S_j=\bar S_{P_j}$ for all $j$.
Then with $G\subset\Omega$ a domain consisting of elements from some $P_j$, we define the {multilevel approximation spaces} 
$A^\alpha_{p,q}(\{\bar S_j\},G)$ by \eqref{e:multilevel-approx}, with the sequence $\{\bar S_j\}$ replacing $\{S_j\}$.
We will also use the shorthand notations 
\begin{equation}
\bar A^\alpha_{p,q}(G) = \bar A^\alpha_{p,q;d}(G) = A^\alpha_{p,q}(\{\bar S_j\},G) .
\end{equation}

The analogue of Theorem \ref{t:multilevel-Besov} is the following.

\begin{theorem}\label{t:multilevel-Besov-disc}
We have ${B^\alpha_{p,q;d+1}(\Omega)} \hookrightarrow {\bar A^\alpha_{p,q;d}(\Omega)}$ 
for $0<p,q\leq\infty$, and $\alpha>0$.
In the other direction, we have ${\bar A^\alpha_{p,q;d}(\Omega)} \hookrightarrow {B^\alpha_{p,q;d+1}(\Omega)}$
for $0<p,q\leq\infty$, and $0<\alpha<\frac1p$.
\end{theorem}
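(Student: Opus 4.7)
The plan is to mirror the proof of Theorem \ref{t:multilevel-Besov}, with the single-scale modulus estimates adjusted to reflect the lower regularity of discontinuous piecewise polynomials. For the direct embedding, I would exploit the fact that $\bar S_j$ permits independent polynomial choices on each element, so that
\[
E(u, \bar S_j)_{\Leb{p}(\Omega)}^p = \sum_{\tau \in P_j} \inf_{v \in \Pol_d} \|u - v\|_{\Leb{p}(\tau)}^p
\]
(with the usual modification for $p=\infty$), and the Whitney estimate \eqref{e:Whitney-est} bounds each summand by $\omega_{d+1}(u, \diam\tau, \tau)_p^p$. Since all elements of $P_j$ have diameter comparable to $\lambda^{-j}$, passing through the averaged modulus as in the proof of Lemma \ref{l:quasi-interp-Lagrange} yields $E(u, \bar S_j)_{\Leb{p}(\Omega)} \lesssim \omega_{d+1}(u, \lambda^{-j}, \Omega)_p$, and the inclusion $B^\alpha_{p,q;d+1}(\Omega) \hookrightarrow \bar A^\alpha_{p,q;d}(\Omega)$ follows from the norm equivalence \eqref{e:Besov-norm-disc}, for all $\alpha > 0$.

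For the inverse embedding, I would follow the strategy of the second half of the proof of Theorem \ref{t:multilevel-Besov}, with the crucial replacement of \eqref{e:phi-z-est} by the single-scale estimate
\[
\omega_{d+1}(v, t)_p \lesssim \min\{1, (\lambda^j t)^{1/p}\} \|v\|_{\Leb{p}(\Omega)},
\qquad v \in \bar S_j.
\]
To prove this, I would observe that $\Delta_h^{d+1} v$ vanishes at any point $x$ for which the entire stencil $\{x, x+h, \ldots, x+(d+1)h\}$ lies inside a single element of $P_j$, since $v|_\tau \in \Pol_d$ annihilates the $(d+1)$-th difference. Consequently $\Delta_h^{d+1} v$ is supported in a $(d+1)|h|$-neighborhood of the skeleton $\bigcup_{\tau \in P_j}\partial\tau$, whose measure is $\lesssim |h| \cdot \lambda^{-j(n-1)} \cdot \#P_j \sim \lambda^j |h|$. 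Bounding $|v|$ pointwise on each element via the inverse estimate $\|v\|_{\Leb{\infty}(\tau)} \lesssim \lambda^{jn/p}\|v\|_{\Leb{p}(\tau)}$ for polynomials and summing gives the nontrivial part of the bound, while the trivial estimate $\omega_{d+1}(v,t)_p \lesssim \|v\|_{\Leb{p}}$ handles $|h| \gtrsim \lambda^{-j}$.

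With this single-scale estimate in hand, I would write $u = \sum_{j \geq 0}(u_j - u_{j-1})$ with $u_j \in \bar S_j$ a best $\Leb{p}$-approximation to $u$ and $u_{-1} = 0$, and set $p^* = \min\{1,p\}$, so that the single-scale bound and the quasi-triangle inequality for $\omega_{d+1}$ give
\[
\omega_{d+1}(u, \lambda^{-k})_p^{p^*} \lesssim \sum_{j=0}^k \lambda^{(j-k)p^*/p} \|u_j - u_{j-1}\|_{\Leb{p}(\Omega)}^{p^*} + \sum_{j > k} \|u_j - u_{j-1}\|_{\Leb{p}(\Omega)}^{p^*}.
\]
The discrete Hardy inequality (Lemma \ref{e:Hardy-ineq-disc}) then translates this into the Besov seminorm bound in terms of $(\lambda^{j\alpha}\|u_j - u_{j-1}\|_{\Leb{p}})_{j}$, valid precisely for $0 < \alpha < \frac1p$, and the decomposition $u_j - u_{j-1} = (u-u_{j-1}) - (u-u_j)$ converts increments into approximation errors.

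The main obstacle I anticipate is the single-scale modulus estimate itself: one must carefully control the pointwise size of $\Delta_h^{d+1}v$ in terms of the $\Leb{p}$ norm of $v$ (rather than its $\Leb{\infty}$ norm) and pin down the measure of the skeleton neighborhood uniformly over the admissible mesh family. Once this is done, the exponent $1/p$ (as opposed to the $1+1/p$ of the continuous case) propagates through the Hardy inequality, and is exactly what imposes the restriction $\alpha < \frac1p$.
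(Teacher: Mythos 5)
Your proposal is correct and is precisely the intended analogue of the proof of Theorem \ref{t:multilevel-Besov} that the paper alludes to when it says the discontinuous case is simpler: element-wise Whitney estimates plus the averaged modulus for the direct part, and a single-scale modulus estimate, telescoping, and the discrete Hardy inequality for the inverse part. Your replacement single-scale bound $\omega_{d+1}(v,t)_p\lesssim\min\{1,(\lambda^jt)^{1/p}\}\|v\|_{\Leb{p}}$ for $v\in\bar S_j$, proved via the skeleton-neighborhood support of $\Delta_h^{d+1}v$ and polynomial inverse estimates, is the right substitute for \eqref{e:phi-z-est} and correctly accounts for the restriction $\alpha<\frac1p$.
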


Note that due to the lack of continuity the inverse inclusion holds in a very small range of indices.
We also have the analogue of Theorem \ref{t:interp-approx}.

\begin{theorem}
Let $0<\alpha_1<\alpha_2<\infty$ and $0<p,q,q_1,q_2\leq\infty$.
Then we have 
\begin{equation}
[\bar A^{\alpha_1}_{p,q_1}(G),\bar A^{\alpha_2}_{p,q_2}(G)]_{\theta,q} = \bar A^\alpha_{p,q}(G),
\end{equation}
for $\alpha=(1-\theta)\alpha_1+\theta\alpha_2$ and  $0<\theta<1$,
with the equivalence constants of quasi-norms depending only on the parameters $\alpha$, $\alpha_1$, $\alpha_2$, $p$, $q$, $q_1$ and $q_2$.
\end{theorem}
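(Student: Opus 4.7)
The plan is simply to observe that the proof of Theorem \ref{t:interp-approx} for continuous Lagrange finite elements goes through verbatim in the discontinuous setting, because the argument never used continuity at element interfaces. I will briefly indicate which ingredients one needs to verify in order to make the transfer rigorous.

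First, the nested structure $\bar S_0\subset\bar S_1\subset\ldots$ holds by construction, since $P_{j+1}$ is a uniform refinement of $P_j$ and any piecewise polynomial of degree $\leq d$ on $P_j$ is also one on $P_{j+1}$. In particular, for any $v\in\bar S_m$ one has $E(v,\bar S_j,G)_p=0$ for $j\geq m$, which yields the same inverse inequality as \eqref{e:inverse-est-approx}: for $0<q_2<\infty$,
\begin{equation*}
|v|_{\bar A^{\alpha_2}_{p,q_2}(G)}^{q_2} \leq \frac{\lambda^{\alpha_2q_2m}}{\lambda^{\alpha_2q_2}-1}\,\|v\|_{\Leb{p}(G)}^{q_2},
\end{equation*}
with the obvious modification for $q_2=\infty$. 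Second, the generalized triangle inequality for $|\cdot|_{\bar A^{\alpha_2}_{p,q_2}(G)}$ holds with a constant depending only on $p$ and $q_2$, so the Aoki--Rolewicz theorem again provides a $\mu$-triangle inequality analogous to \eqref{e:-mu-triange-Apq2}.

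With these two ingredients in hand, the proof of the embedding $[\bar A^{\alpha_1}_{p,q_1}(G),\bar A^{\alpha_2}_{p,q_2}(G)]_{\theta,q}\hookrightarrow \bar A^{\alpha}_{p,q}(G)$ proceeds exactly as in Theorem \ref{t:interp-approx}: take $u_m\in\bar S_m$ a best $\Leb{p}$-approximation to $u$, split the $K$-functional estimate at level $m$, use the inverse inequality on $u_m$ written as a telescoping sum $u_m=\sum_{j\leq m}(u_j-u_{j-1})$, and then apply the discrete Hardy inequality of Lemma \ref{e:Hardy-ineq-disc} in both of its forms. For the reverse embedding $\bar A^{\alpha}_{p,q}(G)\hookrightarrow[\bar A^{\alpha_1}_{p,q_1}(G),\bar A^{\alpha_2}_{p,q_2}(G)]_{\theta,q}$, one writes, for arbitrary $v\in \bar A^{\alpha_2}_{p,q_2}(G)$ and $v_j,w_j\in\bar S_j$,
\begin{equation*}
\|u-u_j\|_{\Leb{p}(G)} \lesssim \|u-v-w_j\|_{\Leb{p}(G)} + \|v-v_j\|_{\Leb{p}(G)},
\end{equation*}
optimizes over $w_j,v_j$ to bring in the seminorms of $u-v$ and $v$, and then recognizes the infimum over $v$ as the $K$-functional $K(u,\lambda^{-(\alpha_2-\alpha_1)j};\bar A^{\alpha_1}_{p,q_1}(G),\bar A^{\alpha_2}_{p,q_2}(G))$. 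Tracking the constants through these steps shows that they depend only on the indices $\alpha$, $\alpha_1$, $\alpha_2$, $p$, $q$, $q_1$, $q_2$, as claimed.

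There is no genuine obstacle here; the only point that warrants checking is the uniformity of the constants in the Aoki--Rolewicz step and the discrete Hardy inequalities, but these are standard and their implicit constants depend only on the quasi-norm constants and the indices. Thus the result follows directly from the argument already given for the continuous case.
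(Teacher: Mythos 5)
Your observation is correct, and it is exactly what the paper intends (the paper states this theorem without proof, deferring implicitly to the proof of Theorem \ref{t:interp-approx}). The argument in that proof uses only the nestedness $\bar S_j\subset\bar S_{j+1}$, the trivial bound $E(v,\bar S_j,G)_p\leq\|v\|_{\Leb{p}(G)}$, the Aoki--Rolewicz $\mu$-triangle inequality, and the discrete Hardy inequalities of Lemma \ref{e:Hardy-ineq-disc} --- none of which depends on the inter-element continuity of the approximating spaces --- so it carries over verbatim with the constants depending only on the stated indices.
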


Finally, we want to record some results on adaptive approximation by discontinuous polynomials subordinate to the partitions in $\tstP$.
Given $0<p\leq\infty$, $\theta\in\R$, and $P\in\tstP$, we define the norm
\begin{equation}\label{e:Leb-disc}
\|u\|_{\Leb[\theta]{p}(\Omega)} = \left( \sum_{\tau\in P} |\tau|^{\frac{\theta p}n} \|u\|_{\Leb{p}(\tau)}^p \right)^{\frac1p} ,
\qquad \textrm{for} \quad u\in\Leb{p}(\Omega) ,
\end{equation}
with the obvious modification for $p=\infty$, and denote by $\Leb[\theta]{p}(\Omega)$ the space $\Leb{p}(\Omega)$ equipped with this norm.
Then we define the approximation class $\bar\tstA^s_{q;d}(\Leb[\theta]{p}(\Omega))$ exactly as $\tstA^s_q(\Leb{p}(\Omega))$,
by replacing $S^m_P$ with $\bar S^d_P$, and by using the distance function
\begin{equation}\label{e:rho-disc}
\rho(u,v,P)  = \|u-v\|_{\Leb[\theta]{p}(\Omega)} .
\end{equation}
More precisely, recalling the definition \eqref{e:A-rho-def}, let
\begin{equation}\label{e:adapt-class-disc}
\bar\tstA^s_{q;d}(\Leb[\theta]{p}(\Omega)) = \tstA^s_q(\rho,\tstP,\{\bar S^d_P\}) ,
\end{equation}
with $\rho$ given by \eqref{e:rho-disc}.
It is for later reference that we have introduced the mesh dependent weight in the distance function.
We write $\bar\tstA^s_{q;d}(\Leb{p}(\Omega))=\bar\tstA^s_{q;d}(\Leb[0]{p}(\Omega))$.

We have the following direct embedding result.

\begin{theorem}\label{t:direct-Lp-disc}
Let $0<q\leq p\leq\infty$, $\alpha>0$ and $\theta\geq0$ satisfy $\frac\alpha{n}+\frac1p-\frac1q>0$ and $q<\infty$, with $\frac\alpha{n}+\frac1p-\frac1q=0$ allowed if $\theta>0$.
Then we have $\bar A^{\alpha}_{q,q;d}(\Omega)\hookrightarrow\bar\tstA^s_{\infty;d}(\Leb[\theta]{p}(\Omega))$ with $s=\frac{\alpha+\theta}{n}$.
\end{theorem}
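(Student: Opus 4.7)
The strategy is to apply Theorem \ref{t:direct-std} with the auxiliary space $X = \bar A^{\alpha}_{q,q;d}(\Omega)$, the ambient space $X_0 = \Leb[\theta]{p}(\Omega)$, and the distance function $\rho$ given by \eqref{e:rho-disc}. The goal reduces to establishing the local approximation bound
\begin{equation}
E(u,\bar S^d_P)_\rho \lesssim \left( \sum_{\tau\in P} |\tau|^{p\delta} |u|_{\bar A^{\alpha}_{q,q;d}(\tau)}^p \right)^{\frac1p},
\end{equation}
with $\delta = \frac{\alpha+\theta}{n} + \frac1p - \frac1q$, so that Theorem \ref{t:direct-std} then yields the claimed embedding with $s = \delta + \frac1q - \frac1p = \frac{\alpha+\theta}{n}$. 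The hypotheses on $\alpha,p,q,\theta$ are exactly what is needed to ensure $\delta > 0$, with the borderline $\frac\alpha n + \frac1p - \frac1q = 0$ accommodated precisely when $\theta>0$.

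The key simplification in the discontinuous setting is that $\bar S^d_P|_\tau = \Pol_d$, so one may build the approximation element by element with no matching across interfaces; in particular, the support extension $\hat\tau$ that was essential in the proof of Theorem \ref{t:direct-Lp} can be replaced by $\tau$ itself. I would take $v|_\tau = \Pi_{p_0,\tau} u$ and derive the local estimate
\begin{equation}
\|u-\Pi_{p_0,\tau}u\|_{\Leb{p}(\tau)} \lesssim |\tau|^{\frac{\alpha}n + \frac1p - \frac1q} |u|_{\bar A^{\alpha}_{q,q;d}(\tau)},
\end{equation}
by the very same telescoping argument as in the proof of Theorem \ref{t:direct-Lp}: expand $u-u_{j(\tau)}$ as $\sum_{j\geq j(\tau)}(u_{j+1}-u_j)$ for $u_j\in\bar S_j|_\tau$ a near-best $\Leb{q}(\tau)$ approximation, apply the inverse inequality $\lambda^{jn/p}\|w\|_{\Leb{p}(\tau)} \sim \lambda^{jn/q}\|w\|_{\Leb{q}(\tau)}$ for $w\in\bar S_{j+1}|_\tau$, and use the definition of the multilevel (quasi-)seminorm on $\tau$. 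The case $q > 1$ is handled, as in Theorem \ref{t:direct-Lp}, by replacing $u_j$ with the linear quasi-interpolator $Q_{P_j}^{(\tau)}u$ and interpolating two Jackson-type bounds at indices $\alpha_1 < \alpha < \alpha_2$ via Theorem \ref{t:interp-approx}; this step is in fact cleaner here since there is no continuity constraint to preserve. Multiplying by $|\tau|^{\theta/n}$, raising to the $p$-th power, and summing over $\tau\in P$ gives the displayed bound on $E(u,\bar S^d_P)_\rho$ with the advertised $\delta$.

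The sub-additivity \eqref{e:loc-X-lower-overlap} for $X = \bar A^{\alpha}_{q,q;d}$ is straightforward in the discontinuous regime: for $\{\tau_k\}$ non-overlapping with $\tau_k\in P_k\in\tstP$ and for each $j$ sufficiently large that the $\tau_k$ are unions of cells from $P_j$, one has $\sum_k E(u,\bar S_j)_{\Leb{q}(\tau_k)}^q \leq E(u,\bar S_j)_{\Leb{q}(\Omega)}^q$ since the best $\Leb{q}$ approximation on $\Omega$ restricts to a competitor on each $\tau_k$; summing in $j$ with weight $\lambda^{j\alpha q}$ delivers \eqref{e:loc-X-lower-overlap}. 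The main obstacle is therefore technical rather than conceptual: one must verify that the constants in the inverse inequality, the near-best property \eqref{e:Pi-near-best} of $\Pi_{p_0,\tau}$, and the interpolation equivalence of Theorem \ref{t:interp-approx} when applied on a generic $\tau\in P$, all remain uniform in $\tau$, $P$, and the refinement level. The discontinuous framework and the element-wise character of $\Pi_{p_0,\tau}$ make this uniformity essentially automatic once one inspects the scaling of each ingredient, which is precisely why the present theorem requires no analogue of the finite-support machinery of Subsection \ref{ss:multilevel}.
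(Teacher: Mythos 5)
Your proof matches the paper's own approach: both apply $\Pi_{p_0,\tau}$ element by element (so that the support extension $\hat\tau$ collapses to $\tau$), obtain the local estimate $\|u-\Pi_{p_0,\tau}u\|_{\Leb{p}(\tau)}\lesssim|\tau|^{\frac{\alpha}{n}+\frac1p-\frac1q}|u|_{\bar A^\alpha_{q,q;d}(\tau)}$ by the same telescoping argument (plus interpolation via Theorem \ref{t:interp-approx} when $q>1$), fold in the weight $|\tau|^{\theta/n}$, and feed the resulting bound into Theorem \ref{t:direct-std}. The paper's proof is actually terser than yours, saying only ``proceeding exactly as in the proof of Theorem \ref{t:direct-Lp}''; your write-up correctly supplies those details and in addition spells out the sub-additivity of $|\cdot|_{\bar A^\alpha_{q,q;d}}$ needed to invoke Theorem \ref{t:direct-std}.
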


\begin{proof}
Let $u\in\Leb{p}(\Omega)$ and let $P\in\tstP$.
Then with $\Pi_P$ the projection operator defined in \eqref{e:quasi-interpolator-disc} with $m:=d$, we have
\begin{equation}
\|u - \Pi_P u\|_{\Leb[\theta]{p}(\Omega)}
= \left( \sum_{\tau\in P} |\tau|^{\frac{\theta p}n} \|u - \Pi_{P} u\|_{\Leb{p}(\tau)}^p \right)^{\frac1p}
\lesssim \left( \sum_{\tau\in P} |\tau|^{\frac{\theta p}n} \inf_{v\in \bar S_P} \|u-v\|_{\Leb{p}(\tau)}^p \right)^{\frac1p} .
\end{equation}
Now proceeding exactly as in the proof of Theorem \ref{t:direct-Lp}, we get
\begin{equation}
\|u - \Pi_P u\|_{\Leb[\theta]{p}(\Omega)}
\lesssim \left( \sum_{\tau\in P} |\tau|^{\frac{\theta p}n} |\tau|^{\delta p} |u|_{\bar A^{\alpha}_{q,q;d}(\tau)} \right)^{\frac1p} ,
\end{equation}
with $\delta=\frac\alpha{n}+\frac1p-\frac1q$.
Then an application of Theorem \ref{t:direct-std} finishes the proof.
\end{proof}

\begin{figure}[ht]
\centering
\includegraphics[width=0.5\textwidth]{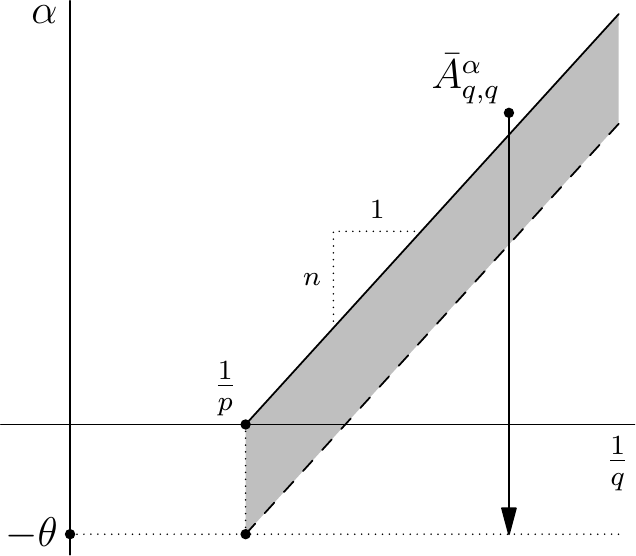}
\caption{Illustration of Theorem \ref{t:direct-Lp-disc} and Theorem \ref{t:inverse-Lp-disc}.
If the space $\bar A^\alpha_{q,q}$ is located above or on the solid line,
then $\bar A^\alpha_{q,q}\subset\bar\tstA^s(\Leb[\theta]{p})$ with $s=\frac{\alpha+\theta}n$.
It is as if the approximation is taking place in a space such as $B^{-\theta}_{p,p}$,
but instead of $\frac{\alpha+\theta}{n}>\frac1q-\frac1p$ (dashed line) we have the condition $\frac\alpha{n}\geq\frac1q-\frac1p$ (solid line).
On the other hand, the inverse embedding takes the form $\bar\tstA^s_{q}(\Leb[\theta]{p})\cap\Leb{p}\subset\bar A^{\alpha}_{q,q}$,
which holds on the part of the dashed line with $\alpha>0$.}
\label{f:direct-embeddings-disc}
\end{figure}

We close this section by stating an inverse embedding theorem (A proof can be found in arXiv version 1 of the current paper).

\begin{theorem}\label{t:inverse-Lp-disc}
Let $0<q\leq p<\infty$, $\alpha,\theta>0$, and let $s=\frac{\alpha+\theta}{n}=\frac1q-\frac1p$.
Then we have $\bar\tstA^s_{q;d}(\Leb[\theta]{p}(\Omega))\cap\Leb{p}(\Omega)\subset\bar A^{\alpha}_{q,q;d}(\Omega)$.
\end{theorem}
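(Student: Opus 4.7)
The plan is to emulate the classical Jackson--Bernstein strategy in the weighted adaptive setting. First, using the overlay operation \eqref{e:overlay} together with the monotonicity of the weighted norm under refinement (valid for $\theta \geq 0$ since the weights $|\tau|^{\theta p/n}$ shrink as elements subdivide), one may reduce to adaptive approximants $v_k \in \bar S^d_{P_k}$ on a nested sequence $P_0 \preceq P_1 \preceq \cdots$ with $\#P_k \lesssim 2^k$ and $(2^{ks}\|u-v_k\|_{\Leb[\theta]{p}(\Omega)})_k \in \leb{q}$. Since the best $\Leb[\theta]{p}$-approximation on a partition coincides with the element-wise best $\Leb{p}$-approximation (the weight being constant on each element), one may take $v_k = \Pi_{P_k} u$, obtaining the additional stability $\|v_k\|_{\Leb{p}} \lesssim \|u\|_{\Leb{p}}$ from the hypothesis $u \in \Leb{p}(\Omega)$. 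The telescoping increments $w_k := v_k - v_{k-1}$ then satisfy $\|w_k\|_{\Leb[\theta]{p}(\Omega)} \lesssim 2^{-ks}\tilde b_k$ with $(\tilde b_k) \in \leb{q}$.

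The core is a Bernstein-type inequality: since any $w \in \bar S^d_P$ is already polynomial on the elements of $P$ at shallow levels $[\tau] \leq j$ (and hence belongs to $\bar S_j$ there), only the deep elements contribute to the multilevel error,
\begin{equation*}
E(w, \bar S_j)_{\Leb{q}(\Omega)}^q \leq \sum_{\tau \in P,\, [\tau] > j} \|w\|_{\Leb{q}(\tau)}^q.
\end{equation*}
Combining $\|w\|_{\Leb{q}(\tau)} \leq |\tau|^{1/q-1/p}\|w\|_{\Leb{p}(\tau)}$ with H\"older's inequality against the weight $|\tau|^{\theta p/n}$ and exponents $(p/q,\, p/(p-q))$ brings in the weighted norm:
\begin{equation*}
E(w, \bar S_j)_{\Leb{q}(\Omega)}^q \lesssim \biggl(\sum_{\tau:[\tau]>j} |\tau|^{\alpha/(\alpha+\theta)}\biggr)^{(p-q)/p} \|w\|_{\Leb[\theta]{p}(\Omega)}^q.
\end{equation*}
Bounding the geometric sum by the two complementary estimates $\sum_{\tau:[\tau]>j}|\tau|^{\alpha/(\alpha+\theta)} \leq \#P \cdot \lambda^{-nj\alpha/(\alpha+\theta)}$ (from the size constraint $|\tau|\leq\lambda^{-nj}$ on deep elements) and $\sum_\tau |\tau|^{\alpha/(\alpha+\theta)} \leq (\#P)^{\theta/(\alpha+\theta)}|\Omega|^{\alpha/(\alpha+\theta)}$ (from a global discrete H\"older), then substituting $w = w_k$ with $\#P_k \lesssim 2^k$ and $\|w_k\|_{\Leb[\theta]{p}} \lesssim 2^{-ks}\tilde b_k$, yields the scale-separated bound
\begin{equation*}
E(w_k, \bar S_j)_{\Leb{q}(\Omega)} \lesssim \tilde b_k \cdot \min\bigl(\lambda^{-j\alpha},\, 2^{-k\alpha/n}\bigr).
\end{equation*}

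The final step is summation: the sub-additivity $E(u,\bar S_j)_{\Leb{q}}^{q^*} \leq \sum_k E(w_k, \bar S_j)_{\Leb{q}}^{q^*}$ with $q^* = \min(1,q)$, together with interchanging $\sum_j \lambda^{j\alpha q}$ and $\sum_k$ and applying the discrete Hardy inequality (Lemma \ref{e:Hardy-ineq-disc}) on either side of the crossover $k \sim nj\log_2\lambda$, transfers the $\leb{q}$-summability of $(\tilde b_k)_k$ into that of $(\lambda^{j\alpha}E(u,\bar S_j)_{\Leb{q}})_j$, which is precisely $u \in \bar A^\alpha_{q,q;d}(\Omega)$.

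The main obstacle is the logarithmic loss that a naive use of the $\min$-bound produces: summing directly gives only $E(u,\bar S_j)_{\Leb{q}} \lesssim j^{1/q^*}\lambda^{-j\alpha}$, which fails to be $\leb{q}$-summable against $\lambda^{j\alpha q}$. Removing this loss requires refining the Bernstein estimate by tracking the actual count $N_{>j}^k = \#\{\tau\in P_k:[\tau]>j\}$ of deep elements in $P_k$ (rather than the crude $\#P_k$), exploiting the nested structure of $\{P_k\}$ to control $\sum_k N_{>j}^k$, and employing both forms \eqref{e:Hardy-ineq-disc-1} and \eqref{e:Hardy-ineq-disc-2} of the discrete Hardy inequality to absorb the logarithm via the full $\leb{q}$-hypothesis on $(\tilde b_k)$ rather than its weaker $\leb{\infty}$-consequence.
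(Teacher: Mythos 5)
Your setup (overlay to a nested sequence, elementwise near-best projections, the telescoping increments, the deep-element Bernstein bound $E(w,\bar S_j)_{\Leb{q}}^q\leq\sum_{[\tau]>j}\|w\|_{\Leb{q}(\tau)}^q$, and the H\"older step that produces the exponent $\alpha/(\alpha+\theta)$) is all correct. The gap is in the final summation, and it is not a removable logarithm: the scale-separated bound $E(w_k,\bar S_j)_{\Leb{q}}\lesssim\tilde b_k\min(\lambda^{-j\alpha},2^{-k\alpha/n})$ is already too weak to close the argument, because for each fixed $k$ the terms $\lambda^{j\alpha q}E(w_k,\bar S_j)_{\Leb{q}}^q\lesssim\tilde b_k^q$ are \emph{constant} in $j$ once $j$ passes the crossover, so $\sum_j\lambda^{j\alpha q}\min(\cdot)^q=\infty$ for every $k$ with $\tilde b_k>0$. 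Concretely, the shallow-budget contribution to $\lambda^{j\alpha}E(u,\bar S_j)_{\Leb{q}}$ is $\bigl(\sum_{k\leq k_j}\tilde b_k^{q^*}\bigr)^{1/q^*}$, which increases to a positive constant and can never be $\leb{q}$-summable in $j$; Lemma \ref{e:Hardy-ineq-disc} in its second form requires a geometric factor $2^{-\theta'(j-k)}$ that your estimates do not supply. The proposed remedies do not produce it: for a strongly graded mesh (repeated refinement toward a point) one has $N^k_{>j}\sim\#P_k$ for all $j$ up to the maximal generation, so tracking the deep-element count gains nothing in the worst case. What must be done instead is to keep the sum over deep elements intact and interchange it with $\sum_j\lambda^{j\alpha q}$, which yields $\sum_j\lambda^{j\alpha q}E(w,\bar S_j)_{\Leb{q}}^q\lesssim\sum_{\tau\in P}\lambda^{[\tau]\alpha q}\|w\|_{\Leb{q}(\tau)}^q\lesssim(\#P)^{qs}\|w\|_{\Leb[\theta]{p}}^q$; the min-bound discards exactly the cancellation this interchange exploits.

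Even with that corrected Bernstein inequality, your assembly step only works for $q\leq1$, where $E(u,\bar S_j)^q\leq\sum_kE(w_k,\bar S_j)^q$ and a plain interchange of $\sum_j$ and $\sum_k$ finishes. For $q>1$ the inequality $\sum_j\bigl(\sum_{k\leq k_j}D_{k,j}^{1/q}\bigr)^q\lesssim\sum_k\sup\{\ldots\}$ fails at the level of sequences satisfying only $\sum_j\lambda^{j\alpha q}D_{k,j}\lesssim\tilde b_k^q$ (take many increments whose deep-element profiles all saturate at the same level $j$), so no purely sequence-space manipulation of the per-increment bounds can work. One has to use the extra structure of the discontinuous setting: $E(u,\bar S_j)_{\Leb{q}}^q$ localizes exactly as $\sum_{\sigma\in P_j}E(u,\Pol_d)_{\Leb{q}(\sigma)}^q$, and for each $\sigma$ one uses the last adaptive partition that still contains $\sigma$ in a single element, bounding $\lambda^{[\sigma]\alpha}E(u,\Pol_d)_{\Leb{q}(\sigma)}\lesssim|\sigma|^{\theta/n}\|u-v_k\|_{\Leb{p}(\sigma)}$; the resulting sum over the refinement forest is then controlled because $|\sigma|^{\theta/n}$ decays geometrically along nested chains of simplices --- this is precisely where the hypothesis $\theta>0$ enters, and your sketch never uses it in the summation. (The hypothesis $u\in\Leb{p}(\Omega)$ is likewise needed only for the finitely many simplices already broken by $T_0$, a point your outline also leaves implicit.)
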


%%%%%%%%%%%%%%%%%%%%%%%%%%%%%%%%%%%%%%%%%%%%%%%%%%%%%%%%%%%%%%%%%%%%%%%%%%%%%%
\section{Second order elliptic problems}
\label{s:2nd-order}

%%%%%%%%%%%%%%%%%%%%%%%%%%%%%%%%%%%%%%%%%%%%%%%%%%%%%%%%%%%%%%%%%%%%%%%%%%%%%%
\subsection{Introduction}

In this section, we will apply the abstract theory of Section \ref{s:general} to second order elliptic boundary value problems.
As far as the domain $\Omega$ and the family of triangulations $\tstP$ are concerned,
we will keep the setting of the previous section intact.
In particular, we fix a refinement rule, which is either the newest vertex bisection or the red refinement,
and assume that the family $\tstP$ satisfies the admissibility criterion \eqref{e:finite-support}.

Let $\Gamma\subset\partial\Omega$ be an open piece (or the whole) of the boundary, consisting of faces of the initial triangulation $P_0$.
For $P\in\tstP$, the space $S_P$ will be the Lagrange finite element space of continuous piecewise polynomials of degree not exceeding $m$,
with the homogeneous Dirichlet condition on $\Gamma$.
We also define $H^1_\Gamma(\R^n)$ as the closure of $\tstD(\R^n\setminus\overline\Gamma)$ in $H^1(\R^n)$,
and $H^1_\Gamma=H^1_\Gamma(\Omega)$ as the restriction of functions from $H^1_\Gamma(\R^n)$ to $\Omega$.
Note that $S_P=H^1_\Gamma\cap S^m_P$.

The operator $T$ is given as
\begin{equation}
Tu = - a_{ij} \partial_i \partial_j u + b_{k} \partial_k u + cu,
\end{equation}
where the repeated indices are summed over.
The coefficients $a_{ij}$ are Lipschitz continuous, and $b_{k}, c\in \Leb{\infty}(\Omega)$. %We set up a variational problem as follows.
The problem we consider is to find $u\in H^1_\Gamma$ satisfying
\begin{equation}\label{e:2nd-order-var-form}
\langle Tu, v\rangle  = \langle f, v\rangle,
\qquad\textrm{for all}\quad
v\in H^1_\Gamma.
\end{equation}
Here $\langle\cdot,\cdot\rangle$ is the duality pairing between $(H^1_\Gamma)'$ and $H^1_\Gamma$,
and $f\in \Leb{2}(\Omega) \hookrightarrow (H^1_\Gamma)'$ is given.
This is of course the variational formulation of the mixed Dirichlet-Neumann problem with the homogenous Dirichlet data on $\Gamma$.
We will also denote by $T:H^1_\Gamma\to(H^1_\Gamma)'$ the operator defined in \eqref{e:2nd-order-var-form}.

Given an $n$-simplex $\tau$, let us denote by $E_\tau$ the union of the $(n-1)$-dimensional open faces of $\tau$.
In other words, $E_\tau$ is the boundary of $\tau$ with all but the $(n-1)$-dimensional faces removed.
Then for $P\in\tstP$, let
\begin{equation}
E_P=\{E_\tau\cap E_\sigma\cap(\bar\Omega\setminus\Gamma):\tau,\sigma\in P\} ,
\end{equation}
be the set of faces not intersecting the Dirichlet piece $\Gamma$.
Note that if $P$ is nonconforming, then only the faces of the ``smaller'' simplices go into $E_P$.
Given $P\in\tstP$, $u\in T^{-1}(\Leb{2}(\Omega))$, and $v\in S_P$, define the {\em element residual} $r_\tau=(Tu-Tv)|_\tau$ for $\tau\in P$,
and the {\em edge residual} $r_e\in \Leb{2}(e)$ for $e\in E_P$ as the jump of the normal component of the vector field $a_{ij}\partial_jv$ across the edge $e$.
Finally, we define the {\em a posteriori} error estimator
\begin{equation}\label{e:2nd-order-err-est}
(\eta(u,v,P))^2 = \sum_{\tau\in P} h_\tau^{2}\|r_\tau\|_{\Leb{2}(\tau)}^2 + \sum_{e\in E_P} h_e \|r_e\|_{\Leb{2}(e)}^2 .
\end{equation}
A typical adaptive finite element method that uses \eqref{e:2nd-order-err-est} as its error indicator converges optimally
with respect to the approximation classes $\tstA^s(\eta)$,
in the sense that if the solution $u$ of the problem \eqref{e:2nd-order-var-form} satisfies $u\in\tstA^s(\eta)$ for some $s>0$,
then the adaptive method reduces the quantity $\eta(u,u_P,P)$ with the rate $s$, where $u_P$ is the Galerkin approximation of $u$ from $S_P$,
cf. \cite*{FFP14}.
Moreover, it is well known that the estimator \eqref{e:2nd-order-err-est} is equivalent to the {\em total error}
\begin{equation}\label{e:2nd-order-tot-err}
(\rho_d(u,v,P))^2 = \|u-v\|_{H^{1}}^2 + (\osc_d(u,v,P))^2,
\end{equation}
when $v=u_P$ and for any fixed $d\geq m-2$,
where the {\em oscillation} is defined as
\begin{equation}
(\osc_d(u,v,P))^2 = \sum_{\tau\in P} h_\tau^{2}\|(1-\Pi_\tau)r_\tau\|_{\Leb{2}(\tau)}^2 + \sum_{e\in E_P} h_e \|(1-\Pi_e) r_e\|_{\Leb{2}(e)}^2 ,
\end{equation}
with $\Pi_\tau:\Leb{2}(\tau)\to\Pol_d$ and $\Pi_e:\Leb{2}(e)\to\Pol_{d+1}$ being $\Leb{2}$-orthogonal projections onto polynomial spaces,
see e.g., \cite*{NSV09}.
Optimality of adaptive finite element methods with respect to the approximation classes $\tstA^s(\rho_d)$ has also been proved, cf. \cite{CKNS08}.
Ideally, one would like to have optimality with respect to the classes $\tstA^s(H^1_\Gamma)$ that correspond to the energy error.
In particular, it is conceivable that for certain functions $u$, the energy error $\|u-u_P\|_{H^1}$ decays faster than the oscillation 
$\osc_d(u,u_P,P)$, so that the class $\tstA^s(H^1_\Gamma)$ is strictly larger than both $\tstA^s(\rho_d)$ and $\tstA^s(\eta)$.
However, if the error estimator \eqref{e:2nd-order-err-est} is the only source of information used by the algorithm in its stopping criterion
(or in the marking of triangles for refinement),
then it is clear that one has to reduce the oscillation anyway.
It appears therefore that the approximation classes $\tstA^s(\rho_d)$ and $\tstA^s(\eta)$ are completely natural  from the perspective of adaptive finite element methods.

%From \eqref{e:2nd-order-tot-err} and the equivalence $\eta\eqsim\rho$ we see that $\tstA^s(\rho)\subset\tstA^s(H^1_\Gamma)$ and $\tstA^s(\eta)\subset\tstA^s(H^1_\Gamma)$.

%%%%%%%%%%%%%%%%%%%%%%%%%%%%%%%%%%%%%%%%%%%%%%%%%%%%%%%%%%%%%%%%%%%%%%%%%%%%%%
\subsection{A characterization of adaptive approximation classes}

In this subsection, we give necessary and sufficient conditions for $u\in H^1_\Gamma$ to be in $\tstA^s(\rho_d)$.
These conditions will be in terms of memberships of $u$ and $Tu$ into suitable approximation classes,
which, in light of the preceding section, are related to Besov spaces.
The coefficients of $T$ are required to satisfy conditions of the form $g\in\bar\tstA^s_{\infty;d}(\Leb[\theta]{\infty}(\Omega))$,
where the latter space is defined in \eqref{e:adapt-class-disc},
and again these spaces can be cast in terms of Besov spaces with the help of Theorem \ref{t:direct-Lp-disc} and Theorem \ref{t:multilevel-Besov-disc}.
%In particular, we have $\bar A^\alpha_{\infty,\infty;d}(\Omega)\subset\bar\tstA^s_{\infty;d}(\Leb[\theta]{\infty}(\Omega)$ for $\alpha=sn-\theta$, and $C^{k,\mu}(\Omega)\subset\bar A^\alpha_{\infty,\infty;d}(\Omega)$ for $\alpha\leq d+1$, with $k\in\N_0$ and $0<\mu\leq1$ satisfying $k+\mu=\alpha$, where $C^{k,\mu}(\Omega)=B^\alpha_{\infty,\infty;d+1}(\Omega)$ are the usual H\"older spaces.

\begin{theorem}\label{t:elliptic-2nd}
Let $s>0$ and let $d\geq m-2$. 
Assume that $a_{ij}\in\bar\tstA^s_{\infty;d+2-m}(\Leb{\infty}(\Omega))$, $b_{i}\in\bar\tstA^s_{\infty;d+1-m}(\Leb[1]{\infty}(\Omega))$, and $c\in\bar\tstA^s_{\infty;d-m}(\Leb[2]{\infty}(\Omega))$.
Then we have
\begin{equation}
\tstA^{s}(\rho_d) = \tstA^{s}(H^1_\Gamma)\cap T^{-1}(\bar\tstA^s_{\infty;d}(\Leb[1]{2}(\Omega))) .
\end{equation}
\end{theorem}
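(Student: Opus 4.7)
The strategy is to prove the two inclusions separately, using the overlay operation \eqref{e:overlay}--\eqref{e:overlay-rho} together with the abstract framework of Theorem \ref{t:direct} and the remark following it.

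The easy inclusion $\tstA^s(\rho_d)\subset\tstA^s(H^1_\Gamma)$ is immediate from $\|u-v\|_{H^1}\leq\rho_d(u,v,P)$. To obtain $\tstA^s(\rho_d)\subset T^{-1}(\bar\tstA^s_{\infty;d}(\Leb[1]{2}(\Omega)))$, given $u\in\tstA^s(\rho_d)$, for each $N$ I take $P\in\tstP$ with $\#P\lesssim N$ and $v\in S_P$ realising $\rho_d(u,v,P)\lesssim N^{-s}$ (for concreteness I choose $v$ to be the Galerkin approximation so that the element residual bound $\sum h_\tau^2\|Tu-Tv\|_{\Leb{2}(\tau)}^2\lesssim\rho_d(u,v,P)^2$ is available through the standard efficiency of the a posteriori estimator). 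I then overlay with three partitions on which the coefficient approximants $\tilde a,\tilde b,\tilde c$ of degrees $d+2-m,d+1-m,d-m$ achieve the target rate in their respective weighted $\Leb{\infty}$ norms, and define $\tilde f:=\tilde T v+\pi_d(Tu-Tv)$, where $\tilde T v=-\tilde a_{ij}\partial_i\partial_j v+\tilde b_k\partial_k v+\tilde c v$. On the overlay $\tilde f$ is piecewise polynomial of degree $d$, and the splitting $Tu-\tilde f=(1-\pi_d)(Tu-Tv)+(Tv-\tilde T v)$ reduces the task to bounding the oscillation piece (via $\rho_d$ control and monotonicity of oscillation under refinement) and three coefficient contributions, each handled by standard inverse estimates on the piecewise polynomial $v$.

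The reverse inclusion is the main content and is an application of the extension of Theorem \ref{t:direct} described in the remark after its proof, with $X_0=H^1_\Gamma$ and with $Tu$ living in the auxiliary class $\bar\tstA^s_{\infty;d}(\Leb[1]{2}(\Omega))$. Given $u\in\tstA^s(H^1_\Gamma)$ with $Tu\in\bar\tstA^s_{\infty;d}(\Leb[1]{2}(\Omega))$, I form, for each $N$, the overlay $P$ of five partitions: one realising the $H^1$ rate for $u$, one realising the $\Leb[1]{2}$ rate for $Tu$ with polynomial $p_2\in\bar S^d_{P_2}$, and three for the coefficients. The quasi-interpolant $v=\tilde Q_Pu$ inherits the $H^1$ rate via Lemma \ref{l:quasi-interp-best-Lagrange}. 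For the element part of $\osc_d(u,v,P)$, on each $\tau\in P$ I test $(1-\Pi_\tau)(Tu-Tv)$ against the candidate polynomial $p_2|_\tau-\tilde T v|_\tau\in\Pol_d$; this splits the bound into $\|Tu-p_2\|_{\Leb[1]{2}(P)}$ and $\|Tv-\tilde T v\|_{\Leb[1]{2}(P)}$, the first of which is $\lesssim\|Tu-p_2\|_{\Leb[1]{2}(P_2)}\lesssim N^{-s}$ by monotonicity of the weighted norm under refinement, while the second is handled as in the forward direction. The edge part of $\osc_d$ is treated by a scaled trace inequality that exchanges $h_e^{1/2}\|(1-\Pi_e)r_e\|_{\Leb{2}(e)}$ for a volume term in $h_\tau\|(1-\Pi_\tau)\mathrm{div}(a\partial v)\|_{\Leb{2}(\tau)}$, which is then absorbed into the element-oscillation analysis.

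The principal technical obstacle is reconciling the three distinct elementwise weightings $h_\tau^0,h_\tau^1,h_\tau^2$ carried by $a,b,c$ with the single weight $h_\tau$ appearing in $\Leb[1]{2}$: for $a$ one uses the inverse estimate $h_\tau\|\partial^2 v\|_{\Leb{2}(\tau)}\lesssim\|\partial v\|_{\Leb{2}(\tau)}$; for $b$ one converts $h_\tau\|b-\tilde b\|_{\Leb{\infty}(\tau)}\leq\|b-\tilde b\|_{\Leb[1]{\infty}}$ into a uniform factor pulled out of the sum; the $c$ contribution is the most delicate, and requires combining the $\ell^\infty$ structure of $c\in\bar\tstA^s_{\infty;d-m}(\Leb[2]{\infty})$ with uniform boundedness of $\|v\|_{H^1}$ along the approximating sequence. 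A secondary but nontrivial issue is the edge oscillation, whose scaled-trace analysis produces constants that are uniform across $\tstP$ only because of the admissibility condition \eqref{e:finite-support}.
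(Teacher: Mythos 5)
Your overall architecture coincides with the paper's: both inclusions are reduced to a single elementwise estimate comparing $\rho_d(u,v,P)$ with $E(u,S_P)_{H^1(\Omega)}+E(Tu,\bar S^d_P)_{\Leb[1]{2}(\Omega)}$ plus coefficient-approximation errors multiplied by $|v|_{H^1(\Omega)}^2$ (this is exactly the content of Lemma \ref{l:elliptic-2nd-direct} and Lemma \ref{l:elliptic-2nd-inverse}), followed by overlays of the partitions realizing the several rates and the uniform bound $\|v_k\|_{H^1}\lesssim\|u\|_{H^1}$ along the approximating sequence. Your weight bookkeeping for $a_{ij}$, $b_k$, $c$ (inverse inequality for the second derivatives, pulling the $\Leb[1]{\infty}$ and $\Leb[2]{\infty}$ factors out of the sum, Poincar\'e for the zeroth-order term) is the paper's argument. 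Two steps, however, do not go through as written. First, in the hard inclusion you take $v=\tilde Q_Pu$: that operator is built for $\Leb{p}$ near-best approximation from $S^m_P$ \emph{without} boundary conditions, so it neither lands in $S_P=H^1_\Gamma\cap S^m_P$ nor is it known from Lemma \ref{l:quasi-interp-best-Lagrange} to be near-best in $H^1$. The paper uses the Scott--Zhang interpolator adapted to the Dirichlet condition on $\Gamma$, whose $H^1$-near-best-approximation property and $H^1$-stability are what feed both the first term of \eqref{e:elliptic-2nd-direct} and the inverse estimates applied to $v$.

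Second, and more substantively, your edge-oscillation step is not a valid inequality. There is no scaled trace inequality bounding $h_e^{1/2}\|(1-\Pi_e)r_e\|_{\Leb{2}(e)}$ by $h_\tau\|(1-\Pi_\tau)\mathrm{div}(a\nabla v)\|_{\Leb{2}(\tau)}$: the jump of $a_{ij}\partial_jv$ across a face is independent of the interior divergence (for nonpolynomial $a$ the edge trace of $a$ and the volume quantity $\partial_ia_{ij}\partial_jv+a_{ij}\partial_i\partial_jv$ are simply different objects, and bubble-function arguments run in the opposite direction). The correct treatment, as in \eqref{e:elliptic-2nd-direct-edge}, is to subtract a polynomial approximant $\bar a_{ij}\in\Pol_{d+2-m}$ so that $(1-\Pi_e)$ annihilates $\bar a_{ij}\partial_jv$ (a polynomial of degree at most $d+1$ on $e$), and then apply a discrete trace/inverse inequality to the piecewise polynomial $\partial_jv$, producing $h_e^{-1/2}\|a_{ij}-\bar a_{ij}\|_{\Leb{\infty}(\tau)}\|\nabla v\|_{\Leb{2}(\tau)}$ and hence the same contribution as the element term \eqref{e:elliptic-2nd-direct-aij}; the admissibility condition \eqref{e:finite-support} is only needed for uniformity of these constants, as you note. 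Finally, the Galerkin/efficiency detour in the easy direction is unnecessary: the element oscillation $h_\tau\|(1-\Pi_\tau)(Tu-Tv)\|_{\Leb{2}(\tau)}$ is by definition a summand of $\osc_d(u,v,P)\leq\rho_d(u,v,P)$ for \emph{any} $v\in S_P$, so the near-minimizer of $\rho_d$ suffices and no appeal to a posteriori efficiency is needed.
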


The proof of this theorem will be given below in Lemma \ref{l:elliptic-2nd-direct} and Lemma \ref{l:elliptic-2nd-inverse}.
Before proving those lemmata, let us make a few points on the conditions of the theorem.

First, recall from Theorem \ref{t:direct-Lp-disc} that $\bar A^{\sigma}_{q,q;d}(\Omega)\subset\bar\tstA^s_{\infty;d}(\Leb[1]{2}(\Omega))$
for $\sigma=sn-1$ and $0\leq\frac1q - \frac12\leq\frac\sigma{n}$, 
and from Theorem \ref{t:multilevel-Besov-disc} that $B^{\sigma}_{q,q}(\Omega)\subset\bar A^{\sigma}_{q,q;d}(\Omega)$ for $\sigma<d+\max\{1,\frac1q\}$.

Second, while the approximation classes $\tstA^{s}(H^1_\Gamma)$ are associated to the finite element spaces $S_P=H^1_\Gamma\cap S^m_P$,
the approximation classes we considered in the preceding section are associated to the spaces $S^m_P$ with no boundary conditions.
In view of applying Theorem \ref{t:direct-App} and Theorem \ref{t:multilevel-Besov}, we need the latter type of approximation classes.
The following lemma provides a link between the two types.

\begin{lemma}
For $s>0$ we have
\begin{equation}\label{e:H1G}
\tstA^{s}(H^1_\Gamma)\equiv\tstA^{s}(H^1,\tstP,\{H^1_\Gamma\cap S^m_P\})=H^1_\Gamma\cap\tstA^{s}(A^1_{2,2}(\Omega),\tstP,\{S^m_P\}) .
\end{equation}
In particular, we have $H^1_\Gamma\cap A^\alpha_{p,p}(\Omega)\subset\tstA^{s}(H^1_\Gamma)$ for $\alpha=sn+1$ and $\frac1p<s+\frac12$.
\end{lemma}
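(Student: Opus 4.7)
The plan is to reduce the main equality to two comparison statements: (a) $A^1_{2,2}(\Omega)$ and $H^1(\Omega)$ coincide with equivalent quasi-norms, so the error measures $\|\cdot\|_{A^1_{2,2}}$ and $\|\cdot\|_{H^1}$ are interchangeable; and (b) for any $u\in H^1_\Gamma$ and $P\in\tstP$, the best $H^1$-approximation from $S^m_P$ is equivalent, up to a constant independent of $P$, to the best $H^1$-approximation from its subspace $H^1_\Gamma\cap S^m_P$. Combining (a) and (b) yields the main equality, and the ``in particular'' claim then follows from Theorem \ref{t:direct-App}.

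For part (a) I would apply Theorem \ref{t:multilevel-Besov} with $\alpha=1$ and $p=q=2$: since $1<1+\frac12$, both the direct and inverse embeddings hold, so $A^1_{2,2;m}(\Omega)=B^1_{2,2;m+1}(\Omega)$ with equivalent quasi-norms. For $m\geq 1$ the latter coincides with the classical Besov space $B^1_{2,2}(\Omega)=H^1(\Omega)$. Consequently $\|u-v\|_{A^1_{2,2}}\sim\|u-v\|_{H^1}$ for every $u\in H^1(\Omega)$ and $v\in S^m_P$, whence $\tstA^s(A^1_{2,2}(\Omega),\tstP,\{S^m_P\})=\tstA^s(H^1(\Omega),\tstP,\{S^m_P\})$ as sets.

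For part (b) I would invoke a Scott--Zhang-type quasi-interpolator $I_P\colon H^1(\Omega)\to S^m_P$ that is (i) a projection onto $S^m_P$, (ii) $H^1$-bounded uniformly in $P\in\tstP$, and (iii) constructed so that the dual functional attached to each node lying on $\Gamma$ is supported on an $(n-1)$-dimensional face contained in $\Gamma$. Property (iii) ensures that whenever $u\in H^1_\Gamma$, the vanishing trace of $u$ on $\Gamma$ forces $I_Pu$ to have zero nodal values at every node on $\Gamma$, so $I_Pu\in H^1_\Gamma\cap S^m_P$. Properties (i)--(ii) yield the standard near-best estimate $\|u-I_Pu\|_{H^1}\lesssim\inf_{v\in S^m_P}\|u-v\|_{H^1}$, via $u-I_Pu=(u-v)-I_P(u-v)$ for any $v\in S^m_P$. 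Combined with (iii) this gives $\inf_{w\in H^1_\Gamma\cap S^m_P}\|u-w\|_{H^1}\lesssim\inf_{v\in S^m_P}\|u-v\|_{H^1}$ for $u\in H^1_\Gamma$; the reverse inequality is trivial.

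For the ``in particular'' assertion I would apply Theorem \ref{t:direct-App} with $\sigma=1$, the theorem's $p$ set to $2$, and its $q$ set to the lemma's $p$: the hypotheses $p\leq2$, $p<\infty$, and $\frac{\alpha-1}{n}+\frac12-\frac1p>0$ become precisely $s>0$ and $\frac1p<s+\frac12$, yielding $A^\alpha_{p,p}(\Omega)\hookrightarrow\tstA^s(A^1_{2,2}(\Omega))$; intersecting with $H^1_\Gamma$ and invoking the main equality completes the argument. The main technical obstacle is the uniform construction in part (b): a Scott--Zhang-type operator that is simultaneously a bounded projection onto $S^m_P$ and preserves the Dirichlet trace on $\Gamma$, with constants independent of $P$ across the possibly nonconforming families $\tstP$ admitted by \eqref{e:finite-support}. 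The construction is standard in the conforming Lagrange setting and can be adapted to the present one using the nodal and dual-basis machinery introduced in Section \ref{ss:multilevel}.
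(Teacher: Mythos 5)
Your proposal is correct and follows essentially the same route as the paper: the Scott--Zhang interpolator adapted to the Dirichlet condition on $\Gamma$ gives the near-best approximation from $H^1_\Gamma\cap S^m_P$ in terms of the unconstrained best approximation, the identification $H^1=B^1_{2,2}=A^1_{2,2}$ comes from Theorem \ref{t:multilevel-Besov}, and the ``in particular'' claim is a direct application of Theorem \ref{t:direct-App}. You are in fact slightly more careful than the paper in spelling out the parameter bookkeeping (noting the implicit restriction $p\leq2$ inherited from the hypothesis $q\leq p$ of Theorem \ref{t:direct-App}) and the uniformity requirements on the quasi-interpolator over nonconforming $\tstP$.
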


\begin{proof}
Let $u\in H^1_\Gamma$, and let $u_P\in H^1_\Gamma\cap  S^m_P$ be the Scott-Zhang interpolator of $u$ adapted to the Dirichlet boundary condition on $\Gamma$, cf. \cite{SZ90}.
We have
\begin{equation}
\inf_{v\in H^1_\Gamma\cap S^m_P} \|u-v\|_{H^1(\Omega)}
\leq \|u-u_P\|_{H^1(\Omega)}
\lesssim \inf_{v\in S^m_P} \|u-v\|_{H^1(\Omega)} ,
\end{equation}
by standard properties of the Scott-Zhang interpolator.
Since $H^1=B^1_{2,2}=A^1_{2,2}$ by Theorem \ref{t:multilevel-Besov}, 
this implies \eqref{e:H1G}.
Then the second assertion of the theorem follows from a direct application of Theorem \ref{t:direct-App}.
\end{proof}

The inclusion $\tstA^{s}(H^1_\Gamma)\cap T^{-1}(\bar\tstA^s_{\infty;d}(\Leb[1]{2}(\Omega)))\subset\tstA^{s}(\rho_d)$
of Theorem \ref{t:elliptic-2nd} is a consequence of the following lemma.

\begin{lemma}\label{l:elliptic-2nd-direct}
For $u\in H^1_\Gamma$ and $P\in\tstP$, there exists $v\in S_P$ such that
\begin{equation}\label{e:elliptic-2nd-direct}
\begin{split}
&\rho_d(u,v,P)^2 
\lesssim E(u,S_P)_{H^1(\Omega)}^2 
+ E(Tu,\bar S^d_P)_{\Leb[1]{2}(\Omega)}^2 \\
&\quad + 
\left( E(a_{ij},\bar S^{d+2-m}_P)_{\Leb{\infty}(\Omega)}^2 
+ E(b_{i},\bar S^{d+1-m}_P)_{\Leb[1]{\infty}(\Omega)}^2 
+ E(c,\bar S^{d-m}_P)_{\Leb[2]{\infty}(\Omega)}^2 \right) 
|u|_{H^1(\Omega)}^2 .
\end{split}
\end{equation}
\end{lemma}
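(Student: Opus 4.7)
I would set $v := I_P^{SZ} u$, the Scott-Zhang interpolant of $u$ respecting the Dirichlet condition on $\Gamma$, and control the two pieces of $\rho_d(u,v,P)^2 = \|u-v\|_{H^1}^2 + \osc_d(u,v,P)^2$ separately. The energy error is direct: the standard local estimate $\|u - v\|_{H^1(\tau)} \lesssim \inf_{w \in S_P}\|u - w\|_{H^1(\hat\tau)}$, combined with the local finiteness of the support extensions $\{\hat\tau\}$, delivers $\|u - v\|_{H^1(\Omega)} \lesssim E(u, S_P)_{H^1(\Omega)}$, and the global stability $\|v\|_{H^1(\Omega)} \lesssim \|u\|_{H^1(\Omega)}$ will be used below.

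For the oscillation, the key observation is that $\Pi_\tau$ and $\Pi_e$ are the $\Leb{2}$-orthogonal projections onto $\Pol_d$ and $\Pol_{d+1}$, so it suffices, on each $\tau$ and each $e$, to exhibit polynomials of those degrees that approximate the residuals $r_\tau$ and $r_e$ well. On each $\tau$ I take the approximating polynomial as $g - q$ with $g|_\tau \in \Pol_d$ approximating $Tu$ and $q \in \Pol_d$ approximating $Tv$. Choosing $g \in \bar S^d_P$ to be a best $\bar S^d_P$-approximation of $Tu$ gives $\sum_\tau h_\tau^2 \|Tu - g\|_{\Leb{2}(\tau)}^2 = E(Tu, \bar S^d_P)_{\Leb[1]{2}(\Omega)}^2$. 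For $q$, I exploit that $\partial_i\partial_j v \in \Pol_{m-2}$, $\partial_k v \in \Pol_{m-1}$, $v \in \Pol_m$ on $\tau$: with $\tilde a_{ij}, \tilde b_k, \tilde c$ the best elementwise $\Leb{\infty}$-approximations of $a, b, c$ from $\bar S^{d+2-m}_P$, $\bar S^{d+1-m}_P$, $\bar S^{d-m}_P$ respectively (taking $\bar S^k_P = \{0\}$ for $k<0$), the choice $q := -\tilde a_{ij}\partial_i\partial_j v + \tilde b_k \partial_k v + \tilde c v$ lies in $\Pol_d$ by degree counting, and $\|Tv - q\|_{\Leb{2}(\tau)}$ splits into three pieces bounded by $\|a-\tilde a\|_{\Leb{\infty}(\tau)}\|D^2 v\|_{\Leb{2}(\tau)}$, $\|b-\tilde b\|_{\Leb{\infty}(\tau)}|v|_{H^1(\tau)}$, and $\|c-\tilde c\|_{\Leb{\infty}(\tau)}\|v\|_{\Leb{2}(\tau)}$.

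Applying the inverse inequality $\|D^2 v\|_{\Leb{2}(\tau)} \lesssim h_\tau^{-1} |v|_{H^1(\tau)}$ to the first term, multiplying through by $h_\tau^2$ and summing over $\tau$, the first two pieces immediately deliver $\|a-\tilde a\|_{\Leb{\infty}(\Omega)}^2$ and $\|b-\tilde b\|_{\Leb[1]{\infty}(\Omega)}^2$, each times $|v|_{H^1(\Omega)}^2$. The zeroth-order term is the most delicate and requires extracting an extra factor of $h_\tau$ out of $\|v\|_{\Leb{2}(\tau)}$: this I would do by the splitting $v = \bar v_\tau + (v - \bar v_\tau)$ combined with Poincar\'e's $\|v-\bar v_\tau\|_{\Leb{2}(\tau)}\lesssim h_\tau|v|_{H^1(\tau)}$, together with a further correction of $q$ by a polynomial of the form $\bar v_\tau(\tilde c' - \tilde c) \in \Pol_d$ (with $\tilde c' \in \Pol_d$ absorbing the $\Pi_\tau$-image of $\bar v_\tau(c-\tilde c)$), so as to pair the needed $h_\tau$ with $\|c-\tilde c\|_{\Leb{\infty}(\tau)}$ and produce $\|c-\tilde c\|_{\Leb[2]{\infty}(\Omega)}^2|v|_{H^1(\Omega)}^2$ after summation. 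The edge residuals follow the same recipe: subtracting from $r_e$ the jump of $\tilde a_{ij}\partial_j v\, n_i$ (which lies in $\Pol_{d+1}$ on $e$ by degree counting) leaves the jump of $(a-\tilde a)_{ij}\partial_j v\, n_i$, which the trace plus inverse estimates ($\|\partial_j v\|_{\Leb{2}(e)} \lesssim h_\tau^{-1/2}|v|_{H^1(\tau)}$, $h_e \sim h_\tau$) bound by a summed contribution $\|a-\tilde a\|_{\Leb{\infty}(\Omega)}^2|v|_{H^1(\Omega)}^2$. The main obstacle is precisely this $c$-piece manipulation, aligning the power of $h_\tau$ with $\Leb[2]{\infty}$ rather than the easier $\Leb[1]{\infty}$, together with careful degree counting in the low-degree cases $d \in \{m-2, m-1\}$ where $\bar S^{d-m}_P$ degenerates; minimizing separately over $\tilde a, \tilde b, \tilde c$ (trivially possible, as $\bar S^k_P$ has no cross-element continuity) then converts the coefficient norms into the stated best-approximation errors of \eqref{e:elliptic-2nd-direct}, and Scott-Zhang stability closes the argument.
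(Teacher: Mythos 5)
Your proposal is correct and follows essentially the same route as the paper's proof: Scott--Zhang interpolation for the energy term, the triangle inequality splitting $(1-\Pi_\tau)(Tu-Tv)$ into a data part yielding $E(Tu,\bar S^d_P)_{\Leb[1]{2}}$ and a coefficient part handled by subtracting elementwise best polynomial approximations of $a_{ij},b_k,c$, with the inverse inequality for the second-order term, Poincar\'e for the zeroth-order term, and trace plus inverse estimates for the edge jumps. Your explicit attention to the constant part $\bar v_\tau(c-\tilde c)$ in the zeroth-order term is in fact more careful than the paper, which simply writes $(1-\Pi_\tau)(c-\bar c)v=(1-\Pi_\tau)(c-\bar c)(v-\bar v)$ at that point; neither treatment fully spells out how that constant contribution is paired with the $h_\tau^2$-weight of $\Leb[2]{\infty}$, but this is a shared fine point rather than a divergence of method.
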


\begin{proof}%[Proof of Lemma \ref{l:elliptic-2nd-direct}]
We take $v$ to be the Scott-Zhang interpolator of $u$ adapted to the Dirichlet boundary condition on $\Gamma$, cf. \cite{SZ90}.
We have
\begin{equation}
\|u-v\|_{H^1} \lesssim \inf_{w\in S_P} \|u-w\|_{H^1},
\end{equation}
for all $P\in\tstP$.
It remains to bound the oscillation term.

First, let us consider the special case where the coefficients of $T$ are piecewise polynomials subordinate to $P$.
More specifically, assume that $a_{ij}|_\tau\in\Pol_{d+2-m}$, $b_{i}|_\tau\in\Pol_{d+1-m}$, and $c|_\tau\in\Pol_{d-m}$ for each $\tau\in P$.
In this case, the oscillations associated to edges vanish, because the edge residuals $r_e$ are polynomials of degree not exceeding $d+1$.
For the element residuals, with the shorthand $f=Tu$, we have
\begin{equation}\label{e:element-residual-triangle}
\|(1-\Pi_\tau)(f-Tv)\|_{\Leb{2}(\tau)} \leq \|(1-\Pi_\tau)f\|_{\Leb{2}(\tau)} + \|(1-\Pi_\tau)Tv\|_{\Leb{2}(\tau)},
\end{equation}
and the last term is zero because $Tv\in \Pol_{d}$.
The remaining term gives rise to
\begin{equation}
\sum_{\tau\in P}h_\tau^2\|(1-\Pi_\tau)f\|_{\Leb{2}(\tau)}^2 = E(f,\bar S^d_P)_{\Leb[1]{2}(\Omega)}^2 ,
\end{equation}
which yields the desired result.

In the general case, the edge residuals and the terms $Tv|_\tau$ can be nonpolynomial.
Let us treat $Tv|_\tau=- a_{ij} \partial_i \partial_j v + b_{k} \partial_k v + c v$ term by term.
We have
\begin{equation}
(1-\Pi_\tau)a_{ij}\partial_i\partial_jv = (1-\Pi_\tau)(a_{ij} - \bar a_{ij})\partial_i\partial_jv,
\qquad \bar a_{ij} \in \Pol_{d+2-m} ,
\end{equation}
which implies
\begin{equation}
\|(1-\Pi_\tau)a_{ij}\partial_i\partial_jv \|_{\Leb{2}(\tau)} 
= \|(a_{ij} - \bar a_{ij})\partial_i\partial_jv \|_{\Leb{2}(\tau)}
\leq \|a_{ij} - \bar a_{ij}\|_{\Leb{\infty}(\tau)} \|\partial_i\partial_jv \|_{\Leb{2}(\tau)} ,
\end{equation}
for any $\bar a_{ij} \in \Pol_{d+2-m}$.
Now we think of $\bar a_{ij}$ as a function in $\bar S^{d+2-m}_P$ that approximates $a_{ij}$ in each element $\tau\in P$ with the best $\Leb{\infty}(\tau)$-error.
As a result, we get
\begin{equation}\label{e:elliptic-2nd-direct-aij}
\begin{split}
\sum_{\tau\in P} h_\tau^2 \|(1-\Pi_\tau)a_{ij}\partial_i\partial_jv \|_{\Leb{2}(\tau)}^2 
&\leq \sum_{\tau\in P} h_\tau^2 \|a_{ij} - \bar a_{ij}\|_{\Leb{\infty}(\tau)}^2 \|\partial_i\partial_jv \|_{\Leb{2}(\tau)}^2 \\
&\leq E(a_{ij},\bar S^{d+2-m}_P)_{\Leb{\infty}(\Omega)}^2 \sum_{\tau\in P} h_\tau^2 \|\partial_i\partial_jv \|_{\Leb{2}(\tau)}^2 \\
&\lesssim E(a_{ij},\bar S^{d+2-m}_P)_{\Leb{\infty}(\Omega)}^2 \|\nabla v \|_{\Leb{2}(\Omega)}^2 ,
\end{split}
\end{equation}
where we have used an inverse inequality in the last step.
In light of the $H^1$-stability of the Scott-Zhang projector,
this is one of the terms in the right hand side of \eqref{e:elliptic-2nd-direct}.

Similarly, let $\bar c$ be a function in $\bar S^{d-m}_P$ that approximates $c$ in each element $\tau\in P$ with the best $\Leb{\infty}(\tau)$-error.
Then we have
\begin{equation}\label{e:elliptic-2nd-direct-c}
(1-\Pi_\tau) c v = (1-\Pi_\tau) (c - \bar c) v = (1-\Pi_\tau) (c - \bar c) (v - \bar v),
\end{equation}
in each $\tau\in P$, where $\bar v$ is the average of $v$ over $\tau$.
This yields
\begin{equation}
\begin{split}
\sum_{\tau\in P} h_\tau^2 \|(1-\Pi_\tau)cv\|_{\Leb{2}(\tau)}^2 
&\leq \sum_{\tau\in P} h_\tau^2 \|c - \bar c\|_{\Leb{\infty}(\tau)}^2 \|v - \bar v\|_{\Leb{2}(\tau)}^2 \\
&\lesssim \sum_{\tau\in P} h_\tau^4 \|c - \bar c\|_{\Leb{\infty}(\tau)}^2 \|\nabla v\|_{\Leb{2}(\tau)}^2 \\
&\leq E(c,\bar S^{d-m}_P)_{\Leb[2]{\infty}(\Omega)}^2 \|\nabla v\|_{\Leb{2}(\Omega)}^2 ,
\end{split}
\end{equation}
where we have used the Poincar\'e inequality in the second line.
Estimation of the term involving $b_i\partial_iv$ is more straightforward, which we omit.

As for the edge oscillations, let $\tau\in P$, and let $e$ be an edge of $\tau$.
Then we have
\begin{equation}\label{e:elliptic-2nd-direct-edge}
\begin{split}
\|(1-\Pi_e)a_{ij}\partial_jv\|_{\Leb{2}(e)} 
&= \|(1-\Pi_e)(a_{ij}-\bar a_{ij})\partial_jv\|_{\Leb{2}(e)} \\
&\leq \|(a_{ij}-\bar a_{ij})\partial_jv\|_{\Leb{2}(e)} \\
&\leq \|a_{ij}-\bar a_{ij}\|_{\Leb{\infty}(e)} \|\partial_jv\|_{\Leb{2}(e)} \\
&\lesssim h_e^{-\frac12} \|a_{ij}-\bar a_{ij}\|_{\Leb{\infty}(\tau)} \|\nabla v\|_{\Leb{2}(\tau)} ,
\end{split}
\end{equation}
for any $\bar a_{ij}\in\Pol_{d+2-m}$,
which shows that 
the contribution of the edge oscillations to the final estimate \eqref{e:elliptic-2nd-direct}
is identical to that of \eqref{e:elliptic-2nd-direct-aij}.
\end{proof}

\begin{remark}
By using the fact that the Scott-Zhang projector is bounded in $H^t(\Omega)$ for $t<\frac32$,
we could have introduced extra powers of $h_\tau$ or $h_e$ into the estimates \eqref{e:elliptic-2nd-direct-aij}, \eqref{e:elliptic-2nd-direct-c}, and \eqref{e:elliptic-2nd-direct-edge}.
This means that the regularity conditions on the coefficients $a_{ij}$, $b_k$, and $c$ in Theorem \ref{t:elliptic-2nd} can be relaxed slightly,
if the conclusion of the theorem is to be changed to $\tstA^{s}(H^1_\Gamma)\cap H^t(\Omega)\cap T^{-1}(\bar\tstA^s_{\infty;d}(\Leb[1]{2}(\Omega)))\subset\tstA^{s}(\rho_d)$ with $1<t<\frac32$.
\end{remark}

\begin{lemma}\label{l:elliptic-2nd-inverse}
For any $u\in H^1_\Gamma$, $P\in\tstP$ and $v\in S_P$, we have
\begin{equation}\label{e:elliptic-2nd-inverse}
\begin{split}
&E(Tu,\bar S^d_P)_{\Leb[1]{2}(\Omega)}^2 + \|u-v\|_{H^1(\Omega)}^2
\lesssim  \rho_d(u,v,P)^2 \\
&\quad + 
\left( E(a_{ij},\bar S^{d+2-m}_P)_{\Leb{\infty}(\Omega)}^2 
+ E(b_{i},\bar S^{d+1-m}_P)_{\Leb[1]{\infty}(\Omega)}^2 
+ E(c,\bar S^{d-m}_P)_{\Leb[2]{\infty}(\Omega)}^2 \right) 
|v|_{H^1(\Omega)}^2 .
\end{split}
\end{equation}
In particular, under the hypotheses of Theorem \ref{t:elliptic-2nd}, we have the inclusion $\tstA^{s}(\rho_d)\subset\tstA^{s}(H^1_\Gamma)\cap T^{-1}(\bar\tstA^s_{\infty;d}(\Leb[1]{2}(\Omega)))$.
\end{lemma}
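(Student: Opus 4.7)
The inequality \eqref{e:elliptic-2nd-inverse} is essentially a mirror of Lemma \ref{l:elliptic-2nd-direct}. The energy part is trivial: $\|u-v\|_{H^1}^2\le\rho_d(u,v,P)^2$ by the very definition of $\rho_d$. For the remaining piece, note that the $\Leb[1]{2}$-norm decomposes element by element with weights $h_\tau^2$, and $\bar S^d_P$ consists of discontinuous piecewise polynomials of degree $\le d$, so the best approximation on each $\tau$ is $\Pi_\tau Tu$, giving
$$E(Tu,\bar S^d_P)_{\Leb[1]{2}(\Omega)}^2 = \sum_{\tau\in P} h_\tau^{2}\|(1-\Pi_\tau)Tu\|_{\Leb{2}(\tau)}^2.$$
Splitting $Tu=Tv+(Tu-Tv)$ and using the triangle inequality, the $(Tu-Tv)$ contribution is precisely the element part of $\osc_d(u,v,P)^2$, hence bounded by $\rho_d(u,v,P)^2$.

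The core work is then to estimate $\sum_\tau h_\tau^2\|(1-\Pi_\tau)Tv\|_{\Leb{2}(\tau)}^2$. Expanding $Tv=-a_{ij}\partial_i\partial_j v+b_k\partial_k v+cv$ and treating each term separately, the key degree-count observation is that for any $\bar a_{ij}\in\bar S^{d+2-m}_P$, $\bar b_k\in\bar S^{d+1-m}_P$, $\bar c\in\bar S^{d-m}_P$, the products $\bar a_{ij}\partial_i\partial_j v$, $\bar b_k\partial_k v$, $\bar c v$ each lie element-wise in $\Pol_d$ (since $v|_\tau\in\Pol_m$), so $\Pi_\tau$ annihilates them. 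Choosing $\bar a_{ij},\bar b_k,\bar c$ to be element-wise $\Leb{\infty}$-best approximants of the coefficients, the estimates \eqref{e:elliptic-2nd-direct-aij}, \eqref{e:elliptic-2nd-direct-c}, and the analogous bound for $b_k\partial_kv$ go through verbatim: the Poincar\'e rewrite $cv=(c-\bar c)(v-\bar v)$ produces the extra $h_\tau$ factor absorbed into the $\Leb[2]{\infty}$ weight, and an inverse inequality converts $h_\tau\|\partial_i\partial_j v\|_{\Leb{2}(\tau)}$ into $\|\nabla v\|_{\Leb{2}(\tau)}$. Summing and collecting terms completes \eqref{e:elliptic-2nd-inverse}.

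For the ``in particular'' inclusion, the $H^1_\Gamma$ half is immediate from $\|u-v\|_{H^1}\le\rho_d(u,v,\cdot)$. To obtain $Tu\in\bar\tstA^s_{\infty;d}(\Leb[1]{2}(\Omega))$, fix $k\in\N$ and select partitions $P_u,P_{a},P_{b},P_{c}\in\tstP$, each of cardinality $\le 2^k N$, realizing the rate $2^{-ks}$ for $u$ (with a matching $v\in S_{P_u}$) and for the coefficients $a_{ij}, b_k, c$ in their respective adaptive approximation classes. Form the overlay $P=P_u\oplus P_a\oplus P_b\oplus P_c$; by \eqref{e:overlay} it satisfies $\#P\lesssim 2^k$, and since $v\in S_{P_u}\subset S_P$ and all best-approximation quantities only decrease under refinement, applying \eqref{e:elliptic-2nd-inverse} on $P$ with the uniform bound $|v|_{H^1}\le|u|_{H^1}+c\,2^{-ks}$ yields $E(Tu,\bar S^d_P)_{\Leb[1]{2}}\lesssim 2^{-ks}$, which is exactly the required decay.

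The main delicacy is the degree bookkeeping in the second paragraph: the polynomial orders $d+2-m,\,d+1-m,\,d-m$ of the coefficient approximation spaces are calibrated precisely so that products with the second, first and zeroth derivatives of $v\in\Pol_m$ land in $\Pol_d$, which is what allows $\Pi_\tau$ to eliminate the ``smooth'' parts. A secondary point is the degenerate case $d-m<0$, where $\bar S^{d-m}_P=\{0\}$ and one is effectively bounding the $c$-contribution by $\|c\|_{\Leb[2]{\infty}(\Omega)}\cdot|v|_{H^1}$; since $c\in\Leb{\infty}(\Omega)$ and $|\Omega|<\infty$ this is harmless, and consistent with the corresponding hypothesis of Theorem \ref{t:elliptic-2nd} being vacuous in that regime.
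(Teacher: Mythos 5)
Your proof is correct and follows essentially the same route as the paper: the triangle inequality $\|(1-\Pi_\tau)Tu\|_{\Leb{2}(\tau)}\le\|(1-\Pi_\tau)r_\tau\|_{\Leb{2}(\tau)}+\|(1-\Pi_\tau)Tv\|_{\Leb{2}(\tau)}$ reduces the element part to the estimates \eqref{e:elliptic-2nd-direct-aij}, \eqref{e:elliptic-2nd-direct-c}, \eqref{e:elliptic-2nd-direct-edge} already established in the proof of Lemma \ref{l:elliptic-2nd-direct}, and the ``in particular'' part is handled by overlaying near-optimal partitions for $u$ and the coefficients, with $\|v\|_{H^1}$ controlled through $\|u-v\|_{H^1}\le\rho_d$. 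Your explicit overlay $P=P_u\oplus P_a\oplus P_b\oplus P_c$ makes the monotonicity step clearer than the paper's terse ``without loss of generality,'' which is a welcome addition. One small correction to your closing side remark: when $d-m<0$, so that $\bar S^{d-m}_P=\{0\}$, the hypothesis $c\in\bar\tstA^s_{\infty;d-m}(\Leb[2]{\infty}(\Omega))$ is \emph{not} vacuous; it becomes the condition $\min_{\#P\le 2^kN}\|c\|_{\Leb[2]{\infty}(\Omega)}\lesssim 2^{-ks}$, which is automatic only for $s\le 2/n$ (since on a quasi-uniform partition $\|c\|_{\Leb[2]{\infty}}\sim h^2\|c\|_{\Leb{\infty}}$) and is a genuine restriction for larger $s$.
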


\begin{proof}
All the ingredients for establishing the estimate \eqref{e:elliptic-2nd-inverse} is already given in the proof of the preceding lemma.
Namely, we start with the bound
\begin{equation}
\begin{split}
(\osc_d(u,v,P))^2 
&\lesssim \sum_{\tau\in P} h_\tau^{2} \|(1-\Pi_\tau)Tu\|_{\Leb{2}(\tau)}^2 + \sum_{\tau\in P} h_\tau^{2} \|(1-\Pi_\tau)Tv\|_{\Leb{2}(\tau)}^2 \\
&\quad + \sum_{e\in E_P} h_e \|(1-\Pi_e) r_e\|_{\Leb{2}(e)}^2 ,
\end{split}
\end{equation}
and use the estimates \eqref{e:elliptic-2nd-direct-aij}, \eqref{e:elliptic-2nd-direct-c}, and \eqref{e:elliptic-2nd-direct-edge}, etc.,
on the last two terms to get \eqref{e:elliptic-2nd-inverse}.

As for the second assertion, let $\{P_k\}\subset\tstP$ and $\{v_k\}$ be two sequences with $v_k\in S_{P_k}$
such that $\#P_k\lesssim2^k$ and $\rho_d(u,v_k,P_k)\lesssim2^{-ks}$.
Then since $\|u-v_k\|_{H^1}\leq\rho_d(u,v_k,P_k)$, we have $\|v_k\|_{H^1}\lesssim\|u\|_{H^1}$.
Hence, by employing overlay of partitions, without loss of generality, we can suppose that the right hand side of \eqref{e:elliptic-2nd-inverse} with $P=P_k$ and $v=v_k$ is bounded by 
a constant multiple of $2^{-ks}$.
Looking at the left hand side then reveals that $Tu\in\tstA^s_{\infty;d}(\Leb[1]{2}(\Omega))$ and $u\in\tstA^s(H^1_\Gamma)$.
\end{proof}

%%%%%%%%%%%%%%%%%%%%%%%%%%%%%%%%%%%%%%%%%%%%%%%%%%%%%%%%%%%%%%%%%%%%%%%%%%%%%%
\section*{Acknowledgements}

This work was supported by NSERC Discovery Grants Program and FQRNT New University Researchers Start Up Program.

%%%%%%%%%%%%%%%%%%%%%%%%%%%%%%%%%%%%%%%%%%%%%%%%%%%%%%%%%%%%%%%%%%%%%%%%%%%%%%
%%%%%%%%%%%%%%%%%%%%%%%%%%%%%%%%%%%%%%%%%%%%%%%%%%%%%%%%%%%%%%%%%%%%%%%%%%%%%%

%%%%%%%%%%%%%%%%%%%%%%%%%%%%%%%%%%%%%%%%%%%%%%%%
%\bibliographystyle{dcu}
%\bibliographystyle{agsm}

%\bibliographystyle{nederlands}
%\bibliographystyle{kluwer}
%\bibliographystyle{jphysicsB}
%\bibliographystyle{jmr}

%\bibliographystyle{cell}
%\bibliographystyle{apalike}
\bibliographystyle{plainnat}
\bibliography{../bib/timur}

\begin{thebibliography}{20}
\providecommand{\natexlab}[1]{#1}
\providecommand{\url}[1]{\texttt{#1}}
\expandafter\ifx\csname urlstyle\endcsname\relax
  \providecommand{\doi}[1]{doi: #1}\else
  \providecommand{\doi}{doi: \begingroup \urlstyle{rm}\Url}\fi

\bibitem[Bergh and L{{\"o}}fstr{{\"o}}m(1976)]{BL76}
J{{\"o}}ran Bergh and J{{\"o}}rgen L{{\"o}}fstr{{\"o}}m.
\newblock \emph{Interpolation spaces. {A}n introduction}.
\newblock Springer-Verlag, Berlin, 1976.
\newblock Grundlehren der Mathematischen Wissenschaften, No. 223.

\bibitem[Binev et~al.(2002)Binev, Dahmen, DeVore, and Petrushev]{BDDP02}
Peter Binev, Wolfgang Dahmen, Ronald DeVore, and Pencho Petrushev.
\newblock Approximation classes for adaptive methods.
\newblock \emph{Serdica Math. J.}, 28\penalty0 (4):\penalty0 391--416, 2002.
\newblock Dedicated to the memory of Vassil Popov on the occasion of his 60th
  birthday.

\bibitem[Binev et~al.(2004)Binev, Dahmen, and DeVore]{BDD04}
Peter Binev, Wolfgang Dahmen, and Ron DeVore.
\newblock Adaptive finite element methods with convergence rates.
\newblock \emph{Numer. Math.}, 97\penalty0 (2):\penalty0 219--268, 2004.
\newblock URL \url{http://dx.doi.org/10.1007/s00211-003-0492-7}.

\bibitem[Birman and Solomyak(1967)]{BirSol67}
Mikhail~Shlemovich Birman and Mikhail~Zakharovich Solomyak.
\newblock Piecewise polynomial approximations of functions of classes
  {$W^\alpha_p$}.
\newblock \emph{Mat. Sb. (N.S.)}, 73\penalty0 (3):\penalty0 331--355, 1967.

\bibitem[Bonito and Nochetto(2010)]{BN10}
Andrea Bonito and Ricardo~H. Nochetto.
\newblock Quasi-optimal convergence rate of an adaptive discontinuous
  {G}alerkin method.
\newblock \emph{SIAM J. Numer. Anal.}, 48\penalty0 (2):\penalty0 734--771,
  2010.
\newblock ISSN 0036-1429.
\newblock \doi{10.1137/08072838X}.
\newblock URL \url{http://dx.doi.org/10.1137/08072838X}.

\bibitem[Cascon et~al.(2008)Cascon, Kreuzer, Nochetto, and Siebert]{CKNS08}
J.~Manuel Cascon, Christian Kreuzer, Ricardo~H. Nochetto, and Kunibert~G.
  Siebert.
\newblock Quasi-optimal convergence rate for an adaptive finite element method.
\newblock \emph{SIAM J. Numer. Anal.}, 46\penalty0 (5):\penalty0 2524--2550,
  2008.
\newblock URL \url{http://dx.doi.org/10.1137/07069047X}.

\bibitem[Cohen et~al.(2001)Cohen, Dahmen, and DeVore]{CDD01}
Albert Cohen, Wolfgang Dahmen, and Ronald DeVore.
\newblock Adaptive wavelet methods for elliptic operator equations: convergence
  rates.
\newblock \emph{Math. Comp.}, 70\penalty0 (233):\penalty0 27--75, 2001.
\newblock ISSN 0025-5718.
\newblock \doi{10.1090/S0025-5718-00-01252-7}.
\newblock URL \url{http://dx.doi.org/10.1090/S0025-5718-00-01252-7}.

\bibitem[Dekel and Leviatan(2004)]{DL04a}
Shai Dekel and Dany Leviatan.
\newblock Whitney estimates for convex domains with applications to
  multivariate piecewise polynomial approximation.
\newblock \emph{Found. Comput. Math.}, 4\penalty0 (4):\penalty0 345--368, 2004.
\newblock ISSN 1615-3375.
\newblock \doi{10.1007/s10208-004-0096-3}.
\newblock URL \url{http://dx.doi.org/10.1007/s10208-004-0096-3}.

\bibitem[DeVore and Lorentz(1993)]{DL93}
Ronald~A. DeVore and George~G. Lorentz.
\newblock \emph{Constructive approximation}, volume 303 of \emph{Grundlehren
  der Mathematischen Wissenschaften [Fundamental Principles of Mathematical
  Sciences]}.
\newblock Springer-Verlag, Berlin, 1993.
\newblock ISBN 3-540-50627-6.

\bibitem[D{\"o}rfler(1996)]{Dorf96}
Willy D{\"o}rfler.
\newblock A convergent adaptive algorithm for {P}oisson's equation.
\newblock \emph{SIAM J. Numer. Anal.}, 33\penalty0 (3):\penalty0 1106--1124,
  1996.
\newblock URL \url{http://dx.doi.org/10.1137/0733054}.

\bibitem[Feischl et~al.(2014)Feischl, F{\"u}hrer, and Praetorius]{FFP14}
Michael Feischl, Thomas F{\"u}hrer, and Dirk Praetorius.
\newblock Adaptive {FEM} with optimal convergence rates for a certain class of
  nonsymmetric and possibly nonlinear problems.
\newblock \emph{SIAM J. Numer. Anal.}, 52\penalty0 (2):\penalty0 601--625,
  2014.
\newblock ISSN 0036-1429.
\newblock \doi{10.1137/120897225}.
\newblock URL \url{http://dx.doi.org/10.1137/120897225}.

\bibitem[Gantumur et~al.(2007)Gantumur, Harbrecht, and Stevenson]{GHS07}
{Ts}ogtgerel Gantumur, Helmut Harbrecht, and Rob Stevenson.
\newblock An optimal adaptive wavelet method without coarsening of the
  iterands.
\newblock \emph{Math. Comp.}, 76\penalty0 (258):\penalty0 615--629, 2007.
\newblock ISSN 0025-5718.
\newblock \doi{10.1090/S0025-5718-06-01917-X}.
\newblock URL \url{http://dx.doi.org/10.1090/S0025-5718-06-01917-X}.

\bibitem[Gaspoz and Morin(2013)]{GM13}
Fernando~D. Gaspoz and Pedro Morin.
\newblock Approximation classes for adaptive higher order finite element
  approximation.
\newblock \emph{Math. Comp.}, 2013.
\newblock To appear.

\bibitem[Morin et~al.(2000)Morin, Nochetto, and Siebert]{MNS00}
Pedro Morin, Ricardo~H. Nochetto, and Kunibert~G. Siebert.
\newblock Data oscillation and convergence of adaptive {FEM}.
\newblock \emph{SIAM J. Numer. Anal.}, 38\penalty0 (2):\penalty0 466--488
  (electronic), 2000.
\newblock URL \url{http://dx.doi.org/10.1137/S0036142999360044}.

\bibitem[Nochetto et~al.(2009)Nochetto, Siebert, and Veeser]{NSV09}
Ricardo~H. Nochetto, Kunibert~G. Siebert, and Andreas Veeser.
\newblock Theory of adaptive finite element methods: an introduction.
\newblock In \emph{Multiscale, nonlinear and adaptive approximation}, pages
  409--542. Springer, Berlin, 2009.
\newblock URL \url{http://dx.doi.org/10.1007/978-3-642-03413-8_12}.

\bibitem[Oswald(1994)]{Osw94}
Peter Oswald.
\newblock \emph{Multilevel finite element approximation}.
\newblock Teubner Skripten zur Numerik. [Teubner Scripts on Numerical
  Mathematics]. B. G. Teubner, Stuttgart, 1994.
\newblock ISBN 3-519-02719-4.
\newblock Theory and applications.

\bibitem[Pietsch(1981)]{Piet81}
Albrecht Pietsch.
\newblock Approximation spaces.
\newblock \emph{J. Approx. Theory}, 32\penalty0 (2):\penalty0 115--134, 1981.
\newblock ISSN 0021-9045.
\newblock \doi{10.1016/0021-9045(81)90109-X}.
\newblock URL \url{http://dx.doi.org/10.1016/0021-9045(81)90109-X}.

\bibitem[Scott and Zhang(1990)]{SZ90}
L.~Ridgway Scott and Shangyou Zhang.
\newblock Finite element interpolation of nonsmooth functions satisfying
  boundary conditions.
\newblock \emph{Math. Comp.}, 54\penalty0 (190):\penalty0 483--493, 1990.
\newblock URL \url{http://dx.doi.org/10.2307/2008497}.

\bibitem[Stevenson(2007)]{Stev07}
Rob Stevenson.
\newblock Optimality of a standard adaptive finite element method.
\newblock \emph{Found. Comput. Math.}, 7\penalty0 (2):\penalty0 245--269, 2007.
\newblock URL \url{http://dx.doi.org/10.1007/s10208-005-0183-0}.

\bibitem[Stevenson(2008)]{Stev08}
Rob Stevenson.
\newblock The completion of locally refined simplicial partitions created by
  bisection.
\newblock \emph{Math. Comp.}, 77\penalty0 (261):\penalty0 227--241, 2008.
\newblock URL \url{http://dx.doi.org/10.1090/S0025-5718-07-01959-X}.

\end{thebibliography}

\end{document}